\setlist{  
  parsep=0pt,
}
\definecolor{wb}{RGB}{51,153,255}
\numberwithin{equation}{subsection}
\newcommand{\defeq}{\vcentcolon=}
\def\moverlay{\mathpalette\mov@rlay}
\def\mov@rlay#1#2{\leavevmode\vtop{%
   \baselineskip\z@skip \lineskiplimit-\maxdimen
   \ialign{\hfil$\m@th#1##$\hfil\cr#2\crcr}}}
\newcommand{\charfusion}[3][\mathord]{
    #1{\ifx#1\mathop\vphantom{#2}\fi
        \mathpalette\mov@rlay{#2\cr#3}
      }
    \ifx#1\mathop\expandafter\displaylimits\fi}
\newcommand{\cupdot}{\charfusion[\mathbin]{\cup}{\cdot}}
\newcommand{\bigcupdot}{\charfusion[\mathop]{\bigcup}{\cdot}}
\newtheoremstyle{definitions}
 	{\topsep}
	{\topsep}
	{}
	{}
	{\bfseries}
	{:}
	{.5em}
	{}
\newtheoremstyle{lemmata}
	{\topsep}
	{\topsep}
	{\itshape} 
	{}
	{\bfseries}
	{:}
	{.5em}
	{}
\theoremstyle{lemmata}
\newtheorem{Theorem}[subsection]{Theorem}
\newtheorem{Lemma}[subsection]{Lemma}
\newtheorem{Corollary}[subsection]{Corollary}
\newtheorem{Proposition}[subsection]{Proposition}
\newenvironment{manualtheorem}[1]{%
  \manualtheoreminner
}{\endmanualtheoreminner}
\theoremstyle{definitions}
\newtheorem{Definition}[subsection]{Definition}
\newtheorem{Remark}[subsection]{Remarks}
\title{Invertible functions on non-archimedean symmetric spaces}
\author{Ernst-Ulrich Gekeler}
\date{\today}
\begin{document}

\maketitle

\small{FR 6.1 Mathematik, Universität des Saarlandes, Postfach 15 11 50 \\
D-66041 Saarbrücken \\
mail: \href{mailto:gekeler@math.uni-sb.de}{gekeler@math.uni-sb.de} \\
phone: ++49 681 302 2494 \\
ORCID: 0000-0002-2299-6705 \\
MSC Primary 32P05; Secondary 32C30, 32C36, 11F23, 11F85}

\begin{abstract}
	Let $u$ be a nowhere vanishing holomorphic function on the Drinfeld space $\Omega^{r}$ of dimension $r-1$, where $r \geq 2$. The logarithm $\log_{q}\lvert u \rvert$ of its absolute value may be 
	regarded as an affine function on the attached Bruhat-Tits building $\mathcal{BT}^{r}$. Generalizing a construction of van der Put in case $r=2$, we relate the group 
	$\mathcal{O}(\Omega^{r})^{*}$ of such $u$ with the group $\mathbf{H}(\mathcal{BT}^{r}, \mathds{Z})$ of integer-valued harmonic 1-cochains on $\mathcal{BT}^{r}$. This also gives rise to a 
	natural $\mathds{Z}$-structure on the first ($\ell$-adic or de Rham) cohomology of $\Omega^{r}$.
\end{abstract}

\setcounter{section}{-1}

\section{Introduction}

The non-archimedean symmetric spaces $\Omega = \Omega^{r}$ introduced by Drinfeld \cite{Drinfeld74} have shown great importance in the theories of modular and automorphic forms and of Shimura varieties, in the analytic uniformization
of algebraic varieties, in the representation theory of $\mathrm{GL}(r,K)$, in the local Langlands correspondence, and in several other topics of the arithmetic of non-archimedean local fields $K$. An incomplete list of a few references 
is \cite{ManinDrinfeld73}, \cite{Mustafin78}, \cite{GerritzenVanDerPut80}, \cite{SchneiderStuhler91}, \cite{Laumon96}, \cite{DeShalit01}.

For a complete non-archimedean local field $K$ with finite residue class field $\mathds{F}$ and completed algebraic closure $C$, the space $\Omega$ is defined as the complement of the $K$-rational hyperplanes in $\mathds{P}^{r-1}(C)$.
It carries a natural structure as a rigid-analytic space defined over $K$, and is supplied with an action of the group $\mathrm{PGL}(r,K)$. In contrast with the case of real symmetric spaces, it fails to be simply connected (in the étale topology, see \cite{FresnelVanDerPut04}, pp. 160/161), but has
a rich cohomological structure. Its cohomology (for cohomology theories satisfying some natural axioms) has been calculated by Schneider and Stuhler \cite{SchneiderStuhler91}, see also \cite{DeShalit01} and \cite{IovitaSpiess01}.

Suppose for the moment that $\boldsymbol{r=2}$. In this case, $\Omega = \Omega^{2}$ has dimension 1, and a coarse combinatorial picture is provided by the Bruhat-Tits tree $\mathcal{T}$ of $\mathrm{PGL}(2,K)$, a $(q+1)$-regular tree, where
$q = \#(\mathds{F})$ is the residue class cardinality of $K$. A map $\varphi$ from the set $\mathbf{A}(\mathcal{T})$ of oriented 1-simplices (\enquote{arrows}) of $\mathcal{T}$ to $\mathds{Z}$ that satisfies
\begin{enumerate}[label=(\Alph*)]
	\item $\varphi(e) + \varphi(\bar{e}) = 0$ for each $e \in \mathbf{A}(\mathcal{T})$ with inverse $\bar{e}$, and
	\item $\sum \varphi(e) = 0$ for each vertex $v$ of $\mathcal{T}$, where $e$ runs through the arrows emanating from $v$,
\end{enumerate}
is called a $(\mathds{Z}$-valued) \textbf{harmonic cochain} on $\mathcal{T}$. The group $\mathbf{H}(\mathcal{T}, \mathds{Z})$ of all such yields upon tensoring with $\mathds{Z}_{\ell}$ ($\ell$ a prime coprime with $q$) 
the first étale cohomology group $H_{\acute{e}t}^{1}(\Omega^{2}, \mathds{Z}_{\ell})$ of $\Omega^{2}$ (\cite{Drinfeld74} Proposition 10.2). In 1981 Marius van der Put (\cite{VanDerPut8182}, see also \cite{FresnelVanDerPut81} I.8.9) established a short exact sequence
\begin{equation} \label{Eq.van-der-Put-Short-exact-sequence-of-PGL-2-K-modules}
	\begin{tikzcd}
		1 \arrow[r] & C^{*} \arrow[r] & \mathcal{O}(\Omega^{2})^{*} \ar[r, "P"] & \mathbf{H}(\mathcal{T}, \mathds{Z}) \ar[r] & 0 \tag{0.1}
	\end{tikzcd}
\end{equation}
of $\mathrm{PGL}(2,K)$-modules, where $\mathcal{O}(\Omega^{2})$ is the $C$-algebra of holomorphic functions on $\Omega^{2}$ with multiplicative group $\mathcal{O}(\Omega^{2})^{*}$. The van der Put transform $P(u)$ of an invertible function
$u$ is a substitute for the logarithmic derivative $u'/u$, and \eqref{Eq.van-der-Put-Short-exact-sequence-of-PGL-2-K-modules} provides the starting point for a study of the \enquote{Riemann surface} $\Gamma \setminus \Omega^{2}$, where
$\Gamma \subset \mathrm{PGL}(2,K)$ is a discrete subgroup (\cite{GerritzenVanDerPut80}, \cite{Gekeler96}).

It is the aim of the present paper to develop a higher-rank (i.e., $r>2$) analogue of \eqref{Eq.van-der-Put-Short-exact-sequence-of-PGL-2-K-modules}. In \cite{GekelerTA} it was shown that the absolute value $\lvert u \rvert$ of $u \in \mathcal{O}(\Omega^{r})^{*}$
factors over the building map
\[
	\lambda \colon \Omega^{r} \longrightarrow \mathcal{BT}^{r}
\]
and that its logarithm $\log_{q} \lvert u \rvert$ defines an affine map on $\mathcal{BT}^{r}(\mathds{Q})$. Here $\mathcal{BT}^{r}$ is the Bruhat-Tits building of $\mathrm{PGL}(r,K)$ (the higher-dimensional analogue of $\mathcal{BT}^{2} = \mathcal{T}$)
and $\mathcal{BT}^{r}(\mathds{Q})$ is the set of $\mathds{Q}$-points of its realization $\mathcal{BT}^{r}(\mathds{R})$. This makes it feasible that $u \mapsto \log_{q} \lvert u \rvert$ gives rise to a construction of $P$ generalizing van der Put's in the
case $r=2$. The transform $P(u)$ of $u$ will be a $\mathds{Z}$-valued function on the set of arrows $\mathbf{A}(\mathcal{BT}^{r})$ of $\mathcal{BT}^{r}$ subject to (obvious generalizations of) the conditions (A) and (B) above.

Our first result, Proposition \ref{Proposition.Property-C-holds-for-invertible-holomorphic-functions-on-Omega}, is that $P(u)$ satisfies one more relation (condition (C) in Corollary \ref{Corollary.Condition-C}) not visible if $r=2$. 
We then define $\mathbf{H}(\mathcal{BT}^{r}, \mathds{Z})$ as the group of those $\varphi \colon \mathbf{A}(\mathcal{BT}^{r}) \to \mathds{Z}$ which satisfy (A), (B) and (C).

The principal result of the present paper is the fact that the set of these relations is complete:

\begin{manualtheorem}{3.11}
	The map $P \colon \mathcal{O}(\Omega^{r})^{*} \to \mathbf{H}(\mathcal{BT}^{r}, \mathds{Z})$ is surjective, and the van der Put sequence
	\begin{equation} \label{Eq.van-der-Put-Short-exact-sequence-for-general-r}
		\begin{tikzcd}
			1 \arrow[r] & C^{*} \arrow[r] & \mathcal{O}(\Omega^{r})^{*} \ar[r] & \mathbf{H}(\mathcal{BT}^{r}, \mathds{Z}) \ar[r] & 0 \tag{0.2}
		\end{tikzcd}
	\end{equation}
	is an exact sequence of $\mathrm{PGL}(r,K)$-modules.
\end{manualtheorem}

The proof requires the construction of certain functions $u = f_{H,H',n}$ whose transforms $P(u)$ have a prescribed behavior on the finite subcomplex $\mathcal{BT}^{r}(n)$ of $\mathcal{BT}^{r}$, and a crucial technical result 
(Proposition \ref{Proposition.Crucial-technical-tool-for-main-theorem}), which solely refers to the geometry of $\mathcal{BT}^{r}$.

Still, $\mathbf{H}(\mathcal{BT}^{r}, \mathds{Z})$ is a torsion-free abelian group of complicated appearance. However, as a further consequence of Proposition \ref{Proposition.Crucial-technical-tool-for-main-theorem}, we are able to describe 
it in Theorem \ref{Theorem.Abstraction-to-arbitrary-abelian-groups}
\begin{itemize}
	\item either as $\mathbf{H}(\mathcal{T}_{v_{0}}, \mathds{Z})$, where $\mathcal{T}_{v_{0}}$ is a subcomplex of dimension 1 of $\mathcal{BT}^{r}$ (in fact, a tree, which for $r=2$ agrees with the Bruhat-Tits tree $\mathcal{T} = \mathcal{BT}^{2}$),
	and where only conditions (A) and (B) are involved,
	\item or as the group $\mathbf{D}^{0}(\mathds{P}(V), \mathds{Z})$ of $\mathds{Z}$-valued distributions of total mass 0 on the projective space $\mathds{P}(V)$, or by duality, on the 
	compact space $\mathds{P}(V^{\wedge})$ of hyperplanes of the $K$-vector space $V=K^{r}$.
\end{itemize}

As the corresponding group $\mathbf{D}^{0}(\mathds{P}(V^{\wedge}), A)$ with coefficients in some ring $A$ depending on the cohomology theory used (e.g., $A = \mathds{Z}_{\ell}$ for étale cohomology, or,
in characteristic zero, $A=K$ for de Rham cohomology)
has been shown to agree with the first cohomology $H^{1}(\Omega^{r}, A)$ (\cite{SchneiderStuhler91}, Section 3, Theorem 1), we get in some cases a natural integral structure on $H^{1}(\Omega^{r}, A)$ along with a concrete arithmetic interpretation.

\textbf{Acknowledgement:} The Author is grateful to the referees for their accurate reading and helpful comments. These resulted in a number of improvements and corrections and enhanced the paper's readability.
He also wishes to thank Lennart Gehrmann for pointing out an error in Proposition \ref{Proposition.Canonical-maps} of an earlier version of the paper.

\section{Background}

\subsection{} Throughout, $K$ denotes a non-archimedean local field with ring $O$ of integers, a fixed uniformizer $\pi$, and finite residue class field $O/(\pi) = \mathds{F} = \mathds{F}_{q}$ of cardinality $q$. Hence $K$ is a finite extension of either
a $p$-adic field $\mathds{Q}_{p}$ or of a Laurent series field $\mathds{F}_{p} ((X))$. We normalize its absolute value $\lvert \cdot \rvert$ by $\lvert \pi \rvert = q^{-1}$, and let $C = \widehat{\bar{K}}$ be its completed algebraic closure with respect to the unique extension of $\lvert \cdot \rvert$ to $\bar{K}$. The ring of integers of $C$ and its maximal ideal are denoted by $O_{C}$ and $\mathfrak{m}_{C}$. Note that the 
residue class field $O_{C}/\mathfrak{m}_{C}$ is an algebraic closure $\bar{\mathds{F}}$ of $\mathds{F}$. Further, $\log \colon C^{*} \to \mathds{Q}$ is the map $z \mapsto \log_{q} \lvert z \rvert$.

\subsection{} Given a natural number $r \geq 2$, the Drinfeld symmetric space $\Omega = \Omega^{r}$ of dimension $r-1$ is the complement $\Omega = \mathds{P}^{r-1} \setminus \bigcup H$ of the $K$-rational hyperplanes $H$
in projective space $\mathds{P}^{r-1}$. Hence the set of $C$-valued points of $\Omega$ (for which we briefly write $\Omega$) is
\begin{align*}
	\Omega = \{ (\omega_{1} : \ldots : \omega_{r}) \in \mathds{P}^{r-1}(C) \mid \text{the $\omega_{i}$ are $K$-linearly independent}\}.
\end{align*}
If not indicated otherwise, we always suppose that projective coordinates \mbox{$(\omega_{1} : \ldots : \omega_{r})$} are \textbf{unimodular}, that is $\max_{i} \lvert \omega_{i} \rvert = 1$. The set $\Omega$ carries a natural structure 
as a rigid-analytic space defined over $K$ (see \cite{Drinfeld74}, \cite{DeligneHusemoller87}, \cite{SchneiderStuhler91}); in fact, it is an admissible open subspace of $\mathds{P}^{r-1}$, and even a Stein domain (\cite{SchneiderStuhler91}, Section 1, Proposition 14; see \cite{Kiehl67} for the notion of non-archimedean Stein domain).

\subsection{} Let $G$ be the group scheme $\mathrm{GL}(r)$ with center $Z$; hence $G(K) = \mathrm{GL}(r,K)$, $Z(K) \cong K^{*}$, etc. The Bruhat-Tits building \cite{BruhatTits72} $\mathcal{BT} = \mathcal{BT}^{r}$ of 
$G(K)/Z(K) = \mathrm{PGL}(r,K)$ is a contractible simplicial complex with set of vertices
\begin{equation}
	\mathbf{V}(\mathcal{BT}) = \{ [L] \mid L \text{ an $O$-lattice in $V$}\},
\end{equation}
where $L$ runs through the set of $O$-lattices in the $K$-vector space $V = K^{r}$ and $[L]$ is the similarity class of $L$. (An \textbf{$\boldsymbol{O}$-lattice} is a free $O$-submodule of rank $r$ of $V$, two such, $L$ and $L'$, are \textbf{similar}
if there exists $0 \neq c \in K$ such that $L' = cL$.) The classes $[L_{0}], \dots, [L_{s}]$ form an $s$-simplex if and only if they are represented by lattices $L_{i}$ such that
\begin{equation}
	L_{0} \supsetneq L_{1} \supsetneq \dots \supsetneq L_{s} \supsetneq \pi L_{0}.
\end{equation}
The \textbf{combinatorial distance} $d(v,v')$ of two vertices $v,v' \in \mathbf{V}(\mathcal{BT})$ is the length of a shortest path connecting them in the 1-skeleton of $\mathcal{BT}$. It is easily verified that 
\begin{equation}
	d(v,v') = \min \left\{ n \, \middle\vert \, \begin{array}{l} \text{$\exists$ representatives $L,L'$ for $v,v'$} \\ \text{such that $L \supset L' \supset \pi^{n} L$} \end{array} \right\}.
\end{equation}
The \textbf{star} $\mathrm{st}(v)$ of $v \in \mathbf{V}(\mathcal{BT})$ will always denote the full subcomplex of $\mathcal{BT}$ with set of vertices
\begin{equation} \label{Eq.Ster-of-vertices}
	\mathbf{V}(\mathrm{st}(v)) = \{ w \in \mathbf{V}(\mathcal{BT}) \mid d(v,w) \leq 1 \}.
\end{equation}
We regard $V$ as a space of row vectors, on which $G(K)$ acts as a matrix group from the right. Hence $G(K)$ acts also from the right on $\mathcal{BT}$. If the syntax requires a left action, we shift this action to the left by the usual
formula $\gamma x \vcentcolon= x\gamma^{-1}$.

\subsection{} The relationship between $\Omega$ and $\mathcal{BT}$ is as follows: By the Goldman-Iwahori theorem \cite{GoldmanIwahori63}, the realization $\mathcal{BT}(\mathds{R})$ of $\mathcal{BT}$ is in a natural one-to-one 
correspondence with the set of similarity classes of real-valued non-archimedean norms on $V$, where a vertex $v = [L] \in \mathbf{V}(\mathcal{BT}) = \mathcal{BT}(\mathds{Z})$ corresponds to the class of a norm with unit ball $L \subset V$. 
Now the \textbf{building map}
\begin{equation} \label{Equation.Building-map}
	\begin{split}		
							\lambda \colon \Omega 							&\longrightarrow \mathcal{BT}(\mathds{R}) \\
		\boldsymbol{\omega} = (\omega_{1}:\ldots:\omega_{r}) 	&\longmapsto [\nu_{\boldsymbol{\omega}}] 
	\end{split}
\end{equation} 
is well-defined, where the norm $\nu_{\boldsymbol{\omega}}$ maps $\mathbf{x} = (x_{1}, \dots, x_{r}) \in V$ to 
\[
	\nu_{\boldsymbol{\omega}}(\mathbf{x}) = \bigg\lvert \sum_{1 \leq i \leq r} x_{i} \omega_{i} \bigg\rvert,
\] 
and $[\nu_{\boldsymbol{\omega}}]$ is its 
similarity class. Since the value group is $\lvert C^{*} \rvert = q^{\mathds{Q}}$, $\lambda$ maps to $\mathcal{BT}(\mathds{Q})$, and is in fact onto $\mathcal{BT}(\mathds{Q})$, the set of points of $\mathcal{BT}(\mathds{R})$ with 
rational barycentric coordinates. $G(K)$ acts from the left on the set of norms via
\begin{equation}
	\gamma \nu(\mathbf{x}) \vcentcolon= \nu(\mathbf{x}\gamma)
\end{equation}
for $\mathbf{x} \in V$, a norm $\nu$, and $\gamma \in G(K)$; the reader may verify that $\lambda$ is $G(K)$-equivariant, where the action on $\Omega$ is the standard one through left matrix multiplication.
The pre-images under $\lambda$ of simplices of $\mathcal{BT}$ yield an admissible covering of $\Omega$, see e.g. \cite{DeShalit01} (6.2) and (6.3). We therefore consider $\mathcal{BT}$ as a combinatorial picture of $\Omega$.

We cite the following results from \cite{Gekeler17} and \cite{GekelerTA}.

\begin{Theorem}[\cite{GekelerTA} Theorem 2.4] \label{Theorem.Image-of-invertible-holomorphic-function}
	Let $u$ be an invertible holomorphic function on $\Omega$. Then $\lvert u(\boldsymbol{\omega}) \rvert$ depends only on the image $\lambda(\boldsymbol{\omega})$ of $\boldsymbol{\omega} \in \Omega$ in $\mathcal{BT}(\mathds{Q})$.
\end{Theorem}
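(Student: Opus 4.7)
My plan is to prove the equivalent assertion that $\lvert u \rvert$ is constant on each fibre $\lambda^{-1}(p)$, $p \in \mathcal{BT}^{r}(\mathds{Q})$, by combining a concrete observation about linear forms with a structure result for units on Drinfeld affinoids.

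First, for a $K$-rational linear form $\ell(\mathbf{x}) = \sum_{i} a_{i} x_{i}$ and a unimodular representative $\boldsymbol{\omega}$, one has $\lvert \ell(\boldsymbol{\omega}) \rvert = \nu_{\boldsymbol{\omega}}(\mathbf{a})$ by the very definition of $\nu_{\boldsymbol{\omega}}$. If $\boldsymbol{\omega}$ and $\boldsymbol{\omega}'$ are both unimodular with $\lambda(\boldsymbol{\omega}) = \lambda(\boldsymbol{\omega}')$, the normalisation $\max_{i} \lvert \omega_{i} \rvert = 1$ pins down the similarity-class representative uniquely, so $\nu_{\boldsymbol{\omega}} = \nu_{\boldsymbol{\omega}'}$ as functions on $V$. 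Consequently $\lvert \ell(\boldsymbol{\omega}) \rvert = \lvert \ell(\boldsymbol{\omega}') \rvert$, and the same equality holds for any monomial $c \cdot \prod_{i} (\ell_{i}/\ell_{0})^{n_{i}}$ in ratios of $K$-rational linear forms.

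Next, I would cover $\Omega^{r}$ admissibly by affinoids $U_{n} = \lambda^{-1}(\lvert \mathcal{BT}^{r}(n) \rvert)$, one for each radius $n$ around a chosen base vertex. The restriction of $u$ to $U_{n}$ is a unit in the affinoid algebra, and the Drinfeld--Mustafin formal model of $U_{n}$ has special fibre a simplicial union of copies of $\mathds{P}^{r-1}_{\mathds{F}}$ glued along $\mathds{F}$-rational hyperplanes. A Picard-group and divisor-class computation on this reduced scheme shows that its units are, up to $\mathds{F}^{*}$, monomials in the reductions of $K$-rational linear forms; lifting such a factorisation back to characteristic zero yields
\begin{equation*}
u = c \cdot \prod_{i} (\ell_{i}/\ell_{0})^{n_{i}} \cdot (1+g) \quad \text{on } U_{n},
\end{equation*}
with $c \in C^{*}$, the $\ell_{i}$ being $K$-linear, $n_{i} \in \mathds{Z}$, and $g$ of spectral norm strictly less than $1$ on $U_{n}$.

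Combining the two inputs, $\lvert u(\boldsymbol{\omega}) \rvert = \lvert c \rvert \cdot \prod_{i} \lvert \ell_{i}(\boldsymbol{\omega})/\ell_{0}(\boldsymbol{\omega}) \rvert^{n_{i}}$ on $U_{n}$, which depends only on $\lambda(\boldsymbol{\omega})$ by the first step; and since any pair $\boldsymbol{\omega}, \boldsymbol{\omega}'$ in a common $\lambda$-fibre lies in a common $U_{n}$ for $n$ sufficiently large, the desired equality follows on all of $\Omega^{r}$. The principal obstacle is the structural step: identifying the special fibre of $U_{n}$ as the explicit glueing of $\mathds{P}^{r-1}_{\mathds{F}}$'s and verifying that its unit group admits no ``exotic'' elements beyond monomials in $K$-rational linear forms. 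Everything else is essentially formal once this input is secured.
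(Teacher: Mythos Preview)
The present paper does not prove this theorem; it is quoted from \cite{GekelerTA}, Theorem~2.4. The only indication of the argument here is the remark (item~(ii) in the Remarks of Section~1) that the result is \emph{local}: it suffices, for each fixed $x\in\mathcal{BT}(\mathds{Q})$, to show that a holomorphic unit on the single affinoid $\lambda^{-1}(x)$ has constant absolute value. For a vertex $v$ the canonical reduction of $\lambda^{-1}(v)$ is the irreducible variety $\Omega_{\mathds{F}}=\mathds{P}^{r-1}/\mathds{F}\setminus\bigcup\overline{H}$, so the supremum norm on $\mathcal{O}(\lambda^{-1}(v))$ is multiplicative, and then $\lVert u\rVert\cdot\lVert u^{-1}\rVert=1$ forces $\lvert u(\boldsymbol{\omega})\rvert\equiv\lVert u\rVert$ for all $\boldsymbol{\omega}\in\lambda^{-1}(v)$. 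The fibres over non-vertex rational points are treated by the same mechanism in the cited reference.

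Your route is genuinely different: you pass to the large affinoid $U_{n}=\Omega(n)$ and seek a global factorisation of every unit as a monomial in $K$-linear forms times $(1+g)$ with $\lVert g\rVert_{U_{n}}<1$. Your first step---that $\lvert\ell(\boldsymbol{\omega})\rvert=\nu_{\boldsymbol{\omega}}(\mathbf{a})$ depends only on $\lambda(\boldsymbol{\omega})$, with the unimodular normalisation pinning down the representative of the similarity class---is correct and clean. The structural step, however, is heavier than the theorem itself. For $n\geq 1$ the reduction of $U_{n}$ is no longer irreducible, so its unit group is governed by a compatibility system across the components, and establishing that every such compatible family lifts to a \emph{single} global monomial in $K$-linear forms is close to asserting that the $\ell_{H,H'}$ (together with the $f_{H,H',m}=\ell_{H''}/\ell_{H'}$) already exhaust $\mathcal{O}(U_{n})^{*}$ modulo constants and principal units---essentially the machinery the paper assembles later for Theorem~3.10. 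Your strategy is not wrong, but it trades a short fibrewise argument for a structure theorem of comparable depth; the ``principal obstacle'' you flag is not a side lemma but the whole content.
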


\subsubsection{} \label{subsubsec-Spectral-norm} We thus define the \textbf{spectral norm} $\lVert u \rVert_{x}$ as the common absolute value $\lvert u(\boldsymbol{\omega}) \rvert$ for all $\omega \in \lambda^{-1}(x)$, 
where $x \in \mathcal{BT}(\mathds{Q})$.

\begin{Theorem}[\cite{GekelerTA} Theorem 2.6] \label{Theorem.Invertible-holomorphic-functions-are-affine-on-certain-sets}
	Let $u$ be an invertible holomorphic function on $\Omega$. Then $\log u = \log_{q} \lvert u \rvert$ regarded as a function on $\mathcal{BT}(\mathds{Q})$ is affine, that is, interpolates linearly in simplices.
\end{Theorem}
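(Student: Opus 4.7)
By Theorem~\ref{Theorem.Image-of-invertible-holomorphic-function}, $\omega \mapsto \log_{q}|u(\omega)|$ is constant on fibres of $\lambda$ and so descends to a well-defined function $\varphi \colon \mathcal{BT}(\mathds{Q}) \to \mathds{R}$. The goal is to show that $\varphi$ extends to an affine function on each simplex of $\mathcal{BT}(\mathds{R})$. Since affinity on a convex set is equivalent to affinity on every line segment, and any two $\mathds{Q}$-points of a simplex lie in a common apartment, the task reduces to the following: given a basis $e_{1}, \ldots, e_{r}$ of $V$ defining an apartment, and two rational points $x, y$ of a common simplex of that apartment, I would show that $\varphi$ is affine on $[x, y] \cap \mathcal{BT}(\mathds{Q})$.

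The plan is to realize the segment $[x, y]$ as the $\lambda$-image of a one-parameter holomorphic family in $\Omega$ and then invoke the one-variable theory. After normalization, write $\log_{q}\nu_{x}(e_{j}) = \alpha_{j}$ and $\log_{q}\nu_{y}(e_{j}) = \beta_{j}$, pick $N \in \mathds{N}$ with $n_{j} := N(\beta_{j} - \alpha_{j}) \in \mathds{Z}$ for all $j$, choose $\omega^{(0)} \in \lambda^{-1}(x)$, and set
\[
\omega(z) := (z^{n_{1}}\omega^{(0)}_{1} : \cdots : z^{n_{r}}\omega^{(0)}_{r}), \qquad z \in C^{*}.
\]
Setting $t := N\log_{q}|z|$, one checks from $|\omega(z)_{j}| = |z|^{n_{j}}|\omega^{(0)}_{j}|$ that $\lambda(\omega(z)) = (1-t)x + ty$ in the apartment for $t \in [0,1]$. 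On an open annulus $A \subset C^{*}$ around $|z| = 1$ on which $\omega(z) \in \Omega$, the composite $f := u \circ \omega$ is a holomorphic, nowhere-vanishing function of one variable; the standard Newton-polygon argument then forces $\log_{q}|f(z)| = c + m\log_{q}|z|$ for some $c \in \mathds{R}$ and $m \in \mathds{Z}$, because a nowhere-vanishing holomorphic function on an annulus has a Newton polygon with a single slope. Translating back gives $\varphi((1-t)x + ty) = c + (m/N)t$, affine in $t$, as required.

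The main obstacle will be exhibiting a genuine open annulus $A$ inside $\{z \in C^{*} : \omega(z) \in \Omega\}$. My approach would be to use that $\Omega$ is open in $\mathds{P}^{r-1}$, that $\omega^{(0)} \in \Omega$ at $z = 1$, and that for each fixed $\mathbf{a} \in K^{r}\setminus\{0\}$ the equation $\sum_{j} a_{j} z^{n_{j}}\omega^{(0)}_{j} = 0$ cuts out a discrete zero set in $C^{*}$; controlling the union of these bad sets as $\mathbf{a}$ varies — so as to conclude that a genuine open annulus remains — is where the real technical work lies, and where a careful choice of the lift $\omega^{(0)}$ and of $N$ may be needed.
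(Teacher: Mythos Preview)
This statement is not proved in the present paper; it is cited from \cite{GekelerTA}, Theorem~2.6, so there is no in-paper proof to compare against. I will therefore comment on your proposal directly.

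Your reduction and one-variable strategy are sound, but two points need sharpening beyond the difficulty you flag. First, you need not only $\omega(z)\in\Omega$ but also $\lambda(\omega(z))=(1-t)x+ty$; knowing $|\omega(z)_{j}|=|z|^{n_{j}}|\omega^{(0)}_{j}|$ is not enough, since $\lambda(\omega(z))$ lies in the given apartment only if the $z^{n_{j}}\omega^{(0)}_{j}$ are $K$-\emph{orthogonal}, not merely $K$-independent, and scaling the entries of an orthogonal tuple by distinct constants from $C^{*}$ does not in general preserve orthogonality. Second, an annulus literally containing the circle $|z|=1$ will typically meet the bad locus (already for $r=2$ with $(n_{1},n_{2})=(0,1)$, the bad set is $(\omega^{(0)}_{2}/\omega^{(0)}_{1})\,K$, which hits every circle of radius in $q^{\mathds{Z}}$). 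The fix for both issues is to work on the \emph{open} annulus $\{1<|z|<q^{1/N}\}$, corresponding to $t\in(0,1)$: because $x$ and $y$ lie in a common closed simplex, one may normalize so that $0\le\alpha_{1}\le\cdots\le\alpha_{r}\le 1$ and $0\le\beta_{1}\le\cdots\le\beta_{r}\le 1$ with the same ordering; then for $s\in(0,1)$ the values $\gamma_{j}(s):=(1-s)\alpha_{j}+s\beta_{j}\in[0,1]$ satisfy $\gamma_{j}(s)\equiv\gamma_{k}(s)\pmod{\mathds{Z}}$ only when $n_{j}=n_{k}$, and from this the $K$-orthogonality of the $\omega(z)_{j}$ follows for every $z$ in that open annulus (indices with equal $n_{j}$ are rescaled by the same factor; indices with different $n_{j}$ land in distinct cosets of $q^{\mathds{Z}}$). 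Your Newton-polygon argument then gives affinity of $\varphi$ on the open segment $(x,y)$, and continuity of $u$ at $z=1$ (where $\omega(1)=\omega^{(0)}\in\Omega$) together with the analogous argument at the other end recovers the endpoint values. With these adjustments, your outline becomes a complete proof.
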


\subsection{} Let $\mathbf{A}(\mathcal{BT})$ be the set of \textbf{arrows}, i.e., of oriented 1-simplices of $\mathcal{BT}$. For each arrow $e = (v,v') = ([L], [L'])$ we write
\begin{multline}
	o(e) = \text{origin of $e$} \vcentcolon = v, \quad t(e) = \text{terminus of $e$} \vcentcolon = v', \\ \text{and } \mathrm{type}(e) \vcentcolon= \dim_{\mathds{F}}(L'/\pi L),
\end{multline}
where $L,L'$ are representatives with $L \supset L' \supset \pi L$. Then $1 \leq \mathrm{type}(e) \leq r-1$ and $\mathrm{type}(e) + \mathrm{type}(\bar{e}) = r$, where $\bar{e} = (v',v)$ is $e$ with reverse orientation.
We let
\begin{equation}
	\mathbf{A}_{v} = \bigcupdot_{1 \leq t \leq r-1} \mathbf{A}_{v,t}
\end{equation}
be the arrows $e$ with $o(e) = v$, grouped according to their types $t$. For an invertible function $u$ on $\Omega$ and an arrow $e = (v,w)$, define the \textbf{van der Put value} $P(u)(e)$ of $u$ on $e$ as
\begin{equation}
	P(u)(e) = \log_{q} \lVert u \rVert_{w} - \log_{q} \lVert u \rVert_{v}
\end{equation}
with the spectral norm of \ref{subsubsec-Spectral-norm}.

\begin{Proposition}[\cite{Gekeler17}, Proposition 2.9] \label{Proposition.Properties-of-the-van-der-Put-transform}
	The \textbf{van der Put transform}
	\begin{align*}
		P(u) \colon \mathbf{A}(\mathcal{BT}) 	&\longrightarrow \mathds{Q} \\
															e		&\longmapsto P(u)(e)
	\end{align*}
	of $u$ has in fact values in $\mathds{Z}$ and satisfies
	\begin{equation} \label{Eq.Property-van-der-Put-transform}
		\sum_{e \in \mathbf{A}_{v,1}} P(u)(e) = 0
	\end{equation}
	for all $v \in \mathbf{V}(\mathcal{BT})$. Here the sum is over the arrows $e$ with $o(e) = v$ and $\mathrm{type}(e) = 1$.
\end{Proposition}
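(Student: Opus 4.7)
The plan is to work locally at a single vertex of $\mathcal{BT}$ and exploit the reduction-theoretic interpretation of $u$ there. By $\mathrm{G}(K)$-equivariance of $\lambda$, it suffices to verify both assertions at the standard vertex $v_{0} = [O^{r}]$.

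First I would recall that the preimage $\lambda^{-1}(\mathrm{st}(v_{0}))$ contains a natural affinoid subdomain $\Omega(v_{0})$ whose canonical reduction $\widetilde{\Omega}(v_{0})$ is the complement $\mathds{P}^{r-1}_{\mathds{F}} \setminus \bigcup_{H} H$ of the union of all $\mathds{F}$-rational hyperplanes in $\mathds{P}^{r-1}_{\mathds{F}}$, a description due to the work in the Drinfeld setup and already implicit in the earlier Sections. By Theorem \ref{Theorem.Image-of-invertible-holomorphic-function} the spectral norm $\lVert u \rVert_{v_{0}}$ is well-defined and attained on $\Omega(v_{0})$.

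Second, for integrality, I would choose $c \in C^{*}$ with $\lvert c \rvert = \lVert u \rVert_{v_{0}}$. Then both $c^{-1}u$ and $c\,u^{-1}$ have spectral norm $1$ on $\Omega(v_{0})$, so $c^{-1}u$ reduces to a unit $\widetilde{u}$ in the affine ring of $\widetilde{\Omega}(v_{0})$. Extending $\widetilde{u}$ to a rational function $\widetilde{u}_{\mathrm{rat}}$ on $\mathds{P}^{r-1}_{\mathds{F}}$, its divisor is supported on the $\mathds{F}$-rational hyperplanes. An adjacent vertex $w = [L']$ of type $1$ (with $L' \subsetneq L = O^{r}$ of index $q$) corresponds to the hyperplane $H = \overline{L'} \subset L/\pi L \cong \mathds{F}^{r}$, and a direct unravelling of the Goldman--Iwahori norms attached to $v_{0}$ and $w$ identifies
\[
	P(u)(e) = \log_{q}\lVert u \rVert_{w} - \log_{q}\lVert u \rVert_{v_{0}} = \pm\,\mathrm{ord}_{H}(\widetilde{u}_{\mathrm{rat}}) \in \mathds{Z},
\]
with a sign fixed by the orientation convention.

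Third, for the harmonicity relation, I would invoke that $\widetilde{u}_{\mathrm{rat}}$ is a nonzero rational function on $\mathds{P}^{r-1}_{\mathds{F}}$, so its principal divisor has degree $0$; since $(\widetilde{u}_{\mathrm{rat}}) = \sum_{H}\mathrm{ord}_{H}(\widetilde{u}_{\mathrm{rat}})\,H$ with each $H$ of degree $1$, we get $\sum_{H}\mathrm{ord}_{H}(\widetilde{u}_{\mathrm{rat}}) = 0$. Translated through the bijection between $\mathds{F}$-rational hyperplanes of $L/\pi L$ and $\mathbf{A}_{v_{0},1}$, this is exactly \eqref{Eq.Property-van-der-Put-transform}.

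The hardest step is the identification in the second paragraph: establishing rigorously that moving from $v_{0}$ to the adjacent type-$1$ vertex $w = [L']$ changes $\log_{q}\lVert u \rVert$ by precisely the order of $\widetilde{u}_{\mathrm{rat}}$ along the corresponding hyperplane $H$. This requires selecting an $O$-basis of $L$ adapted to the flag $L' \subsetneq L$, tracking how the norm $\nu_{\boldsymbol{\omega}}$ degenerates as $\lambda(\boldsymbol{\omega})$ approaches $w$, and — because $\widetilde{u}_{\mathrm{rat}}$ a priori is only regular on $\widetilde{\Omega}(v_{0})$ — interpreting $\mathrm{ord}_{H}$ via a suitable formal model (or, equivalently, a Laurent expansion of $c^{-1}u$ along the edge $(v_{0},w)$) to show that the order of vanishing there coincides with the jump in spectral norm. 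Once that matching is in place, integrality and harmonicity fall out simultaneously.
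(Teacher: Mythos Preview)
Your proposal is correct and follows essentially the same route as the paper's own sketch in \ref{subsubsection.Intuition-for-property-of-van-der-Put-transform}: normalize $u$ to have spectral norm $1$ at the vertex, pass to the reduction $\overline{u}$ as a rational function on $\mathds{P}^{r-1}_{\mathds{F}}$ with divisor supported on the $\mathds{F}$-rational hyperplanes, identify $P(u)(e)$ with (minus) the vanishing order of $\overline{u}$ along $\overline{H}_{e}$, and deduce the harmonicity relation from the fact that the total degree (what the paper calls the \emph{weight}) of $\overline{u}$ is zero. The step you single out as hardest---matching the jump in $\log_{q}\lVert u\rVert$ along $e$ with the order of $\overline{u}$ along $\overline{H}_{e}$---is exactly the point the paper states without argument (``Then $P(u)(e) = -m_{e}$''), deferring to \cite{Gekeler17}; note also that the relevant affinoid is simply $\lambda^{-1}(v_{0})$ itself rather than a subdomain of $\lambda^{-1}(\mathrm{st}(v_{0}))$, and the sign is determinate: $P(u)(e) = -m_{e}$.
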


\stepcounter{subsubsection}

Actually, in \cite{Gekeler17} the condition $\sum_{e \in \mathbf{A}_{v,r-1}} P(u)(e) = 0$ is given instead of \eqref{Eq.Property-van-der-Put-transform}, due to another choice of orientation. We will discuss this in
more detail in \ref{Subsection.Zeroes} and \ref{Remark.Z-valued-harmonic-cochains}(i), which will also show that both conditions are equivalent in our framework.

\begin{Remark}
	\begin{enumerate}[wide=15pt, label=(\roman*)]
		\item In the case $r=2$, the results \ref{Theorem.Image-of-invertible-holomorphic-function}, \ref{Theorem.Invertible-holomorphic-functions-are-affine-on-certain-sets}, \ref{Proposition.Properties-of-the-van-der-Put-transform} have been 
		known for quite some time: see \cite{VanDerPut8182} and e.g. \cite{FresnelVanDerPut81} I.8.9. For general $r$, they are shown in \cite{Gekeler17} and \cite{GekelerTA} in the framework of these papers, where 
		$\mathrm{char}(K) = \mathrm{char}(\mathds{F}) = p$. However, the proofs make no use of this assumption, and are therefore valid for $\mathrm{char}(K) = 0$, too.
		\item The three cited results are local in the sense that they do not require $u$ to be a global unit. If, e.g., $u$ is a holomorphic function without zeroes on the affinoid $\lambda^{-1}(x)$ with $x \in \mathcal{BT}(\mathds{Q})$, then
		$\lvert u(\boldsymbol{\omega}) \rvert$ is constant on $\lambda^{-1}(x)$; if $u$ is invertible on $\lambda^{-1}(\sigma)$ with a closed simplex $\sigma$ of $\mathcal{BT}$, then $\log u$ is affine there, and if $u$ is invertible 
		on $\lambda^{-1}(\mathrm{st}(v))$, where $\mathrm{st}(v)$ is the star of $v \in \mathbf{V}(\mathcal{BT})$ (see \eqref{Eq.Ster-of-vertices}), then $P(u)(e)$ is defined for all $e \in \mathbf{A}_{v}$ and satisfies 
		\eqref{Eq.Property-van-der-Put-transform}.
		\item It is immediate from the definitions that for invertible functions $u$, $u'$ and arrows $e$,
		\begin{equation} \label{Eq.van-der-put-transform-easy-property-1}
			P(u)(e) + P(u)(\bar{e}) = 0,
		\end{equation}
		and more generally
		\begin{equation} \label{Eq.van-der-put-transform-easy-property-2}
			\sum P(u)(e) = 0, \quad \text{if $e$ runs through the arrows of a closed path in $\mathcal{BT}$},
		\end{equation}
		 as well as
		 \begin{equation} \label{Eq.van-der-put-transform-easy-property-3}
		 	P(uu') = P(u) + P(u').
		 \end{equation}
		 Hence the van der Put transform $P \colon u \mapsto P(u)$ is a homomorphism from the multiplicative group $\mathcal{O}(\Omega)^{*}$ of invertible holomorphic functions on $\Omega$ to the additive group of maps $\varphi \colon \mathbf{A}(\mathcal{BT}) \to \mathds{Z}$ that satisfy \eqref{Eq.van-der-put-transform-easy-property-1}, \eqref{Eq.van-der-put-transform-easy-property-2} and \eqref{Eq.Property-van-der-Put-transform}. Moreover, for $\gamma \in G(K)$,
		 \begin{equation} \label{Eq.van-der-put-transform-group-action-property}
		 	P(u)(e\gamma) = P(u \circ \gamma^{-1})(e),
		 \end{equation} 
		 i.e., $\gamma(P(u)) = P(\gamma u) \vcentcolon = P(u \circ \gamma^{-1})$ holds; whence $P$ is $G(K)$-equivariant. 
	\end{enumerate}
\end{Remark}

In Theorem 3.10 we will find exact conditions that characterize the image of $P$. This will yield the exact sequence
\eqref{Eq.van-der-Put-Short-exact-sequence-for-general-r} of $G(K)$-modules that generalizes \eqref{Eq.van-der-Put-Short-exact-sequence-of-PGL-2-K-modules}.

\section{Evaluation of \emph{P} on elementary rational functions}

\subsection{} Let $U$ be a subspace of $V = K^{r}$ of dimension $t$, where $1 \leq t \leq r-1$. We define the shift toward $U$ on $\mathbf{V}(\mathcal{BT})$ by
\begin{equation}
	\begin{split}
		\tau_{U} \colon \mathbf{V}(\mathcal{BT}) 	&\longrightarrow \mathbf{V}(\mathcal{BT}), \\
												v = [L]				&\longmapsto [L']
	\end{split}
\end{equation}
where $L' = (L \cap U) + \pi L$. Obviously, $e = (v, \tau_{U}(v))$ is a well-defined arrow of type $\mathrm{type}(e) = \dim U = t$. We say that \textbf{$e$ points to $U$}.

\stepcounter{subsubsection}
\stepcounter{equation}

\subsubsection{} For a local ring $R$ (in practice: $R=K$, or $O$, or a finite quotient $O_{n} \vcentcolon = O/(\pi^{n})$) and a free $R$-module $F$ of finite rank, let $\mathrm{Gr}_{R,t}(F)$ be the 
Grassmannian of direct summands $F'$ such that $\mathrm{rank}_{R} F' = t$. Fixing $v = [L] \in \mathbf{V}(\mathcal{BT})$, there is a natural surjective map
\begin{equation}
	\begin{split}
		\mathrm{Gr}_{K,t}(V)	&\longrightarrow \mathbf{A}_{v,t} \\
								U		&\longmapsto (v, \tau_{U}(v))
	\end{split}
\end{equation}
and a canonical bijection
\begin{equation} \label{Eq.Canonical-bijection-of-Avt-and-Gr-FtLpiL}
	\begin{tikzcd}
		\mathbf{A}_{v,t} \ar[r, "\cong"] & \mathrm{Gr}_{\mathds{F}, t}(L/\pi L)
	\end{tikzcd}	
\end{equation}
given by $e = (v,w) = ([L],[M]) \mapsto \bar{M} \vcentcolon = M/\pi L$, where $L \supset M \supset \pi L$. We denote the image of $e$ by $\bar{M}_{e}$ and the pre-image of $\bar{M}$ in $\mathbf{A}_{v,t}$ by $e_{\bar{M}}$.
\stepcounter{subsubsection} \stepcounter{subsubsection}
\subsubsection{} For two arrows $e=e_{\bar{M}}$ and $e' = e_{\bar{M}'}$ with the same origin, we write $e \prec e'$ ($e'$ \textbf{dominates} $e$) if and only if $\bar{M} \subset \bar{M}'$.
\stepcounter{equation} \stepcounter{equation}
\subsubsection{} \label{subsubsection.Identification-of-arrows-with-special-Grassmannians} Fix $n \in \mathds{N}$, let $O_{n}$ be the ring $O/(\pi^{n})$ and let $t \in \{1,r-1\}$. Then, as a generalization 
of the above, $U \mapsto (v, \tau_{U}(v), \dots, \tau_{U}^{n}(v))$ is surjective from $\mathrm{Gr}_{K,t}(V)$ onto the set $\mathbf{A}_{v,t,n}$ of paths of length $n$ in $\mathcal{BT}$ which emanate from $v$, 
are composed of arrows of type $t$, and whose endpoints $w$ have distance $d(v,w) = n$ (e.g., $\mathbf{A}_{v,t,1} = \mathbf{A}_{v,t}$). The set $\mathbf{A}_{v,t,n}$ corresponds one-to-one to 
$\mathrm{Gr}_{O_{n}, t}(L/\pi^{n}L)$, where the composite map from $\mathrm{Gr}_{K,t}(V)$ to $\mathrm{Gr}_{O_{n}, t}(L/\pi^{n}L)$ is given by $U \mapsto ( (L\cap U) + \pi^{n} L)/\pi^{n} L$. This yields in the 
limit the canonical bijections
\begin{equation}
	\begin{tikzcd}
		\mathrm{Gr}_{K,t}(V) \ar[r, "\cong"]	& \varprojlim\limits_{n} \mathbf{A}_{v,t,n} = \varprojlim\limits_{n} \mathrm{Gr}_{O_{n},t} (L/\pi^{n} L) = \mathrm{Gr}_{O,t}(L),
	\end{tikzcd}
\end{equation}
whose composition is simply $U \mapsto U \cap L$. Let $e$ be an arrow of type $t$. Then
\begin{equation} \label{Eq.Special-grassmannian-is-compact-and-open-in-Gr-KtV}
	\mathrm{Gr}_{K,t}(e) \defeq \{ U \in \mathrm{Gr}_{K,t}(V) \mid e \text{ points to } U \}
\end{equation}
is compact and open in the compact space $\mathrm{Gr}_{K,t}(V)$, and it follows from the considerations above that the set of all $\mathrm{Gr}_{K,t}(e)$, where $v$ is fixed and $e$ belongs to 
$\mathbf{A}_{v,t,n}$ for some $n \in \mathds{N}$, form a basis for the topology on $\mathrm{Gr}_{K,t}(V)$.
\subsection{} Given a hyperplane $H$ in $V$, we let $\ell_{H} \colon V \to K$ be a linear form with kernel $H$. We denote by the same symbol its extension $\ell_{H} \colon V \otimes_{K} C = C^{r} \to C$. The quotients
\begin{equation}
	\ell_{H,H'} \vcentcolon = \ell_{H}/\ell_{H'}
\end{equation}
of two such are rational functions on $\mathds{P}^{r-1}(C)$ without zeroes or poles on $\Omega \hookrightarrow \mathds{P}^{r-1}(C)$. Note that $\ell_{H}$ is determined up to multiplication by a non-zero 
scalar in $K$; hence $P(\ell_{H,H'})$ depends only on $H$ and $H'$, but not on the scaling of $\ell_{H}$ and $\ell_{H'}$. Our first task will be to describe $P(\ell_{H,H'})$.
\subsection{} We start with a closer look to the building map $\lambda$. Let $v_{0} = [L_{0}]$ be the standard vertex, where $L_{0}$ is the standard lattice $O^{r}$ in $V$. Let us first recall the easily verified fact 
(where the unimodularity normalization of $\boldsymbol{\omega} \in \Omega$ is used): 
\begin{align}
	\Omega_{v_{0}} \defeq \lambda^{-1}(v_{0}) 	&= \{ \boldsymbol{\omega} \in \Omega \mid \nu_{\boldsymbol{\omega}} \text{ has unit ball } L_{0} \} \\
																		&= \{ \boldsymbol{\omega} \in \Omega \mid \text{ the $\omega_{i}$ are orthogonal and $\lvert \omega_{i} \rvert = 1$ for $1 \leq i \leq r$} \}. \nonumber
\end{align}
($z_{1}, \dots, z_{n} \in C$ are \textbf{orthogonal} if and only if $\lvert \sum_{1 \leq i \leq r} a_{i} z_{i} \rvert = \max_{i} \lvert a_{i} z_{i} \rvert$ for arbitrary coefficients $a_{i} \in K$.) Hence the canonical reduction of 
$\Omega_{v_{0}}$ equals
\begin{equation} \label{Eq.Canonical-reduction-of-preimage-of-v0-under-lambda}
	\bar{\Omega}_{v_{0}} = (\mathds{P}^{r-1}/\mathds{F}) \setminus \bigcup \bar{H},
\end{equation}
where $\bar{H}$ runs through the hyperplanes defined over $O/(\pi) = \mathds{F}$. A similar description holds for $\overline{\lambda^{-1}(v)}$ if $v$ is an arbitrary vertex, but we need some preparations.
\subsection{} Write $\langle \cdot , \cdot \rangle$ for the standard bilinear form on $V$ given by 
\[
	\langle \mathbf{x}', \mathbf{x} \rangle = \sum_{1 \leq i \leq r} x_{i}'x_{i},
\]
which we extend to a form $\langle \cdot, \cdot \rangle$ on $C^{r}$. It identifies $V = K^{r}$ with its dual $V^{\wedge}$. For each $K$-subspace $U$ of $V$, let
\begin{equation} \label{Eq.Orthogonal-complement}
	U^{\perp} \defeq \{ \mathbf{x} \in V \mid \langle \mathbf{x}, U \rangle = 0 \}
\end{equation}
be its orthogonal with respect to $\langle \cdot, \cdot \rangle$. For an $O$-lattice $L$ in $V$,
\begin{equation}
	L^{\wedge} \defeq \{ \mathbf{x} \in V \mid \langle \mathbf{x}, L  \rangle \subset O \}
\end{equation}
is the dual lattice. We put $\tilde{\Omega}$ for the pre-image of $\Omega$ in $C^{r}$. Then $L^{\wedge} \otimes_{O} O_{C}$ embeds into $C^{r}$, and by \eqref{Equation.Building-map} we find:
\stepcounter{subsubsection} \stepcounter{subsubsection} \stepcounter{equation}
\subsubsection{} The image of $(L^{\wedge} \otimes O_{C}) \cap \tilde{\Omega}$ in $\Omega$ equals $\Omega_{v} \defeq \lambda^{-1}(v)$, where $v = [L]$ is the vertex of $\mathcal{BT}$ corresponding
to $L$.

Similarly,
\begin{equation}
	\begin{tikzcd}
		(L^{\wedge} \otimes O_{C}) \otimes O_{C}/\mathfrak{m}_{C} \ar[r, "\cong"]	& (L^{\wedge}/\pi L^{\wedge}) \otimes_{\mathds{F}} \bar{\mathds{F}} = (L/\pi L)^{\wedge} \otimes_{\mathds{F}} \mathds{F}.
	\end{tikzcd}
\end{equation}
\subsection{} Let $\mathds{P}(L^{\wedge}/\pi L^{\wedge})/\mathds{F}$ be the projective space attached to the $r$-dimensional vector space $L^{\wedge}/\pi L^{\wedge}$, regarded as a scheme over
$O/(\pi) = \mathds{F}$. Its $\mathds{F}$-rational hyperplanes correspond to those of the vector space $(L^{\wedge}/\pi L^{\wedge}) \otimes_{\mathds{F}} \bar{\mathds{F}}$, or, by duality, to the
$\mathds{F}$-lines (one-dimensional $\mathds{F}$-subspaces) $\bar{G}$ in $L/\pi L$. Therefore, the canonical reduction $\bar{\Omega}_{v}$ of $\Omega_{v}$ is
\begin{equation} \label{Eq.Description-for-Omega-bar-v}
	\bar{\Omega}_{v} = (\mathds{P}( L^{\wedge}/ \pi L^{\wedge})/\mathds{F}) \setminus \bigcup \bar{H},
\end{equation}
where $\bar{H}$ runs through the hyperplanes defined over $\mathds{F}$. The set of these is in canonical bijection with the set of $\mathds{F}$-lines in $L/\pi L$, that is, with $\mathbf{A}_{v,1}$. For each
$e \in \mathbf{A}_{v,1}$ let $\bar{H}_{e}$ be the corresponding $\mathds{F}$-hyperplane in \eqref{Eq.Description-for-Omega-bar-v}.

\stepcounter{subsubsection}

\subsubsection{} The object $\bar{H}_{e}$ (an $\mathds{F}$-subspace of $L^{\wedge}/\pi L^{\wedge}$ or the corresponding hyperplane in $\mathds{P}(L^{\wedge}/\pi L^{\wedge})/\mathds{F}$, described 
through the same symbol) mustn't be confused with the $\bar{M}_{e}$ of \eqref{Eq.Canonical-bijection-of-Avt-and-Gr-FtLpiL}, which is an $\mathds{F}$-subspace of $L/\pi L$. The relationship is as follows. 
The form $\langle \cdot, \cdot \rangle$ induces an $\mathds{F}$-bilinear form
\[
	\bar{\langle \cdot, \cdot \rangle} \colon L/\pi L \times L^{\wedge}/\pi L^{\wedge} \longrightarrow \mathds{F}.
\]
For an $\mathds{F}$-subspace $\bar{M}$ of $L/\pi L$, $\bar{M}^{\perp}$ denotes its orthogonal with respect to $\bar{\langle \cdot, \cdot \rangle}$ in $L^{\wedge}/\pi L^{\wedge}$. Let $\bar{G}_{e} \subset L/\pi L$
be the line defined by $e \in \mathbf{A}_{v,1}$ as in \eqref{Eq.Canonical-bijection-of-Avt-and-Gr-FtLpiL}. Then $\bar{H}_{e} = (\bar{G}_{e})^{\perp}$.

\subsection{} \label{Subsection.Zeroes} Let $u$ be an invertible holomorphic function on $\Omega$, scaled such that $\lVert u \rVert_{v} = 1$. Its reduction $\bar{u}$ at $v$ is a rational function
on $\bar{\Omega}_{v}$ without zeroes or poles. For each $e \in \mathbf{A}_{v,1}$ let $m_{e}$ be the vanishing order of $\bar{u}$ along $\bar{H}_{e}$ (negative, if $\bar{u}$ has a pole along $\bar{H}_{e})$,
and let $\ell_{e}$ be a linear form on $\mathds{P}( L^{\wedge}/\pi L^{\wedge})/\mathds{F}$ with vanishing locus $\bar{H}_{e}$. Up to a multiplicative constant, $\bar{u}$ equals 
$\prod_{e \in \mathbf{A}_{v,1}} \ell_{e}^{m_{e}}$, and so 
\begin{equation}
	\sum_{e \in \mathbf{A}_{v,1}} m_{e} = \text{weight of the form $\bar{u}$} = 0.
\end{equation}
Now the value of the van der Put transform on $e \in \mathbf{A}_{v,1}$ is (with notation above)
\begin{equation} \label{Equation.Multiplicity}
	P(u)(e) = -m_{e}.
\end{equation}
To see this, we may assume (by \eqref{Eq.van-der-put-transform-group-action-property}, and since the action of $G(K)$ on arrows of type 1 is transitive) that $e = (v_{0}, v_{1})$ with $v_{0} = [L_{0}]$, $v_{1} = [L_{1}]$, $L_{0} = O^{r}$, $L_{1} = (\pi) \times \dots \times (\pi) \times O$. Then $L_{0}^{\wedge} = L_{0}$, $L_{1}^{\wedge} = (\pi^{-1}) \times \dots \times (\pi^{-1}) \times O$, $\bar{H}_{e}$ is the hyperplane $\{ (*:\ldots:*:0) \}$ in 
$\mathds{P}(L_{0}^{\wedge}/\pi L_{0}^{\wedge})/\mathds{F} = \mathds{P}^{r-1}/\mathds{F}$, and we may choose $\ell_{e} \colon \mathbf{x} = (x_{1}: \ldots : x_{r}) \mapsto x_{r}$, which is the reduction of the global
form $\tilde{\ell} \colon \boldsymbol{\omega} = (\omega_{1}: \ldots : \omega_{r}) \mapsto \omega_{r}$ on $\Omega$. In order to get \enquote{functions} instead of \enquote{forms}, we work with
$\ell_{e}/\ell_{1}$ (resp. $\tilde{\ell}/\tilde{\ell}_{1})$, where $\ell_{1} \colon \mathbf{x} \mapsto x_{1}$ with lift $\tilde{\ell}_{1} \colon \boldsymbol{\omega} \mapsto \omega_{1}$. If $\bar{u}$ has a zero of order
$m$ along $\bar{H}_{e}$ (i.e., $\bar{u} = \bar{u}_{0}(\ell_{e}/\ell_{1})^{m}$, where $\bar{u}_{0}$ has neither zeroes nor poles on $\bar{\Omega}_{v_{0}}$ and along $\bar{H}_{e}$), then $u$ grows like
$(\tilde{\ell}/\tilde{\ell}_{1})^{m}$ when moving from $\Omega_{v_{0}}$ to $\Omega_{v_{1}}$. But the absolute value of $\tilde{\ell}/\tilde{\ell}_{1}$ on $\Omega_{v_{0}}$ is 1, while it is $\lvert \pi \rvert = q^{-1}$ on
$\Omega_{v_{1}}$, which shows \eqref{Equation.Multiplicity}.

Finally, combining the above yields
\begin{equation}
	\sum_{e \in \mathbf{A}_{v,1}} P(u)(e) = 0,
\end{equation}
that is, the assertion of \eqref{Eq.Property-van-der-Put-transform}.

\subsection{} Each hyperplane $H$ of $V$ is given as the kernel of a linear form
\begin{equation}
	\ell_{H} = \ell_{\mathbf{y}} \colon \mathbf{x} \longmapsto \langle \mathbf{y}, \mathbf{x} \rangle
\end{equation}
with some $\mathbf{y} \in L_{0} \setminus \pi L_{0}$. Let $G = K\mathbf{y} = H^{\perp}$ be the line spanned by $\mathbf{y}$. The arrow $(v_{0}, \tau_{G}(v_{0})) \in \mathbf{A}_{v_{0},1}$ equals $e_{\bar{G}}$ with
\[
	\bar{G} = (O\mathbf{y} + \pi L_{0})/\pi L_{0}.
\]
Two such vectors $\mathbf{y}, \mathbf{y}'$ give rise to the same $e_{\bar{G}}$ if and only if $\mathbf{y}' \equiv c \cdot \mathbf{y} \pmod{\pi}$ with some unit $c \in O^{*}$. More generally, $\mathbf{y}$ and $\mathbf{y}'$ give rise 
to the same path $(v_{0}, \tau_{G}(v_{0}), \dots, \tau_{G}^{n}(v_{0})) \in \mathbf{A}_{v_{0},1,n}$ if and only if
\begin{equation} \label{Eq.n-equivalence-of-vectors}
	\mathbf{y}' \equiv c \cdot \mathbf{y} \pmod{\pi^{n}}
\end{equation}
with $c \in O^{*}$. In this case we call $\mathbf{y}$ and $\mathbf{y}'$ \textbf{$\boldsymbol{n}$-equivalent}; the respective equivalence classes are briefly the \textbf{$\boldsymbol{n}$-classes} of $\mathbf{y}, \mathbf{y}'$.

\subsection{} \label{subsection.Rational-function-corresponding-to-two-hyperplanes} Let now hyperplanes $H, H'$ of $V$ be given by $\mathbf{y}, \mathbf{y}'$ as above. Put $G = H^{\perp} = K\mathbf{y}$,
$G' = K\mathbf{y}'$. The function $\ell_{H,H'} = \ell_{\mathbf{y}}/ \ell_{\mathbf{y}'}$ has constant absolute value 1 on $\Omega_{v_{0}}$ and therefore, by reduction, gives a rational function $\bar{\ell}_{H,H'}$
without zeroes or poles on $\bar{\Omega}_{v_{0}} \hookrightarrow \mathds{P}^{r-1}/\mathds{F}$. Put
\[
	\bar{H} = ( (L_{0} \cap H) + \pi L_{0}) / \pi L_{0},
\]
and ditto $\bar{H}'$. By definition, it is an $\mathds{F}$-subvector space of $L_{0}/\pi L_{0} \overset{\cong}{\rightarrow} \mathds{F}^{r}$. As usual, we denote by the same symbol the corresponding 
$\mathds{F}$-rational linear subvariety of $\mathds{P}^{r-1}/\mathds{F}$ that appears e.g. in \eqref{Eq.Canonical-reduction-of-preimage-of-v0-under-lambda}. Suppose that $\bar{H}$ differs from 
$\bar{H}'$. Then $\bar{\ell}_{H,H'}$ has vanishing order 1 along $\bar{H}$, vanishing order $-1$ along $\bar{H}'$, and vanishing order 0 along the other hyperplanes in the boundary of 
$\bar{\Omega}_{v_{0}}$ (see \eqref{Eq.Canonical-reduction-of-preimage-of-v0-under-lambda}).
If however $\bar{H} = \bar{H}'$, then $\bar{\ell}_{H,H'}$ has neither zeroes nor poles along the boundary (and is therefore constant). According to the recipe discussed in \ref{Subsection.Zeroes}, 
we find the following description.

\begin{Proposition} \label{Proposition.van-der-Put-transform-on-special-rational-function-evaluated-on-special-arrow-of-type-1}
	Let $e$ be an arrow in $\mathbf{A}_{v_{0},1}$. Then
	\[
		P(\ell_{H,H'})(e) = \begin{cases} -1,	&e=(v_{0}, \tau_{G}(v_{0})) \neq (v_{0}, \tau_{G'}(v_{0})), \\ +1, 	&e=(v_{0}, \tau_{G'}(v_{0})) \neq (v_{0}, \tau_{G}(v_{0})), \\ \hphantom{+}0,	&\text{otherwise.} \end{cases}
	\]
	~\hfill{$\square$}
\end{Proposition}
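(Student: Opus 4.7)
My plan is to deduce the proposition directly from the recipe in \ref{subsubsection.Intuition-for-property-of-van-der-Put-transform}, having first verified its hypotheses for $u = \ell_{H,H'}$. Since $\mathbf{y}, \mathbf{y}' \in L_{0} \setminus \pi L_{0}$, the linear forms $\ell_{\mathbf{y}}, \ell_{\mathbf{y}'}$ take absolute value $1$ on $\lambda^{-1}(v_{0})$ (their reductions are the non-zero $\mathds{F}$-linear forms $\overline{\ell}_{\mathbf{y}}, \overline{\ell}_{\mathbf{y}'}$ on $\mathds{F}^{r}$, which vanish only along their respective hyperplanes $\overline{H}, \overline{H}'$, lying outside $\overline{\lambda^{-1}(v_{0})}$). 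Hence $\lVert \ell_{H,H'} \rVert_{v_{0}} = 1$ and $\ell_{H,H'}$ has a well-defined reduction $\overline{\ell}_{H,H'} = \overline{\ell}_{\mathbf{y}}/\overline{\ell}_{\mathbf{y}'}$ as a non-zero rational function on $\overline{\lambda^{-1}(v_{0})}$.

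Next I identify vanishing orders along the boundary hyperplanes of $\overline{\lambda^{-1}(v_{0})}$. By \eqref{Eq.Canonical-reduction-of-preimage-of-v0-under-lambda} these boundary components are precisely the $\mathds{F}$-rational hyperplanes $\overline{H}_{e}$ indexed by $e \in \mathbf{A}_{v_{0},1}$ via \eqref{Eq.Canonical-bijection-of-Avt-and-Gr-FtLpiL}; moreover, $e = (v_{0}, \tau_{H}(v_{0}))$ corresponds under this bijection to the reduction $\overline{H}$ of $H$ modulo $\pi L_{0}$, since the lattice $L_{0}' = (L_{0} \cap H) + \pi L_{0}$ by construction has image $\overline{H}$ in $L_{0}/\pi L_{0}$, and similarly for $H'$. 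The linear form $\overline{\ell}_{\mathbf{y}}$ has simple zero along $\overline{H}$ and is non-vanishing on every other $\mathds{F}$-rational hyperplane; the same for $\overline{\ell}_{\mathbf{y}'}$ along $\overline{H}'$. Taking the quotient, the vanishing order $m_{e}$ of $\overline{\ell}_{H,H'}$ along $\overline{H}_{e}$ is $+1$ if $\overline{H}_{e} = \overline{H} \neq \overline{H}'$, equals $-1$ if $\overline{H}_{e} = \overline{H}' \neq \overline{H}$, and is $0$ in all other cases (with the degenerate case $\overline{H} = \overline{H}'$ yielding $\overline{\ell}_{H,H'}$ a non-zero constant, so all $m_{e} = 0$).

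Finally, the recipe \ref{subsubsection.Intuition-for-property-of-van-der-Put-transform} gives $P(\ell_{H,H'})(e) = -m_{e}$, and substitution of the three cases above yields the stated formula. The only point requiring care is the last one, namely that when $\tau_{H}(v_{0}) = \tau_{H'}(v_{0})$ (equivalently, $\overline{H} = \overline{H}'$, equivalently the $1$-classes of $\mathbf{y}$ and $\mathbf{y}'$ coincide in the sense of \eqref{Eq.n-equivalence-of-vectors}), the reduction is a non-zero scalar in $\mathds{F}^{*}$; this is precisely what makes the ``otherwise'' branch work and is the main (though minor) case distinction in the argument. No further obstacle is expected, since the computation is wholly local at $v_{0}$ and uses only the already established Theorems \ref{Theorem.Image-of-invertible-holomorphic-function}, \ref{Theorem.Invertible-holomorphic-functions-are-affine-on-certain-sets} together with the reduction formalism.
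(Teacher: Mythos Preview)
Your proposal is correct and follows essentially the same approach as the paper: the argument is the content of \ref{subsection.Rational-function-corresponding-to-two-hyperplanes}, which verifies that $\lVert \ell_{H,H'} \rVert_{v_{0}} = 1$, computes the vanishing orders of the reduction $\overline{\ell}_{H,H'}$ along the boundary hyperplanes (distinguishing the cases $\overline{H} \neq \overline{H}'$ and $\overline{H} = \overline{H}'$), and then invokes the recipe $P(u)(e) = -m_{e}$ from \ref{subsubsection.Intuition-for-property-of-van-der-Put-transform}. Your write-up is slightly more explicit in justifying $\lvert \ell_{\mathbf{y}}(\boldsymbol{\omega}) \rvert = 1$ on $\lambda^{-1}(v_{0})$ and in spelling out the correspondence $e \leftrightarrow \overline{H}_{e}$, but the logic is identical.
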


Again by the transitivity of the action of $G(K)$, we may transfer \ref{Proposition.van-der-Put-transform-on-special-rational-function-evaluated-on-special-arrow-of-type-1} 
to arbitrary arrows of type 1, and thus get:

\begin{Corollary} \label{Corollary.van-der-Put-transform-of-special-rational-function-evaluated-on-arbitrary-arrow-of-type-1}
	Let $e \in \mathbf{A}_{v,1}$ be an arrow of type 1 with arbitrary origin $v \in \mathbf{V}(\mathcal{BT})$. Write $e_{H}^{\perp}$ (resp. $e_{H'}^{\perp}$) for the arrow $(v, \tau_{H^{\perp}}(v))$ 
	(resp. $(v, \tau_{H'^{\perp}}(v))$). Then
	\[
		P(\ell_{H,H'})(e) = \begin{cases} -1, &e=e_{H}^{\perp} \neq e_{H'}^{\perp}, \\ +1, &e=e_{H'}^{\perp} \neq e_{H}^{\perp}, \\ \hphantom{+}0,	&\text{otherwise.}\end{cases}
	\]
	~\hfill $\square$
\end{Corollary}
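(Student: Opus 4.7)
The plan is to reduce the statement to the already-established Proposition \ref{Proposition.van-der-Put-transform-on-special-rational-function-evaluated-on-special-arrow-of-type-1} by using transitivity of $\mathrm{G}(K) = \mathrm{GL}(r,K)$ on $\mathbf{V}(\mathcal{BT})$ together with the equivariance identity $P(\ell_{H,H'})(e\gamma) = P(\ell_{H\gamma^{-1},H'\gamma^{-1}})(e)$ displayed just before the corollary. Given $e \in \mathbf{A}_{v,1}$, I would first pick $\gamma \in \mathrm{G}(K)$ sending $v_{0}$ to $v$ (right action). Since the $\mathrm{G}(K)$-action on $\mathcal{BT}$ preserves types of arrows and acts transitively on $\mathbf{A}_{v_{0},1}$ onto $\mathbf{A}_{v,1}$, I can write $e = e_{0}\gamma$ for a unique $e_{0} \in \mathbf{A}_{v_{0},1}$. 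Applying the equivariance identity gives
\[
P(\ell_{H,H'})(e) \;=\; P(\ell_{H\gamma^{-1},H'\gamma^{-1}})(e_{0}),
\]
and Proposition \ref{Proposition.van-der-Put-transform-on-special-rational-function-evaluated-on-special-arrow-of-type-1} evaluates the right-hand side in terms of the arrows $(v_{0}, \tau_{H\gamma^{-1}}(v_{0}))$ and $(v_{0}, \tau_{H'\gamma^{-1}}(v_{0}))$.

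The second step is a short compatibility check: for any hyperplane $W$ of $V$, the shift construction satisfies $\tau_{W\gamma^{-1}}(v_{0})\cdot\gamma = \tau_{W}(v_{0}\gamma) = \tau_{W}(v)$. Taking $L = O^{r}$ and $L\gamma$ as a representative of $v$, this follows from the lattice identity $((L \cap W\gamma^{-1}) + \pi L)\gamma = (L\gamma \cap W) + \pi L\gamma$, which in turn is immediate from the fact that $\gamma$ is a $K$-linear automorphism of $V$. Consequently, the condition $e_{0} = (v_{0}, \tau_{H\gamma^{-1}}(v_{0}))$ translates under right multiplication by $\gamma$ to $e = (v, \tau_{H}(v)) = e_{H}$, and similarly for $H'$.

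Substituting these equivalences into the three cases of Proposition \ref{Proposition.van-der-Put-transform-on-special-rational-function-evaluated-on-special-arrow-of-type-1} yields exactly the stated values $-1$, $+1$, $0$ in terms of $e_{H}$ and $e_{H'}$, proving the corollary. The only point requiring care is the bookkeeping between the right actions of $\gamma$ on hyperplanes, on lattices, and on arrows; once the identity $\tau_{W\gamma^{-1}}(v_{0})\gamma = \tau_{W}(v)$ is in place, no further computation is needed. I do not anticipate any serious obstacle: both the transitivity of $\mathrm{G}(K)$ on $\mathbf{V}(\mathcal{BT})$ and the equivariance of $P$ are already recorded in the paper.
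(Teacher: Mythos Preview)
Your proposal is correct and follows exactly the approach the paper takes: the paper simply invokes the equivariance identity $P(\ell_{H,H'})(\gamma e) = P(\ell_{H\gamma^{-1},H'\gamma^{-1}})(e)$ together with transitivity of $\mathrm{G}(K)$ on $\mathbf{V}(\mathcal{BT})$ to transfer Proposition~\ref{Proposition.van-der-Put-transform-on-special-rational-function-evaluated-on-special-arrow-of-type-1} from $v_{0}$ to an arbitrary vertex. Your additional compatibility check $\tau_{W\gamma^{-1}}(v_{0})\gamma = \tau_{W}(v)$ is the one step the paper leaves implicit, and your verification of it is fine.
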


Next, we deal with arrows of arbitrary type.

\begin{Proposition} \label{Proposition.van-der-Put-transform-of-special-rational-function-evaluated-on-abitrary-arrow}
	Given hyperplanes $H,H'$ of $V$ and an arrow $e$ of $\mathcal{BT}$ with origin $v \in \mathbf{V}(\mathcal{BT})$, let $e_{H}^{\perp}$ (resp. $e_{H'}^{\perp}$) be the arrow with origin $v$ pointing to 
	$G = H^{\perp}$ (resp. to $G' = H'^{\perp}$). The transform $P(\ell_{H,H'})$ evaluates on $e$ as follows:
	\[
		P(\ell_{H,H'})(e) = \begin{cases} -1, & e_{H}^{\perp} \prec e, e_{H'}^{\perp} \nprec e, \\ +1, &e_{H'}^{\perp} \prec e, e_{H}^{\perp} \nprec e, \\ \hphantom{+}0, &\text{otherwise.} \end{cases}
	\]
\end{Proposition}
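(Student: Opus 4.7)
The plan is to reduce to arrows of type~1 by decomposing $e$ into a chain of type-1 arrows within a common simplex, applying Corollary~\ref{Corollary.van-der-Put-transform-of-special-rational-function-evaluated-on-arbitrary-arrow-of-type-1}, and then counting the resulting ``hits'' combinatorially.

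Write $e = ([L],[L'])$ with $L' = (L \cap U) + \pi L$, where $U$ is the codim-$t$ subspace to which $e$ points. Since $\dim_{\mathds{F}}(L/L') = t$, pick any chain $L = L_0 \supsetneq L_1 \supsetneq \cdots \supsetneq L_t = L'$ with each $L_{i-1}/L_i$ one-dimensional over $\mathds{F}$. The vertices $v_i := [L_i]$ span a closed $t$-simplex of $\mathcal{BT}$ (since $L_0 \supsetneq \cdots \supsetneq L_t \supsetneq \pi L_0$), and each $e_i := (v_{i-1}, v_i)$ is a type-1 arrow. By Theorem~\ref{Theorem.Invertible-holomorphic-functions-are-affine-on-certain-sets}, $\log |\ell_{H,H'}|$ is affine on this simplex, yielding
\[ P(\ell_{H,H'})(e) = \sum_{i=1}^{t} P(\ell_{H,H'})(e_i). \]

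The crux is counting, along the chain, how many $e_i$ equal $e_H(v_{i-1})$ (and analogously for $H'$). Identify $e_i$ with the codim-1 subspace $V_i := L_i/\pi L$ of $V_{i-1} := L_{i-1}/\pi L$, all viewed inside $V_0 := L/\pi L$; in particular $V_t = \overline{M}_e$. A direct check gives $e_i = e_H(v_{i-1})$ iff $V_i = V_{i-1} \cap \overline{L \cap H}$, where $\overline{L \cap H} \subset V_0$ is the reduction of $L \cap H$. I claim that, \emph{independently of the chain}, the number of such hit-indices equals $1$ if $e \prec e_H$ and $0$ otherwise: once some $V_j \subseteq \overline{L \cap H}$, every later $V_i$ satisfies $V_{i-1} \cap \overline{L \cap H} = V_{i-1} \neq V_i$, precluding further hits; conversely, if $V_t = \overline{M}_e \subseteq \overline{L \cap H}$ (i.e.\ $e \prec e_H$), then at the smallest $j$ with $V_j \subseteq \overline{L \cap H}$, both $V_j$ and $V_{j-1} \cap \overline{L \cap H}$ are codim-1 in $V_{j-1}$ and hence coincide. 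The same analysis applies verbatim to $H'$.

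By Corollary~\ref{Corollary.van-der-Put-transform-of-special-rational-function-evaluated-on-arbitrary-arrow-of-type-1}, each $P(\ell_{H,H'})(e_i)$ equals $-1$ if $e_i$ hits only $e_H(v_{i-1})$, $+1$ if it hits only $e_{H'}(v_{i-1})$, and $0$ otherwise (including when $e_H(v_{i-1}) = e_{H'}(v_{i-1})$). If both $e \prec e_H$ and $e \prec e_{H'}$, the two unique hit-indices either coincide (giving a single contribution $0$) or differ (so contributions $-1$ and $+1$ cancel), and either way $P(\ell_{H,H'})(e) = 0$; the remaining cases match the proposition by inspection. The main technical point is the chain-independent hit count in the previous paragraph; given that, the combinatorics is immediate.
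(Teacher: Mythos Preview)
Your proof is correct and follows the same overall strategy as the paper: decompose $e$ into a chain of type-$1$ arrows inside a common simplex and sum the values given by Corollary~\ref{Corollary.van-der-Put-transform-of-special-rational-function-evaluated-on-arbitrary-arrow-of-type-1}. The difference is in the execution. The paper \emph{chooses} the flag so that $\overline{M}_1 = \overline{H}$ (and, when relevant, $\overline{M}_2 = \overline{H} \cap \overline{H}'$), which forces the $H$-hit to occur at $e_1$ and the $H'$-hit (if any) at $e_2$; it then argues separately in each of the four cases. You instead work with an \emph{arbitrary} flag and prove the chain-independent hit-count lemma: along any refinement $V_0 \supsetneq \dots \supsetneq V_t$, the number of indices $i$ with $V_i = V_{i-1} \cap \overline{L\cap H}$ is $1$ or $0$ according as $V_t \subseteq \overline{L\cap H}$ or not. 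This lemma is a nice observation that makes the case analysis uniform and avoids having to tailor the flag; the paper's approach is slightly quicker in each individual case but requires re-choosing the flag for each configuration of $e_H, e_{H'}$. Two minor remarks: the telescoping identity $P(\ell_{H,H'})(e) = \sum_i P(\ell_{H,H'})(e_i)$ follows directly from the definition of $P$ (or from \eqref{Eq.van-der-put-transform-easy-property-2}), so the appeal to Theorem~\ref{Theorem.Invertible-holomorphic-functions-are-affine-on-certain-sets} is not needed; and the opening sentence introducing $U$ is unnecessary, since all you use is the existence of the chain $L = L_0 \supsetneq \dots \supsetneq L_t = L' \supsetneq \pi L$.
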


\begin{proof}
	Let $L$ be a lattice with $[L] = v$ and $e = e_{\bar{M}}$, where $\bar{M}$ is a subspace of $L/\pi L$ of dimension $t = \mathrm{type}(e)$. Since the case $t=1$ is given by the last corollary, we may 
	assume that $t \geq 2$. Suppose that $e_{H}^{\perp} \prec e$, i.e.,
	\[
		\bar{G} = ( (L \cap G) + \pi L)/\pi L \subset \bar{M} \subset L/\pi L.
	\]	
	Let $\bar{M}_{0} = 0 \subsetneq \bar{M}_{1} = \bar{G} \subsetneq \dots \subsetneq \bar{M}_{t} = \bar{M}$ be a complete flag connecting $0$ to $\bar{M}$, where 
	$\dim \bar{M}_{i} = i$ for $0 \leq i \leq t$. It corresponds to a path $(v_{0}, v_{1}, \dots, v_{t})$ in $\mathcal{BT}$, where $v_{0} = v = [L]$, $v_{t} = t(e_{\bar{M}})$, and all the arrows 
	$e_{1} = (v_{0}, v_{1}), \dots, e_{t} = (v_{t-1}, v_{t})$ of type 1. As $\{v_{0}, \dots, v_{t}\}$ is a $t$-simplex, $d(v_{0}, v_{i}) = 1$ for $1 \leq i \leq t$, and therefore no $e_{i}$ different from 
	$e_{1} = e_{\bar{G}}$ points to $G$.
	
	Suppose that moreover $e_{H'}^{\perp} \nprec e$, that is,
	\[
		( (L\cap G') + \pi L) / \pi L \not\subset \bar{M}.
	\]
	Then none of the $e_{i}$ ($1 \leq i \leq t$) points to $G'$, so
	\[
		P(\ell_{H,H'})(e) = \sum_{1 \leq i \leq t} P(\ell_{H,H'})(e_{i}) = P(\ell_{H,H'})(e_{1}) = -1
	\]
	by \eqref{Eq.van-der-put-transform-easy-property-2} and Corollary \ref{Corollary.van-der-Put-transform-of-special-rational-function-evaluated-on-arbitrary-arrow-of-type-1}. If 
	$e_{H}^{\perp} \neq e_{H'}^{\perp} \prec e$, then we can arrange the flag
	$\bar{M}_{0} \subsetneq \dots \subsetneq \bar{M}_{t}$ such that as before $e_{1}$ points to $G$, $e_{2}$ points to $G'$, and no $e_{i}$ ($3 \leq i \leq t$) points to $G$ or $G'$. In this case
	\[
		P(\ell_{H,H'})(e) = P(\ell_{H,H'})(e_{1}) + P(\ell_{H,H'})(e_{2}) = -1 + 1 = 0.
	\]
	If $e_{H}^{\perp} = e_{H'}^{\perp} \prec e$, then
	\[
		P(\ell_{H,H'})(e) = P(\ell_{H,H'})(e_{1}) = 0 \quad \text{by \ref{Corollary.van-der-Put-transform-of-special-rational-function-evaluated-on-arbitrary-arrow-of-type-1}.}
	\]
	If neither $e_{H}^{\perp} \prec e$ nor $e_{H'}^{\perp} \prec e$, neither of the arrows $e_{i}$ ($1 \leq i \leq t$) corresponding to a flag $\bar{M}_{0} =0 \subsetneq \dots \subsetneq \bar{M}_{t} = \bar{M}$
	points to $G$ or to $G'$, and so $P(\ell_{H,H'})(e) = 0$ results. The case $e_{H'}^{\perp} \prec e$, $e_{H}^{\perp} \nprec e$ comes out by symmetry.
\end{proof}

\begin{Corollary} \label{Corollary.Condition-C}
	Let $H_{1}, \dots, H_{n}$ be finitely many hyperplanes of $V$ with corresponding linear forms $\ell_{i} = \ell_{H_{i}}$, $\mathrm{ker}(\ell_{i}) = H_{i}$, and multiplicities $m_{i} \in \mathds{Z}$ such that 
	$\sum_{1 \leq i \leq n} m_{i} = 0$. The function
	\[
		u \vcentcolon = \prod_{1 \leq i \leq n} \ell_{i}^{m_{i}}
	\]
	is a unit on $\Omega$, whose van der Put transform $P(u)$ satisfies the condition:
	\begin{enumerate}
		\item[$\mathrm{(C)}$] For each arrow $e \in \mathbf{A}(\mathcal{BT})$ with $o(e) = v \in \mathbf{V}(\mathcal{BT})$,
		\[
			P(u)(e) = \sum_{\substack{e' \in \mathbf{A}_{v,1} \\ e' \prec e}} P(u)(e').
		\] 
	\end{enumerate}
\end{Corollary}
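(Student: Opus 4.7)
First, I would note that $u$ is well-defined and invertible on $\Omega$. Since $\sum_i m_i = 0$, the product $u = \prod_i \ell_i^{m_i}$ is homogeneous of degree $0$ on $V \otimes_K C = C^r$, so it descends to a rational function on $\mathds{P}^{r-1}$. Each $\ell_i$ vanishes only on the $K$-rational hyperplane $H_i$, which by definition is disjoint from $\Omega$; hence $u \in \mathcal{O}(\Omega)^*$.

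The main idea is to reduce the verification of (C) to the elementary functions $\ell_{H,H'}$ already analyzed in Proposition \ref{Proposition.van-der-Put-transform-of-special-rational-function-evaluated-on-abitrary-arrow}. Using $\sum_i m_i = 0$, write
\[
  u = \prod_{i} \ell_i^{m_i} = \prod_{i \geq 2} (\ell_i / \ell_1)^{m_i} = \prod_{i \geq 2} \ell_{H_i, H_1}^{m_i}.
\]
By multiplicativity of $P$ (equation \eqref{Eq.van-der-put-transform-easy-property-3}) and $\mathds{Z}$-linearity of both sides of (C) in $u$, it suffices to check (C) when $u = \ell_{H,H'}$ for a single pair of hyperplanes $H, H'$.

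For the elementary case, fix $v \in \mathbf{V}(\mathcal{BT})$ and an arrow $e$ with $o(e)=v$; let $e_H, e_{H'} \in \mathbf{A}_{v,1}$ be as in Proposition \ref{Proposition.van-der-Put-transform-of-special-rational-function-evaluated-on-abitrary-arrow}. By Corollary \ref{Corollary.van-der-Put-transform-of-special-rational-function-evaluated-on-arbitrary-arrow-of-type-1}, the only type-$1$ arrows $e'$ emanating from $v$ on which $P(\ell_{H,H'})$ is non-zero are $e_H$ (value $-1$) and $e_{H'}$ (value $+1$), and only when $e_H \neq e_{H'}$. Therefore
\[
  \sum_{\substack{e' \in \mathbf{A}_{v,1} \\ e \prec e'}} P(\ell_{H,H'})(e')
  \;=\; \mathbf{1}[e \prec e_{H'},\, e_H \neq e_{H'}] \;-\; \mathbf{1}[e \prec e_H,\, e_H \neq e_{H'}].
\]
Comparing this with the four cases in Proposition \ref{Proposition.van-der-Put-transform-of-special-rational-function-evaluated-on-abitrary-arrow} (separating $e_H = e_{H'}$, $e \prec e_H$ only, $e \prec e_{H'}$ only, $e \prec$ both, $e \prec$ neither) shows that the right-hand side matches $P(\ell_{H,H'})(e)$ in every case. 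This is a finite case-check rather than a genuine obstacle; the real work was done in Proposition \ref{Proposition.van-der-Put-transform-of-special-rational-function-evaluated-on-abitrary-arrow}, which already packaged the type-$t$ information into the type-$1$ flag computation.
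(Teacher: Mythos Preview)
Your proof is correct and follows essentially the same approach as the paper: reduce to the elementary case $u=\ell_{H,H'}$ by linearity of (C) and multiplicativity of $P$, then observe that (C) for $\ell_{H,H'}$ is just the comparison of Corollary~\ref{Corollary.van-der-Put-transform-of-special-rational-function-evaluated-on-arbitrary-arrow-of-type-1} with Proposition~\ref{Proposition.van-der-Put-transform-of-special-rational-function-evaluated-on-abitrary-arrow}. Your version is more explicit (the concrete decomposition $u=\prod_{i\geq 2}\ell_{H_i,H_1}^{m_i}$ and the spelled-out case check), but the substance is identical.
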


\begin{proof}
	(C) is satisfied for $u = \ell_{H,H'} = \ell_{H}/\ell_{H'}$ by \ref{Proposition.van-der-Put-transform-of-special-rational-function-evaluated-on-abitrary-arrow}. The general case follows as condition (C) is linear 
	(it holds for $u \cdot u'$ if it holds for $u$ and $u'$) and
	$\prod \ell_{i}^{m_{i}}$ is a product of functions of type $\ell_{H,H'}$.
\end{proof}

\section{The van der Put sequence}

\begin{Proposition} \label{Proposition.Property-C-holds-for-invertible-holomorphic-functions-on-Omega}
	Let $u$ be an invertible holomorphic function on $\Omega$. Then its van der Put transform $P(u)$ satisfies condition $\mathrm{(C)}$ from Corollary \ref{Corollary.Condition-C}.
\end{Proposition}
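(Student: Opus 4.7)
My plan is to deduce condition (C) for a general invertible holomorphic $u$ from the rational-function case already handled in Corollary \ref{Corollary.Condition-C}, by comparing $u$ locally at $v$ with a rational function $u_{v}$ crafted to share its reduction at $v$.

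Fix $v \in \mathbf{V}(\mathcal{BT})$ with lattice representative $L$, and an arrow $e \in \mathbf{A}_{v,t}$. If $t=1$, then the only $e' \in \mathbf{A}_{v,1}$ with $e' \succ e$ is $e$ itself, so (C) holds trivially; assume $t \geq 2$. Scale $u$ so that $\lVert u \rVert_{v} = 1$. By \ref{subsubsection.Intuition-for-property-of-van-der-Put-transform}, the reduction $\overline{u}$ is a nowhere vanishing rational function on $\overline{\lambda^{-1}(v)} \cong \mathds{P}^{r-1}_{\mathds{F}} \setminus \bigcup_{e' \in \mathbf{A}_{v,1}} \overline{H}_{e'}$, and hence factors as $\overline{c} \prod_{e' \in \mathbf{A}_{v,1}} \overline{\ell}_{e'}^{\,m_{e'}}$ for some $\overline{c} \in \mathds{F}^{*}$, with $m_{e'} \defeq -P(u)(e')$ and $\sum_{e'} m_{e'} = 0$. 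Lifting each $\overline{\ell}_{e'}$ to a linear form $\ell_{e'}$ on $V$ with unimodular coefficient vector, set
\[
	u_{v} \defeq \prod_{e' \in \mathbf{A}_{v,1}} \ell_{e'}^{\,m_{e'}}.
\]
Because $\sum m_{e'} = 0$, this is a well-defined rational function on $\mathds{P}^{r-1}$; as the $\ell_{e'}$ do not vanish on $\Omega$, we get $u_{v} \in \mathcal{O}(\Omega)^{*}$.

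Corollary \ref{Corollary.Condition-C} applied to $u_{v}$ yields (C) for $P(u_{v})$, while the construction and Corollary \ref{Corollary.van-der-Put-transform-of-special-rational-function-evaluated-on-arbitrary-arrow-of-type-1} give $P(u_{v})(e') = -m_{e'} = P(u)(e')$ for every $e' \in \mathbf{A}_{v,1}$. Consequently
\[
	P(u_{v})(e) \;=\; \sum_{\substack{e' \in \mathbf{A}_{v,1} \\ e \prec e'}} P(u_{v})(e') \;=\; \sum_{\substack{e' \in \mathbf{A}_{v,1} \\ e \prec e'}} P(u)(e'),
\]
which identifies the right-hand side of (C) for $u$ at $e$ with $P(u_{v})(e)$. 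Therefore proving (C) for $u$ reduces to the equality $P(u)(e) = P(u_{v})(e)$, i.e.\ to $P(f)(e) = 0$ for the quotient $f \defeq u/u_{v}$. The function $f$ is invertible holomorphic on $\Omega$, reduces at $v$ to the constant $\overline{c}$, and by construction already satisfies $P(f)(e') = 0$ for every $e' \in \mathbf{A}_{v,1}$.

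The technical heart of the proof is promoting this vanishing from type-$1$ arrows to the type-$t$ arrow $e$; this is where I expect the main difficulty. My approach would be a chain argument: choose a complete flag $L/\pi L = \overline{M}_{0} \supsetneq \overline{M}_{1} \supsetneq \cdots \supsetneq \overline{M}_{t} = \overline{M}_{e}$ with corresponding lattices $L = L_{0} \supsetneq L_{1} \supsetneq \cdots \supsetneq L_{t} = M$ and vertices $v = v_{0}, v_{1}, \ldots, v_{t} = t(e)$ lying in a common simplex of $\mathrm{st}(v)$, so that $P(f)(e) = \sum_{i=1}^{t} P(f)(e_{i})$ along the type-$1$ arrows $e_{i} \defeq (v_{i-1}, v_{i})$. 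One then proves inductively that each $P(f)(e_{i}) = 0$ by invoking the compatibility of the canonical reductions of the affinoids $\lambda^{-1}(\sigma)$ attached to the sub-simplices $\sigma = \{v_{0}, \ldots, v_{i}\}$: the constancy of $\overline{f}$ on $\overline{\lambda^{-1}(v_{0})}$ transfers, via the gluing of boundary strata in $\overline{\lambda^{-1}(\sigma)}$, into the statement that the reduction $\overline{f}_{v_{i-1}}$ has vanishing order zero along the specific boundary hyperplane $\overline{H}_{e_{i}}$ where it is glued to the already-handled reduction at an earlier vertex. This reduction-compatibility step is essentially a local computation inside the simplex affinoid and is the part that will require the most care.
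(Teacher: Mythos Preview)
Your reduction to the quotient $f = u/u_{v}$ is essentially the paper's own move: the paper multiplies $u$ by finitely many $\ell_{H,H'}$ so that the modified function has $P(\,\cdot\,)(e') = 0$ for the type-$1$ arrows $e'$ dominating $e$, which is the same as arranging that the reduction $\overline{f}$ at $v$ has no zero or pole along the hyperplanes $\overline{H}$ containing $\overline{M}_{e}$. So up to this point you are on the right track.

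The gap is in your proposed endgame. Your chain identity $P(f)(e) = \sum_{i=1}^{t} P(f)(e_{i})$ is correct, and $P(f)(e_{1}) = 0$ is immediate, but the inductive step for $i \geq 2$ does not go through as you describe. The boundary hyperplane $\overline{H}_{e_{i}}$ at $v_{i-1}$ is the stratum along which the $v_{i-1}$-component glues to the \emph{next} vertex $v_{i}$, not to any ``already-handled'' earlier vertex; the gluing between $v_{i-1}$ and $v_{0}$ (or $v_{i-2}$) takes place along a completely different stratum, namely the codimension-$(r-1)$ locus corresponding to the type-$(r-1)$ arrow $(v_{i-1}, v_{i-2})$. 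The constancy of $\overline{f}$ at $v_{0}$ therefore only pins down the value of $\overline{f}_{v_{1}}$ at a single point (the blown-up point for $\overline{e_{1}}$), and says nothing about its vanishing order along $\overline{H}_{e_{2}}$. In short, the information you have at $v_{0}$ does not propagate along the flag in the way your gluing sketch suggests.

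The paper bypasses the chain entirely with a single direct observation: if $P(f)(e) < 0$ (resp.\ $>0$), then $\lvert f \rvert$ strictly decays along $e$, and this forces the reduction $\overline{f}$ (resp.\ $\overline{f}^{-1}$) at $v$ to vanish identically along the codimension-$t$ linear subvariety $\overline{M}_{e} \subset \mathds{P}^{r-1}_{\mathds{F}}$. This is the higher-codimension analogue of the type-$1$ statement in \ref{subsubsection.Intuition-for-property-of-van-der-Put-transform}, and it follows from the structure of the canonical reduction of $\lambda^{-1}$ of the closed edge $\{v, t(e)\}$. Once you have it, you are done in one line: your $\overline{f}$ is a nonzero constant (or, in the paper's slightly weaker setup, equals $c \prod_{\overline{H}} \ell_{\overline{H}}^{m(\overline{H})}$ with $m(\overline{H}) = 0$ whenever $\overline{M}_{e} \subset \overline{H}$), hence restricts to a well-defined nonzero rational function on $\overline{M}_{e}$, and neither $\overline{f}$ nor $\overline{f}^{-1}$ can vanish along $\overline{M}_{e}$. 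So $P(f)(e) = 0$.
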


\begin{proof}
	Again by \eqref{Eq.van-der-put-transform-group-action-property} we may suppose that the origin $o(e)$ of the arrow in question is equal to $v_{0} = [L_{0}]$. So $e = e_{\bar{M}}$ with some non-trivial 
	$\mathds{F}$-subspace $\bar{M}$ of $L_{0}/\pi L_{0}$. As in \ref{subsection.Rational-function-corresponding-to-two-hyperplanes} we use the same letter $\bar{M}$ for the corresponding linear 
	subvariety of $\mathds{P}^{r-1}/\mathds{F}$ of 
	dimension $t-1$, where $t = \mathrm{type}(e) = \dim \bar{M}$.
	
	Multiplying $u$ by suitable functions of type $\ell_{H,H'}$ (which doesn't alter the (non)-validity of (C) for $u$), we may assume that $P(u)(e') = 0$ for all $e' \in \mathbf{A}_{v_{0},1}$ dominated by $e$. Then we 
	must show that $P(u)(e) = 0$, too. Let $u$ be normalized such that $\lVert u \rVert_{v_{0}} = 1$, and let $\bar{u}$ be its reduction as a rational function on $\mathds{P}^{r-1}/\mathds{F}$, see 
	\eqref{Eq.Canonical-reduction-of-preimage-of-v0-under-lambda}.
	
	If $P(u)(e) < 0$ then $\lvert u \rvert$ decays along $e = e_{\bar{M}}$ and $\bar{u}$ vanishes along $\bar{M}$. Correspondingly, if $P(u)(e) > 0$ then $(\bar{u})^{-1} = (\bar{u^{-1}})$ vanishes along $\bar{M}$. Hence it
	suffices to show that, under our assumptions, $\bar{u}$ restricts to a well-defined rational function on $\bar{M}$, i.e., $\bar{M}$ is neither contained in the vanishing locus $V(\bar{u})$ nor in $V(\bar{u}^{-1})$. But the
	latter is obvious: With a suitable constant $c \neq 0$ we have 
	\[
		\bar{u} = c \cdot \prod \ell_{\bar{H}}^{m(\bar{H})},
	\]
	where $\bar{H}$ runs through the boundary components of $\bar{\Omega}_{v_{0}}$ as in \eqref{Eq.Canonical-reduction-of-preimage-of-v0-under-lambda}, $\ell_{\bar{H}}$ is a linear form 
	vanishing on $\bar{H}$, $\sum m(\bar{H}) = 0$, and $m(\bar{H}) = -P(u)(e_{\bar{H}}^{\perp}) = 0$ if $\bar{H}^{\perp} \subset \bar{M}$. Hence neither the rational function $\bar{u}$ nor its 
	reciprocal vanishes identically on $\bar{M}$.
\end{proof}

\subsection{}\label{subsection.Conditions-for-varphi-in-A(BT)} The proposition motivates the following definition. Let $A$ be any additively written abelian group. The group of 
\textbf{$\boldsymbol{A}$-valued harmonic 1-cochains} $\mathbf{H}(\mathcal{BT}, A)$ is the group of maps $\varphi \colon \mathbf{A}(\mathcal{BT}) \to A$ that satisfy
\begin{enumerate}[label=(\Alph*)]
	\item $\sum \varphi(e) = 0$, whenever $e$ ranges through the arrows of a closed path in $\mathcal{BT}$;
	\item for each type $t$, $1 \leq t \leq r-1$, and each $v \in V(\mathcal{BT})$, the condition 
	\par\noindent\begin{minipage}{\linewidth}
		\begin{equation}
			\sum_{e \in \mathbf{A}_{v,t}} \varphi(e) = 0 \quad \text{holds}; \tag{$\mathrm{B}_{t}$}
		\end{equation}
	\end{minipage}
	\item for each $v \in \mathbf{V}(\mathcal{BT})$ and each $e \in \mathbf{A}_{v}$,
	\[
		\sum_{e' \in \mathbf{A}_{v,1}, ~e' \prec e} \varphi(e') = \varphi(e).
	\]
\end{enumerate}

\begin{Remark} \label{Remark.Z-valued-harmonic-cochains}
	\begin{enumerate}[wide=15pt, label=(\roman*)]
		\item In the case where the coefficient group $A$ equals $\mathds{Z}$, condition (A) is \eqref{Eq.van-der-put-transform-easy-property-2}, $(\mathrm{B}_{1})$ is \eqref{Eq.Property-van-der-Put-transform}, and
		(C) is the condition dealt with in \ref{Corollary.Condition-C} and \ref{Proposition.Property-C-holds-for-invertible-holomorphic-functions-on-Omega}. (A) in particular implies that $\varphi$ is alternating, 
		i.e., $\varphi(\bar{e}) = -\varphi(e)$.
		Further, $(\mathrm{B}_{1})$ together with (C) implies $(\mathrm{B}_{t})$ for all types $t$, as 
		\[
			\sum_{e \in \mathbf{A}_{v,t}} \varphi(e) = \sum_{e' \in \mathbf{A}_{v,1}} \varphi(e') \# \{ e \in \mathbf{A}_{v,t} \mid e' \prec e \},
		\]
		where $\# \{\, \cdots \}$, the cardinality of some finite Grassmannian, is independent of $e'$.
		\item Note that the current $\mathbf{H}(\mathcal{BT}, \mathds{Z})$ differs from the group defined in \cite{Gekeler17}, as condition (C) is absent there.
		\item Proposition \ref{Proposition.Property-C-holds-for-invertible-holomorphic-functions-on-Omega} together with the preceding considerations shows that
		\begin{align*}
			P \colon \mathcal{O}(\Omega)^{*} 	&\longrightarrow \mathbf{H}(\mathcal{BT}, \mathds{Z}) \\
															u		&\longmapsto P(u)
		\end{align*}
		is well-defined. Its kernel consists of the invertible holomorphic functions $f$ on $\Omega$ with constant absolute value, which equals the constants $C^{*}$, as will be shown in 
		\ref{Proposition.Holomorphic-function-on-Omega-constant}. Hence, by \eqref{Eq.van-der-put-transform-group-action-property}, we have the exact sequence of $G(K)$-modules
		\begin{equation*}
			\begin{tikzcd}
				1 \ar[r] &C^{*} \ar[r]	&\mathcal{O}(\Omega)^{*} \ar[r, "P"] &\mathbf{H}(\mathcal{BT}, \mathds{Z}).
			\end{tikzcd}
		\end{equation*}
		In fact, we will show that $P$ is also surjective, which yields our principal result Theorem \ref{Theorem.van-der-Put-sequence-short-exact-sequence-of-GK-modules}.
		\item Beyond the natural coefficient domains $A=\mathds{Z}$ or $\mathds{Q}$ for $\mathbf{H}(\mathcal{BT}, A)$, at least the torsion groups $A = \mathds{Z}/(N)$ deserve interest. For example, 
		in the case $r=2$ and $\mathrm{char}(C) = \mathrm{char}(\mathds{F}) = p$, the invariants $\mathbf{H}(\mathcal{BT}, \mathds{F}_{p})^{\Gamma}$ under an arithmetic subgroup $\Gamma \subset G(K)$ 
		differ in general from $\mathbf{H}(\mathcal{BT}, \mathds{Z})^{\Gamma} \otimes \mathds{F}_{p}$, see \cite{Gekeler96} Section 6. The coefficient rings $A = \mathds{Z}_{\ell}$ ($\ell$ a prime number) 
		and $A=K$ come into the game by relating $\mathbf{H}(\mathcal{BT}, \mathds{Z})$ with the first cohomology of $\Omega$, see Remark 5.5.
	\end{enumerate}
\end{Remark}

\begin{Proposition} \label{Proposition.Holomorphic-function-on-Omega-constant}
	Each bounded holomorphic function $f$ on $\Omega$ is constant. In particular, the kernel of the map $P$ equals the constants $C^{*}$.
\end{Proposition}

\begin{proof}
	Let $\boldsymbol{\omega} = (\omega_{1}:\ldots:\omega_{r})$ be an element of $\Omega$. We are going to show that $f$ as a function in $\omega_{i}$, where 
	$\omega_{1}, \dots, \omega_{i-1}, \omega_{i+1}, \dots, \omega_{r}$ are fixed, is constant for each $i$, which will give the result. Let
	\begin{align*}
		\alpha^{(i)}_{\boldsymbol{\omega}} \colon \mathds{P}^{r-1}(K)	&\longrightarrow \mathds{P}^{1}(C) \qquad \text{be the map} \\
								(x_{1}: \ldots: x_{r})											&\longmapsto \Big( \sum_{\substack{1 \leq j \leq r \\ i \neq j}} x_{j} \omega_{j}: x_{i} \Big).
	\end{align*}
	It is well-defined (since the $\omega_{j}$ are $K$-linearly independent) and continuous with respect to the non-archimedean topologies on both sides, whence its image 
	$\mathrm{im}(\alpha^{(i)}_{\boldsymbol{\omega}})$ is compact. Moreover, the complement 
	$\Omega^{(i)}_{\boldsymbol{\omega}} \defeq \mathds{P}^{1}(C) \setminus \mathrm{im}(\alpha^{(i)}_{\boldsymbol{\omega}})$ in $\mathds{P}^{1}(C)$ equals the set of those 
	$\omega \in C = \{ (\omega:1) \} \hookrightarrow \mathds{P}^{1}(C)$ which are eligible for $(\omega_{1}:\ldots: \omega_{i-1} : \omega : \omega_{i+1} : \ldots : \omega_{r})$ to lie in $\Omega$. Analytic 
	spaces of this shape are extensively discussed in \cite{FresnelVanDerPut04} Chapter II. Notably, their Proposition 2.7.9 states that bounded functions on $\Omega_{\boldsymbol{\omega}}^{(i)}$ are 
	constant as wanted.
\end{proof}

\subsection{} The strategy of proof of the surjectivity of $P$ will be to approximate a given $\varphi \in \mathbf{H}(\mathcal{BT}, \mathds{Z})$ by linear combinations of elements $P(u)$, where $u$ is a function 
of type $\ell_{H,H'}$, or a relative of it.

Given two hyperplanes $H \neq H'$ of $V$ and $n \in \mathds{N}_{0} = \{0,1,2,\dots\}$, define
\begin{equation} \label{Eq.Definition-of-relative-to-ell-H-H'}
	f_{H,H',n} \vcentcolon = 1 + \pi^{n} \ell_{H,H'}.
\end{equation}
Here $\ell_{H,H'} = \ell_{H}/\ell_{H'} = \ell_{\mathbf{y}}/\ell_{\mathbf{y}'}$, where $\mathbf{y}, \mathbf{y}' \in L_{0} \setminus \pi L_{0}$, $H = \mathrm{ker}(\ell_{\mathbf{y}})$, $H' = \mathrm{ker}(\ell_{\mathbf{y}'})$. Like $\ell_{H,H'}$, 
$f_{H,H',n}$ is a unit on $\Omega$. We denote by
\begin{equation}
	\mathcal{BT}(n) \subset \mathcal{BT}
\end{equation}
the full subcomplex with vertices $\mathbf{V}(\mathcal{BT}(n)) = \{ v \in \mathbf{V}(\mathcal{BT}) \mid d(v_{0}, v) \leq n\}$. Hence $\mathcal{BT}(0) = \{v_{0}\}$, $\mathcal{BT}(1) = \mathrm{st}(v_{0})$, etc. Further,
\begin{equation}
	\Omega(n) \defeq \lambda^{-1}(\mathcal{BT}(n)).
\end{equation}
Then $\Omega(n)$ is an admissible affinoid subspace of $\Omega$ and $\Omega = \bigcup_{n \geq 0} \Omega(n)$. (In \cite{SchneiderStuhler91} Section 1, Proposition 4, $\Omega(n)$ is called $\bar{\Omega}_{n}$,
and a system of generators of its affinoid algebra is constructed.)

\begin{Lemma} \label{Lemma.Properties-of-f-H-H'-n-on-Omega-n}
	For $n \in \mathds{N}_{0}$, the following hold on $\Omega(n)$:
	\begin{enumerate}[label=$\mathrm{(\roman*)}$]
		\item $\log \ell_{H,H'} \leq n$;
		\item $\lvert f_{H,H',n} \rvert = 1$.
	\end{enumerate}
\end{Lemma}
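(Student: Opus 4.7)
My plan is to compute $|\ell_{H,H'}|$ and $|f_{H,H',n}|$ pointwise on $\lambda^{-1}(v)$ for each vertex $v$ of $\mathcal{BT}(n)$ by passing to the Goldman--Iwahori norm attached to $v$, and then propagate the resulting inequalities/equalities from vertices to all of $\Omega(n)$ via the affineness statement in Theorem \ref{Theorem.Invertible-holomorphic-functions-are-affine-on-certain-sets}.

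To set up, I fix $v = [L] \in \mathbf{V}(\mathcal{BT}(n))$ and, using $d(v_{0},v)\leq n$, choose the representative $L$ so that $L_{0} \supset L \supset \pi^{n} L_{0}$. Let $\nu$ denote the norm on $V$ with unit ball $L$. For $\boldsymbol{\omega} \in \lambda^{-1}(v)$, the norm $\nu_{\boldsymbol{\omega}}$ is a scalar multiple of $\nu$, so the scalar cancels in
\[
|\ell_{H,H'}(\boldsymbol{\omega})| = \frac{\nu_{\boldsymbol{\omega}}(\mathbf{y})}{\nu_{\boldsymbol{\omega}}(\mathbf{y}')} = \frac{\nu(\mathbf{y})}{\nu(\mathbf{y}')}.
\]
For (i) I read off $\nu(\mathbf{y})$ and $\nu(\mathbf{y}')$ from the lattice inclusions: $\mathbf{y} \in L_{0} \subset \pi^{-n} L$ gives $\nu(\mathbf{y}) \leq q^{n}$, while $\mathbf{y}' \in L_{0} \setminus \pi L_{0}$ together with $\pi L \subset \pi L_{0}$ forces $\mathbf{y}' \notin \pi L$ and hence $\nu(\mathbf{y}') \geq 1$. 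Dividing yields $\log \ell_{H,H'} \leq n$ at every vertex of $\mathcal{BT}(n)$, and Theorem \ref{Theorem.Invertible-holomorphic-functions-are-affine-on-certain-sets} then extends the bound to every simplex of $\mathcal{BT}(n)$ by affine interpolation.

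For (ii) the starting point is the algebraic identity
\[
f_{H,H',n} = \frac{\ell_{\mathbf{y}'} + \pi^{n} \ell_{\mathbf{y}}}{\ell_{\mathbf{y}'}} = \frac{\ell_{\tilde{\mathbf{y}}}}{\ell_{\mathbf{y}'}}, \qquad \tilde{\mathbf{y}} \defeq \mathbf{y}' + \pi^{n} \mathbf{y},
\]
where $\tilde{\mathbf{y}} \in L_{0}$, and (since $n \geq 1$) $\tilde{\mathbf{y}} \equiv \mathbf{y}' \pmod{\pi^{n} L_{0}}$ also sits in $L_{0} \setminus \pi L_{0}$. So the recipe of the previous paragraph gives $|f_{H,H',n}(\boldsymbol{\omega})| = \nu(\tilde{\mathbf{y}})/\nu(\mathbf{y}')$, and the task reduces to showing $\nu(\tilde{\mathbf{y}}) = \nu(\mathbf{y}')$. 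The ultrametric inequality applied to $\tilde{\mathbf{y}} = \mathbf{y}' + \pi^{n}\mathbf{y}$, combined with $\nu(\pi^{n}\mathbf{y}) = q^{-n}\nu(\mathbf{y}) \leq 1 \leq \nu(\mathbf{y}')$, gives $\nu(\tilde{\mathbf{y}}) \leq \nu(\mathbf{y}')$ and furnishes equality at once whenever this last inequality is strict.

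The only delicate point---and the main obstacle I expect---is the borderline case $\nu(\mathbf{y}') = 1 = \nu(\pi^{n}\mathbf{y})$, where the ultrametric bound could a priori be strict. I rule it out by the lattice inclusion $\pi L + \pi^{n} L_{0} \subset \pi L_{0}$, which holds because $L \subset L_{0}$ and $n \geq 1$: if $\tilde{\mathbf{y}}$ lay in $\pi L$, then $\mathbf{y}' = \tilde{\mathbf{y}} - \pi^{n}\mathbf{y}$ would lie in $\pi L + \pi^{n} L_{0} \subset \pi L_{0}$, contradicting $\mathbf{y}' \in L_{0} \setminus \pi L_{0}$. This forces $\nu(\tilde{\mathbf{y}}) \geq 1$ and closes the loop. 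Once $|f_{H,H',n}| = 1$ is verified at every vertex of $\mathcal{BT}(n)$, a final invocation of Theorem \ref{Theorem.Invertible-holomorphic-functions-are-affine-on-certain-sets}---an affine function that vanishes at every vertex of a simplex vanishes identically on it---propagates the equality to the whole of $\Omega(n)$.
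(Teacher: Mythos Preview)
Your proof is correct, but it follows a different route from the paper's.

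For (i), the paper starts from the normalization $\lVert \ell_{H,H'}\rVert_{v_{0}}=1$ and then invokes Proposition~\ref{Proposition.van-der-Put-transform-of-special-rational-function-evaluated-on-abitrary-arrow} (the explicit values $P(\ell_{H,H'})(e)\in\{-1,0,1\}$) to bound the growth of $\log\lVert\ell_{H,H'}\rVert$ by~$1$ per step, whence $\leq n$ on $\mathcal{BT}(n)$. You instead read off the bound directly from the Goldman--Iwahori description: choosing $L$ with $L_{0}\supset L\supset\pi^{n}L_{0}$ and computing $\nu(\mathbf{y})/\nu(\mathbf{y}')$ via the lattice inclusions. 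Your argument is more self-contained---it does not appeal to the prior computation of $P(\ell_{H,H'})$---while the paper's is shorter given that Proposition~\ref{Proposition.van-der-Put-transform-of-special-rational-function-evaluated-on-abitrary-arrow} is already in hand.

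For (ii) the difference is more pronounced. The paper argues that $\lvert 1+\pi^{n}\ell_{H,H'}\rvert=1$ holds wherever $\log\ell_{H,H'}<n$ (so on $\Omega(n-1)$ and on the open edges leading out to level~$n$), and then \emph{forces} the equality at the boundary vertices $v$ with $d(v_{0},v)=n$ by the affineness of $\log\lvert f_{H,H',n}\rvert$ along an arrow $(v',v)$ from level $n-1$: an affine function that is identically~$0$ on a half-open segment must vanish at the endpoint. You avoid this interpolation trick entirely by rewriting $f_{H,H',n}=\ell_{\tilde{\mathbf{y}}}/\ell_{\mathbf{y}'}$ with $\tilde{\mathbf{y}}=\mathbf{y}'+\pi^{n}\mathbf{y}$ and checking $\nu(\tilde{\mathbf{y}})=\nu(\mathbf{y}')$ directly; the key lattice inclusion $\pi L+\pi^{n}L_{0}\subset\pi L_{0}$ handles the borderline case cleanly. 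Your approach is more uniform (no case distinction between interior and boundary vertices) and slightly more elementary; the paper's is a nice illustration of how affineness can be used to push identities from an open set to its closure.
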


\begin{proof} \let\qed\relax
	\begin{enumerate}[wide=15pt, label=(\roman*)]
		\item By our normalization, $\lvert \ell_{H,H'}(\boldsymbol{\omega}) \rvert = 1$ for $\boldsymbol{\omega} \in \lambda^{-1}(v_{0})$. Then by 
		\ref{Proposition.van-der-Put-transform-of-special-rational-function-evaluated-on-abitrary-arrow},
		$\lVert \ell_{H,H'}(\boldsymbol{\omega}) \rVert_{v} \leq q^{n}$ for $v \in \mathbf{V}(\mathcal{BT})$ whenever $d(v_{0}, v) \leq n$, which gives the assertion.
		\item $\lvert f_{H,H',n}(\boldsymbol{\omega}) \rvert = \lvert 1 + \pi^{n} \ell_{H,H'}(\boldsymbol{\omega}) \rvert \leq 1$ on $\Omega(n)$ by (i), with equality at least if $n=0$ or $\boldsymbol{\omega}$ 
		doesn't belong to $\lambda^{-1}(v)$, where $v$ is a vertex with $d(v_{0}, v) = n$, since in this case $\log \ell_{H,H'}(\boldsymbol{\omega}) < n$. But the equality must also hold for $\boldsymbol{\omega}$ 
		with $\lambda(\boldsymbol{\omega}) =$ such a $v$, due to the linear interpolation property \ref{Theorem.Invertible-holomorphic-functions-are-affine-on-certain-sets} of $\log_{q} \lVert f_{H,H',n} \rVert_{x}$ 
		for $x$ belonging to an arrow $e = (v',v)$ with $d(v_{0}, v') = n-1$. \hfill \mbox{$\square$}
	\end{enumerate}
\end{proof}

\begin{Definition}
	A vertex $v \in \mathbf{V}(\mathcal{BT})$ is called \textbf{$\boldsymbol{n}$-special} ($n \in \mathds{N}_{0}$) if there exists a (necessarily uniquely determined) path $(v_{0}, v_{1}, \dots, v_{n} = v) \in \mathbf{A}_{v_{0},1,n}$,
	i.e., the arrows $e_{i} = (v_{i-1}, v_{i})$, $i = 1,2, \dots, n$ all have type 1, and $d(v_{0}, v) = n$. (By definition, $v_{0}$ is $0$-special.) An arrow $e \in \mathbf{A}(\mathcal{BT})$ is \textbf{$\boldsymbol{n}$-special} $(n \in \mathds{N})$ if
	$o(e)$ is $(n-1)$-special and $t(e)$ is $n$-special, that is, if it appears as some $e_{n}$ as above. Also, the path $(v_{0}, \dots, v_{n}) = (e_{1}, \dots, e_{n})$ is called \textbf{$\boldsymbol{n}$-special}. An arrow $e$ with 
	$d(v_{0}, o(e)) = n$ is \textbf{inbound} (of level $n$) if it belongs to $\mathcal{BT}(n)$, and \textbf{outbound} otherwise. That is, $e$ is inbound $\Leftrightarrow d(v_{0}, t(e)) \leq n$.
\end{Definition}

\subsection{} \label{subsection.Description-of-P(f_H_H'_n)-on-n+1-special-arrows} 
Next, we describe the restriction of $P(f_{H,H',n})$ to $(n+1)$-special arrows $e$. Let $n \in \mathds{N}_{0}$, and choose hyperplanes $H,H'$ of $V$, given as 
$H = \mathrm{ker}(\ell_{\mathbf{y}})$, $H' = \mathrm{ker}(\ell_{\mathbf{y}'})$ as in \eqref{Eq.Definition-of-relative-to-ell-H-H'}, $G = H^{\perp} = K\mathbf{y}$, $G' = K\mathbf{y}'$. 
Assume that $\mathbf{y}$ and $\mathbf{y}'$ are not 1-equivalent \eqref{Eq.n-equivalence-of-vectors}, that is, $\tau_{G}(v_{0}) \neq \tau_{G'}(v_{0})$.

\begin{enumerate}[wide=15pt, label=(\roman*)]
	\item According to Corollary \ref{Corollary.van-der-Put-transform-of-special-rational-function-evaluated-on-arbitrary-arrow-of-type-1}, $\ell_{H,H'} = \ell_{\mathbf{y}}/\ell_{\mathbf{y}'}$ has the property 
	that $\log \ell_{H,H'}$ grows by 1 in each step of the $(n+1)$-special path
	\begin{equation} \label{Eq.special-n+1-special-path}
		(v_{0}, v_{1}, \dots, v_{n}, v_{n+1}) = (e_{1}, e_{2}, \dots, e_{n+1})
	\end{equation}
	from $v_{0}$ toward $G'$. Together with \ref{Lemma.Properties-of-f-H-H'-n-on-Omega-n} (ii), this implies that $P(f_{H,H',n})(e_{n+1}) = 1$.
	\item On the other hand, again by Corollary \ref{Corollary.van-der-Put-transform-of-special-rational-function-evaluated-on-arbitrary-arrow-of-type-1}, $\log \ell_{H,H'} < n$
	on $\lambda^{-1}(v)$ for each $n$-special $v$ different from $v_{n}$. By a variation of the linear interpolation argument in the proof of \ref{Lemma.Properties-of-f-H-H'-n-on-Omega-n} (ii), $P(f_{H,H',n})(e) = 0$ for each $(n+1)$-special
	arrow $e$ with $o(e) \neq v_{n}$.
	\item The function $u \defeq f_{H,H',n} = (\ell_{\mathbf{y}'} + \pi^{n}\ell_{\mathbf{y}})/\ell_{\mathbf{y}'}$ satisfies $\lVert u \rVert_{v_{n}} = 1$. Its reduction $\bar{u}$ as a rational function on the reduction
	\begin{equation} \label{Eq.Preimage-of-special-vector-under-affinoid-lambda}
		\bar{\Omega}_{v_{n}} = (\mathds{P}(L^{\wedge}/\pi L^{\wedge})/\mathds{F}) \setminus \bigcup \bar{H} \qquad \text{(see \eqref{Eq.Description-for-Omega-bar-v}, here $v_{n} = [L]$)}
	\end{equation}
	of $\Omega_{v_{n}} = \lambda^{-1}(v_{n})$ has a simple pole along the hyperplane $\bar{H}_{e_{n+1}}$ corresponding to the arrow $e_{n+1}$, a simple zero along a unique $\bar{H}_{e}$, where 
	$e = (v_{n}, w)$, and neither zeroes nor poles along the other hyperplanes that appear in \eqref{Eq.Preimage-of-special-vector-under-affinoid-lambda}. The hyperplane $\bar{H}_{e}$ is the vanishing 
	locus in $\mathds{P}(L^{\wedge}/\pi L^{\wedge})/\mathds{F}$ of the
	reduction of the form $\ell_{\mathbf{y}'} + \pi^{n} \ell_{\mathbf{y}} = \ell_{\mathbf{y}''}$; accordingly, $w = \tau_{G''}(v_{n})$, where $G'' = K\mathbf{y}''$ and
	\begin{equation} \label{Eq.Description-for-y''}
		\mathbf{y}'' = \mathbf{y}' + \pi^{n} \mathbf{y}.
	\end{equation}
	\item If $\mathbf{y}'$ is fixed and $\mathbf{y}$ runs through the elements of $L_{0} \setminus \pi L_{0}$ not 1-equivalent with $\mathbf{y}'$, then the corresponding $\mathbf{y}''$ are $n$-equivalent but 
	not $(n+1)$-equivalent with $\mathbf{y}'$ (cf. \eqref{Eq.n-equivalence-of-vectors}). In this way we get all the $(n+1)$-classes with this property, that is, all the $(n+1)$-special paths 
	$(e_{1}, e_{2}, \dots, e_{n}, e)$ which agree with the path $(e_{1}, \dots, e_{n}, e_{n+1})$ of \eqref{Eq.special-n+1-special-path} except for the last arrow. We collect what has been shown.
\end{enumerate}

\begin{Proposition} \label{Proposition.Hyperplanes}
	\begin{enumerate}[label=$\mathrm{(\roman*)}$]
		\item Let $H, H'$ be two hyperplanes in $V$, $G = H^{\perp}$, $G' = H'^{\perp}$, with $\tau_{G}(v_{0}) \neq \tau_{G'}(v_{0})$ and $n \in \mathds{N}_{0}$. Put $v_{i} \vcentcolon = (\tau_{G'})^{i}(v_{0})$. If $e$ is an 
		$(n+1)$-special arrow then
		\par\noindent\begin{minipage}{\linewidth}		
		\begin{equation}
			P(f_{H,H',n})(e) = \begin{cases} +1, &\text{if } e=(v_{n}, v_{n+1}), \\ -1, &\text{if }e =(v_{n}, w), \\ \hphantom{+}0, &\text{otherwise.} \end{cases}
		\end{equation}
		\vspace*{2pt}
		\end{minipage}
		Here $w = \tau_{G''}(v_{n}) \neq v_{n+1}$, where $G''= K\mathbf{y}''$ with $\mathbf{y}'' = \mathbf{y}' + \pi^{n} \mathbf{y}$
		as described in \ref{subsection.Description-of-P(f_H_H'_n)-on-n+1-special-arrows}, notably in \eqref{Eq.Description-for-y''}.
		\item If $H'$ is fixed, each $(n+1)$-special arrow $e \neq (v_{n}, v_{n+1})$ with $o(e) = v_{n}$ occurs through a suitable choice of $H$ as the arrow $e = (v,w)$ where $P(f_{H,H',n})$ evaluates to $-1$. 
		\hfill \mbox{$\square$}
	\end{enumerate}
\end{Proposition}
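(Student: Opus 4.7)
The plan is to formalize the four observations sketched in subsection \ref{subsection.Description-of-P(f_H_H'_n)-on-n+1-special-arrows} by partitioning the $(n+1)$-special arrows $e$ into three classes and handling each with a distinct technique: (A) the forward arrow $(v_n, v_{n+1})$, via a direct telescoping computation along the $H'$-path; (B) arrows with $o(e) \neq v_n$, via affine interpolation on $\lambda^{-1}(e)$; (C) the remaining type-$1$ arrows at $v_n$, via reduction at $v_n$ after rewriting $f_{H,H',n}$ as a ratio of linear forms. Part (ii) will then follow from the Grassmannian bookkeeping of subsubsection \ref{subsubsection.Identification-of-arrows-with-special-Grassmannians}.

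For class (A), I iterate Corollary \ref{Corollary.van-der-Put-transform-of-special-rational-function-evaluated-on-arbitrary-arrow-of-type-1} along $(v_0, v_1, \dots, v_{n+1})$: each arrow $(v_{i-1}, v_i) = (v_{i-1}, \tau_{H'}(v_{i-1}))$ points to $H'$ but not to $H$ (the hypothesis $\tau_H(v_0) \neq \tau_{H'}(v_0)$ propagates along the $H'$-path), contributing $+1$ to $P(\ell_{H,H'})$. This yields $\log \lVert \pi^n \ell_{H,H'}\rVert_{v_n} = 0$ and $= 1$ at $v_{n+1}$; combining with Lemma \ref{Lemma.Properties-of-f-H-H'-n-on-Omega-n}(ii) gives $P(f_{H,H',n})(v_n, v_{n+1}) = 1$. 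For class (B), the $(n+1)$-special path from $v_0$ to $t(e)$ must diverge from the $H'$-path at some step; iterating Corollary \ref{Corollary.van-der-Put-transform-of-special-rational-function-evaluated-on-arbitrary-arrow-of-type-1} bounds $\log \lVert \ell_{H,H'}\rVert_{o(e)} \leq n - 1$ and $\log \lVert \ell_{H,H'}\rVert_{t(e)} \leq n$, so by Theorem \ref{Theorem.Invertible-holomorphic-functions-are-affine-on-certain-sets} applied to $\ell_{H,H'}$ we have $\lvert \pi^n \ell_{H,H'}\rvert < 1$ strictly inside $\lambda^{-1}(e)$, hence $\lvert f_{H,H',n}\rvert = 1$ there; a linear-interpolation argument as in the proof of Lemma \ref{Lemma.Properties-of-f-H-H'-n-on-Omega-n}(ii) extends this to the endpoint, giving $P(f_{H,H',n})(e) = 0$.

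For class (C), rewrite $f_{H,H',n} = \ell_{\mathbf{y}''}/\ell_{\mathbf{y}'} = \ell_{H'', H'}$ with $\mathbf{y}'' = \mathbf{y}' + \pi^n \mathbf{y}$ and $H'' = \ker \ell_{\mathbf{y}''}$, and apply Corollary \ref{Corollary.van-der-Put-transform-of-special-rational-function-evaluated-on-arbitrary-arrow-of-type-1} at the vertex $v_n$ (justified by $\mathrm{G}(K)$-equivariance). The non-degeneracy $\tau_{H''}(v_n) \neq \tau_{H'}(v_n)$ is equivalent, via the identification of subsubsection \ref{subsubsection.Identification-of-arrows-with-special-Grassmannians}, to the $(n+1)$-inequivalence of $\mathbf{y}''$ and $\mathbf{y}'$; since $\mathbf{y}'' - \mathbf{y}' = \pi^n \mathbf{y}$ with $\mathbf{y}$ not $1$-equivalent to $\mathbf{y}'$, this holds. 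The Corollary then yields $-1$ at $(v_n, \tau_{H''}(v_n)) = (v_n, w)$, $+1$ at $(v_n, \tau_{H'}(v_n)) = (v_n, v_{n+1})$ (consistent with (A)), and $0$ on all other type-$1$ arrows from $v_n$. For Part (ii), the same Grassmannian identification shows that, as $\mathbf{y}$ runs over the $1$-classes in $L_0 \setminus \pi L_0$ distinct from that of $\mathbf{y}'$, the vector $\mathbf{y}''$ runs over all $(n+1)$-classes $n$- but not $(n+1)$-equivalent to $\mathbf{y}'$, which by the bijection bijectively index the $(n+1)$-special arrows from $v_n$ distinct from $(v_n, v_{n+1})$. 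The principal technical obstacle is precisely this bookkeeping between $n$- and $(n+1)$-equivalence classes of vectors and the corresponding arrows in $\mathcal{BT}$, both when transferring Corollary \ref{Corollary.van-der-Put-transform-of-special-rational-function-evaluated-on-arbitrary-arrow-of-type-1} from $v_0$ to $v_n$ and when parametrizing the choices of $H$ in Part (ii).
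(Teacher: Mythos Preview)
Your proposal is correct and follows the same route as the paper's argument in subsection~\ref{subsection.Description-of-P(f_H_H'_n)-on-n+1-special-arrows}: your classes (A), (B), (C) correspond to the paper's items (i), (ii), (iii), and your treatment of Part~(ii) matches item~(iv). Your handling of class~(C)---recognizing $f_{H,H',n} = \ell_{\mathbf{y}''}/\ell_{\mathbf{y}'} = \ell_{H'',H'}$ globally and invoking Corollary~\ref{Corollary.van-der-Put-transform-of-special-rational-function-evaluated-on-arbitrary-arrow-of-type-1} directly at $v_n$---is a mild streamlining of the paper's explicit reduction computation at $v_n$, but the underlying mechanism is the same.
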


The next result, technical in nature, is crucial for the proof of Theorem \ref{Theorem.van-der-Put-sequence-short-exact-sequence-of-GK-modules}. Its proof is postponed to the next section.

\begin{Proposition} \label{Proposition.Crucial-technical-tool-for-main-theorem}
	Let $n \in \mathds{N}_{0}$ and $\varphi \in \mathbf{H}(\mathcal{BT}, \mathds{Z})$ be such that $\varphi(e) = 0$ for arrows $e$ that either belong to $\mathcal{BT}(n)$ or are $(n+1)$-special. Then 
	$\varphi(e) = 0$ for all arrows $e$ of $\mathcal{BT}(n+1)$.
\end{Proposition}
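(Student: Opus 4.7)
My plan is to reformulate the statement via a vertex potential and then propagate vanishing through the harmonicity conditions of the building. Since $\mathcal{BT}$ is contractible, condition (A) applied to closed paths in the 1-skeleton implies that $\varphi$ is a coboundary: there exists a function $f \colon \mathbf{V}(\mathcal{BT}) \to \mathds{Z}$, unique up to an additive constant, with $\varphi(e) = f(t(e)) - f(o(e))$. Normalizing $f(v_{0}) = 0$ and using connectedness of $\mathcal{BT}(n)$, the vanishing of $\varphi$ on arrows of $\mathcal{BT}(n)$ forces $f \equiv 0$ on $\mathbf{V}(\mathcal{BT}(n))$. Telescoping $\varphi = 0$ along each $(n{+}1)$-special path from $v_0$ then yields $f(v) = 0$ at every $(n{+}1)$-special vertex. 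The proposition is thus equivalent to the vanishing of $f$ on every vertex $v$ with $d(v_{0},v) = n+1$.

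The remaining conditions $(B_{t})$ and (C) translate to mean-value and dominance-type relations on $f$ at every vertex: $(B_{t})$ at $u$ becomes $\sum_{e \in \mathbf{A}_{u,t}} f(t(e)) = |\mathbf{A}_{u,t}|\, f(u)$, and (C), combined with $(B_1)$, determines $f$-values on type-$t$ neighbors of $u$ from those on type-$1$ neighbors. A first observation is that at any $n$-special vertex $v_n = \tau_H^n(v_0)$ every outward type-$1$ arrow lands on an $(n{+}1)$-special vertex, so $(B_1)$ at $v_n$ is automatically satisfied and no new information is gained there; the essential constraints therefore come from the non-$n$-special distance-$n$ vertices. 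I would fix a target vertex $v$ with $d(v_0, v)=n+1$ and choose an apartment $\mathbf{A}$ containing both $v_0$ and $v$; in coordinates $(a_1, \dots, a_r)$ (modulo a simultaneous shift) adapted to $\mathbf{A}$ one has $v_0 = \boldsymbol{0}$ and $d(v_0, [L]) = \max_i a_i - \min_i a_i$. After normalizing $\min_i a_i = 0$, the distance-$(n{+}1)$ sphere in $\mathbf{A}$ consists of tuples with $\max_i a_i = n+1$, while the $(n{+}1)$-special vertices of $\mathbf{A}$ are exactly the $r$ corners $(n+1)\mathbf{e}_j$ corresponding to the coordinate hyperplanes of $V$.

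The substantive step, which I expect to be the main obstacle, is a combinatorial induction across the distance-$(n{+}1)$ sphere. I would induct on a Weyl-type invariant of $v$ — for instance the number of coordinates of $v$ different from both $0$ and $n+1$, or equivalently the position of $v$ in a Weyl-stratification of the sphere — with base case the $r$ corners, handled by hypothesis. For the inductive step, at a suitably chosen distance-$n$ pivot vertex $u$ adjacent to $v$, the relation $(B_1)$ at $u$ (after using (C) to rewrite higher-type contributions in terms of type-$1$ ones) splits into terms lying in $\mathcal{BT}(n)$ where $f=0$, terms on $(n{+}1)$-special vertices where $f=0$, and terms on distance-$(n{+}1)$ vertices of strictly smaller invariant, isolating $f(v)$ as the only unknown. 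The obstacle is that a single apartment does not in general furnish enough independent relations: one must bring in harmonicity conditions at vertices \emph{outside} $\mathbf{A}$ and choose the pivots $u$ and the ordering of the induction so that every non-$(n{+}1)$-special distance-$(n{+}1)$ vertex is ultimately pinned down. This is a genuine combinatorial-geometric problem about the link structure of the building and the sphere of radius $n+1$ around $v_0$, which is presumably why the authors set aside an entire subsequent section for the argument.
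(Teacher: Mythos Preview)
Your potential-function reformulation is correct, and the reduction to showing $f(v)=0$ for every $v$ with $d(v_0,v)=n+1$ is fine. But the inductive step you sketch has a real gap: invoking $(B_1)$ at a distance-$n$ pivot $u$ adjacent to $v$ produces one linear relation involving \emph{all} type-1 neighbors of $u$, and there is no reason why all but one of the distance-$(n{+}1)$ neighbors of $u$ should be either $(n{+}1)$-special or of strictly smaller invariant than $v$. You correctly flag this as ``the main obstacle'' and observe that a single apartment does not supply enough relations, but you do not actually resolve it; the proposal stops precisely where the work begins.

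The paper's argument sidesteps this linear-algebra trap. Instead of inducting on an invariant of the target distance-$(n{+}1)$ vertex, it inducts on the distance $s(u)$ from a distance-$n$ vertex $u$ to the nearest $n$-special vertex. The crucial observation is that once $\varphi$ vanishes on \emph{all} arrows of $\mathbf{A}_{u,1}$, conditions (C) and then (A) force $\varphi$ to vanish on the whole star $\mathrm{st}(u)$. So one never tries to isolate a single unknown $f(v)$; one instead proves the purely geometric statement: every outbound type-1 arrow $e$ of level $n$ with $s(o(e))=s>0$ lies in $\mathrm{st}(\tilde v)$ for some distance-$n$ vertex $\tilde v$ with $s(\tilde v)<s$. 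This is verified by an explicit elementary-divisor construction --- write $o(e)=[L]$, $t(e)=[L']$ in a basis of $L_0$ adapted to the flag, and decrement the relevant exponents to produce $\tilde L$. The whole argument takes place inside one apartment and never requires solving a system of equations.
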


Now we are able to show (modulo \ref{Proposition.Crucial-technical-tool-for-main-theorem}) the principal result.

\begin{Theorem} \label{Theorem.van-der-Put-sequence-short-exact-sequence-of-GK-modules}
	The van der Put map $P \colon \mathcal{O}(\Omega)^{*} \to \mathbf{H}(\mathcal{BT}, \mathds{Z})$ is surjective, and so the sequence
	\begin{equation}
		\begin{tikzcd}
			1 \ar[r] & C^{*} \ar[r] &\mathcal{O}(\Omega)^{*} \ar[r] & \mathbf{H}(\mathcal{BT}, \mathds{Z}) \ar[r] &0 \tag{0.2}
		\end{tikzcd}
	\end{equation}
	is a short exact sequence of $G(K)$-modules.
\end{Theorem}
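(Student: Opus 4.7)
Modulo Proposition \ref{Proposition.Crucial-technical-tool-for-main-theorem}, the plan is as follows. Since the kernel of $P$ has been identified with $C^{*}$ and $\mathrm{G}(K)$-equivariance is already built in, only the surjectivity of $P$ remains. The strategy is to approximate a given $\varphi \in \mathbf{H}(\mathcal{BT}, \mathds{Z})$ by an infinite product of functions of type $f_{H,H',n} = 1 + \pi^{n} \ell_{H,H'}$, peeling off one combinatorial level of $\mathcal{BT}$ at each step and then passing to the limit using that $\Omega$ is Stein.

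I would proceed by induction on $n \geq 0$, constructing $u_{(n)} \in \mathcal{O}(\Omega)^{*}$ for which $\psi_{n} := \varphi - P(u_{(n)})$ vanishes on every arrow of $\mathcal{BT}(n)$. The base case is trivial since $\mathcal{BT}(0) = \{v_{0}\}$ has no arrows. For the inductive step, by Proposition \ref{Proposition.Crucial-technical-tool-for-main-theorem} it suffices to find $w_{n} \in \mathcal{O}(\Omega)^{*}$ such that $\psi_{n} - P(w_{n})$ vanishes both on $\mathcal{BT}(n)$ and on every $(n+1)$-special arrow; then $u_{(n+1)} := u_{(n)} \cdot w_{n}$ closes the step.

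For each of the finitely many $n$-special vertices $v_{n}$, choose $H'$ with $\tau_{H'}^{n}(v_{0}) = v_{n}$, set $e_{0} := (v_{n}, \tau_{H'}(v_{n}))$, and, for every other $(n+1)$-special arrow $e$ from $v_{n}$, use the explicit formula for $P(f_{H,H',n})$ on $(n+1)$-special arrows (the proposition immediately preceding \ref{Proposition.Crucial-technical-tool-for-main-theorem}) to select $H_{e}$ so that $P(f_{H_{e}, H', n})(e) = -1$, $P(f_{H_{e}, H', n})(e_{0}) = +1$, and $P(f_{H_{e}, H', n}) = 0$ on all other $(n+1)$-special arrows. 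The finite product
\[
    w_{v_{n}} := \prod_{e \neq e_{0}} f_{H_{e}, H', n}^{-\psi_{n}(e)}
\]
then satisfies $P(w_{v_{n}}) = \psi_{n}$ on every $(n+1)$-special arrow emanating from $v_{n}$: for $e \neq e_{0}$ this is immediate, and for $e_{0}$ it uses condition $(\mathrm{B}_{1})$ applied to $\psi_{n}$ at $v_{n}$, together with the inductive vanishing of $\psi_{n}$ on the type-1 arrows of $v_{n}$ lying in $\mathcal{BT}(n)$, forcing $\sum_{e} \psi_{n}(e) = 0$ over the $(n+1)$-special arrows from $v_{n}$. Since $P(w_{v_{n}})$ vanishes on $(n+1)$-special arrows based at any other $n$-special vertex, the finite product $w_{n} := \prod_{v_{n}} w_{v_{n}}$ satisfies $P(w_{n}) = \psi_{n}$ on every $(n+1)$-special arrow, while Lemma \ref{Lemma.Properties-of-f-H-H'-n-on-Omega-n}(ii) gives $\lvert w_{n} \rvert = 1$ on $\Omega(n)$, so $P(w_{n})$ vanishes on arrows of $\mathcal{BT}(n)$ as required.

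Finally, the limit $u := \lim_{n} u_{(n)}$ must be shown to exist in $\mathcal{O}(\Omega)^{*}$. Since $\Omega$ is Stein, it suffices to verify uniform convergence on each affinoid $\Omega(N)$; and on $\Omega(N)$, Lemma \ref{Lemma.Properties-of-f-H-H'-n-on-Omega-n}(i) gives $\lvert \pi^{n} \ell_{H,H'} \rvert \leq q^{N-n}$ for $n \geq N$, so each factor of $w_{n}$ lies within $q^{N-n}$ of $1$. Because $\lvert f^{c} - 1 \rvert \leq \lvert f - 1 \rvert$ holds for any integer $c$ whenever $\lvert f - 1 \rvert < 1$, raising to integer powers and taking finite products does not enlarge this bound, so $u_{(n)}$ converges geometrically on $\Omega(N)$ and the limit $u$ is a global unit with $P(u) = \varphi$ on every arrow. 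The main obstacle in the argument is thus Proposition \ref{Proposition.Crucial-technical-tool-for-main-theorem}, which bootstraps control on the special-arrow skeleton of $\mathcal{BT}(n+1)$ to control on all of $\mathcal{BT}(n+1)$; the remainder is bookkeeping of explicit $P(f_{H,H',n})$-values together with the convergence estimate afforded by the $\pi^{n}$ in $f_{H,H',n}$.
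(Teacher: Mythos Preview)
Your proof is correct and follows essentially the same strategy as the paper's: approximate $\varphi$ level by level using finite products of the $f_{H,H',n}$, invoke Proposition \ref{Proposition.Crucial-technical-tool-for-main-theorem} to pass from the $(n+1)$-special skeleton to all of $\mathcal{BT}(n+1)$, and exploit the factor $\pi^{n}$ in $f_{H,H',n}$ for convergence on each $\Omega(N)$. The only cosmetic difference is that the paper handles the initial step with the $\ell_{H,H'}$ themselves rather than with $f_{H,H',0}$ (the proposition you cite and Lemma \ref{Lemma.Properties-of-f-H-H'-n-on-Omega-n}(ii) are stated for $n\geq 1$), but since $f_{H,H',0}=\ell_{H'',H'}$ this is immaterial.
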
	

\begin{proof}\let\qed\relax
	\begin{enumerate}[wide=15pt, label=(\roman*)]
		\item Let $\varphi \in \mathbf{H}(\mathcal{BT}, \mathds{Z})$ be given. By successively subtracting $P(u_{n})$ from $\varphi$, where $(u_{n})_{n \in \mathds{N}}$ is a suitable series of functions in $\mathcal{O}(\Omega)^{*}$ with
		$u_{n} \to 1$ locally uniformly (i.e., uniformly on affinoids) we will achieve that
		\[
			\varphi - P\Big( \prod_{1 \leq i \leq n} u_{i} \Big) \equiv 0 \qquad \text{on $\mathcal{BT}(n)$}.
		\] 
		Then $\varphi = P(u)$, where $u = \lim_{n \to \infty} \prod_{1 \leq i \leq n} u_{i}$ is the limit function.
		\item From condition $(\mathrm{B}_{1})$ for $\varphi$ and Proposition \ref{Proposition.van-der-Put-transform-on-special-rational-function-evaluated-on-special-arrow-of-type-1} we find a function $u_{1}$, namely a suitable finite
		product of functions of type $\ell_{H,H'}$, such that $(\varphi - P(u_{1}))(e) = 0$ for each $e \in \mathbf{A}_{v_{0}, 1}$. By condition (C), $\varphi - P(u_{1})$ vanishes on all $e \in \mathbf{A}_{v_{0}}$, and thus by (A) on all $e$ that
		belong to $\mathcal{BT}(1) = \mathrm{st}(v_{0})$.
		\item Suppose that $u_{1}, \dots, u_{n} \in \mathcal{O}(\Omega)^{*}$ are constructed $(n \in \mathds{N})$ such that for $1 \leq i \leq n$
		\begin{enumerate}[label=(\alph*), itemindent=15pt]
			\item $P(u_{i}) \equiv 0$ on $\mathcal{BT}(i-1)$,
			\item $u_{i} \equiv 1 \pmod{\pi^{[(i-1)/2]}}$ on $\mathcal{BT}([(i-1)/2])$; here $[\cdot]$ is the Gauß bracket;
			\item $\varphi - P(\prod_{1 \leq i \leq n} u_{i}) \equiv 0$ on $\mathcal{BT}(n)$
		\end{enumerate}
		hold. (Condition (a) is empty for $i=1$ and therefore trivially fulfilled.) We are going to construct $u_{n+1}$ such that $u_{1}, \dots, u_{n+1}$ fulfill the conditions on level $n+1$.
		\item From (c) and $(\mathrm{B}_{1})$ we have for $n$-special vertices $v$ and $\psi \vcentcolon = \varphi - P(\prod_{1 \leq i \leq n} u_{i}) \in \mathbf{H}(\mathcal{BT}, \mathds{Z})$:
		\[
			\sum_{e \in \mathbf{A}_{v,1} \text{ outbound}} \psi(e) = \sum_{e \in \mathbf{A}_{v,1}} \psi(e) = 0.
		\]
		\item According to Proposition \ref{Proposition.Hyperplanes}, we find $u_{n+1}$, viz, a suitable product of functions $f_{H,H',n}$, such that
		\[
			\big(\psi - P(u_{n+1})\big)(e) = \bigg( \varphi - P\Big(\prod_{1 \leq i \leq n+1} u_{i} \Big)\bigg)(e) = 0
		\]
		on all $(n+1)$-special arrows $e$. Furthermore, that $u_{n+1}$ (like the functions $f_{H,H',n}$, see Lemma \ref{Lemma.Properties-of-f-H-H'-n-on-Omega-n} (ii)) satisfies
		$P(u_{n+1}) \equiv 0$ on $\mathcal{BT}(n)$, i.e., condition (a), and condition (b): $u_{n+1} \equiv 1 \pmod{\pi^{[n/2]}}$ on $\mathcal{BT}([n/2])$. Hence $\varphi - P(\prod_{1 \leq i \leq n+1} u_{i})$ vanishes on arrows which belong to
		$\mathcal{BT}(n)$ or are $(n+1)$-special. Using Proposition \ref{Proposition.Crucial-technical-tool-for-main-theorem}, we see that 
		$\varphi - P(\prod_{1 \leq i \leq n+1} u_{i})$ vanishes on $\mathcal{BT}(n+1)$. That is, conditions (a), (b), (c) hold
		for $u_{1}, \dots, u_{n+1}$, and we have inductively constructed an infinite series $u_{1}, u_{2}, \dots$ with (a), (b) and (c) for all $n$.
		\item It follows from (b) that the infinite product
		\[
			u = \prod_{i \in \mathds{N}} u_{i}
		\]
		is normally convergent on each $\Omega(n)$ and thus defines a holomorphic invertible function $u$ on $\Omega$. Its van der Put transform $P(u)$ restricted to $\mathcal{BT}(n)$ depends only on $u_{1}, \dots, u_{n}$, due to (c), and
		thus agrees with $\varphi$ reduced to $\mathcal{BT}(n)$. Therefore, $\varphi = P(u)$, and the result is shown.
		\hfill \mbox{$\square$}
	\end{enumerate}
\end{proof}

\section{The group $\mathbf{H}(\mathcal{BT}, \mathds{Z})$}

\subsection{} We start with the 
\begin{proof}[Proof of Proposition \ref{Proposition.Crucial-technical-tool-for-main-theorem}]
	\begin{enumerate}[wide=15pt, label=(\roman*)]
		\item The requirements of Proposition \ref{Proposition.Crucial-technical-tool-for-main-theorem} for $\varphi \in \mathbf{H}(\mathcal{BT}, \mathds{Z})$ on level $n \in \mathds{N}_{0}$ will be labelled by $\mathrm{R}(n)$.
		\item Suppose that $\mathrm{R}(n)$ holds for $\varphi$. Then $\varphi$ vanishes on all arrows $\mathbf{A}_{v,1}$ whenever $v$ is $n$-special, since such an $e$ is either $(n+1)$-special or belongs to $\mathcal{BT}(n)$. Hence by
		conditions (C) and (A) of \ref{subsection.Conditions-for-varphi-in-A(BT)}, $\varphi(e) = 0$ whenever $e$ is contiguous with $v$, i.e., if $e$ belongs to $\mathrm{st}(v)$. This shows, in particular, that Proposition 
		\ref{Proposition.Crucial-technical-tool-for-main-theorem} holds for $n=0$.
		\item Let $v \in \mathbf{V}(\mathcal{BT})$ have distance $d(v_{0},v) = n$, but be not necessarily $n$-special. For the same reason as in (ii), $\varphi$ vanishes identically on $\mathrm{st}(v)$ if it vanishes on all outbound arrows
		$e \in \mathrm{A}_{v,1}$. Hence it suffices to show
		\begin{equation} \label{Eq.Assertion-for-varphi-in-the-proof-of-Proposition-3-9}
			\varphi(e) = 0 \quad \text{for outbound arrows $e$ of type 1 and level $n$}. \tag{O}
		\end{equation}
		\item For a vertex $v$ with $d(v_{0}, v) = n$, we let $s(v)$ be the distance to the next $w \in \mathbf{V}(\mathcal{BT})$ which is $n$-special. We are going to show assertion 
		\eqref{Eq.Assertion-for-varphi-in-the-proof-of-Proposition-3-9} by induction on $s(o(e))$.
		\item By $\mathrm{R}(n)$, \eqref{Eq.Assertion-for-varphi-in-the-proof-of-Proposition-3-9} holds if $s = s(o(e)) = 0$, i.e., if $o(e)$ is $n$-special. Therefore, suppose that $s > 0$. By the preceding we are 
		reduced to showing 
		\begin{equation} \label{Eq.Reduced-assertion-for-varphi-in-the-proof-of-Proposition-3-9}
			\begin{array}{l}
				\text{Let $e$ be an outbound arrow of type 1, level $n$, and with $s = s(o(e)) > 0$.} \\
				\text{Then $e$ belongs to $\mathrm{st}(\tilde{v})$, where $d(v_{0}, \tilde{v}) = n$ and $s(\tilde{v}) < s$.}
			\end{array} \tag{P}
		\end{equation}
		\item We reformulate \eqref{Eq.Reduced-assertion-for-varphi-in-the-proof-of-Proposition-3-9} in lattice terms. Representing $v_{0} = [L_{0}]$ through $L_{0} = O^{r}$, the vertices 
		$v \in \mathbf{V}(\mathcal{BT})$ correspond one-to-one to sublattices $L$ of full rank $r$ which satisfy  $L \subset L_{0}$, $L \not\subset \pi L_{0}$. For such a vertex $v$ or its lattice $L$, we let 
		$(n_{1}, n_{2}, \dots, n_{r})$ with $n_{1} \geq n_{2} \geq \dots \geq n_{r} = 0$ be the sequence of elementary divisors (\textbf{sed}) of $L_{0}/L$ ($n_{r} = 0$ as $L \not\subset \pi L_{0}$). That is,
		\[
			L_{0}/L \cong O/(\pi^{n_{1}}) \times \cdots \times O/(\pi^{n_{r}}).
		\]
		Then $n_{1} = d(v_{0}, v)$, and $v$ is $n$-special if and only if its sed is $(n, \dots,n, 0)$.
		\item Let $e = (v,v')$ be given as required for \eqref{Eq.Reduced-assertion-for-varphi-in-the-proof-of-Proposition-3-9}, $v=[L]$, $v' = [L']$, where $\pi^{n+1}L_{0} \subset L' \subset L \subset L_{0}$. Let
		$(n_{1} = n, n_{2}, \dots, n_{r})$ be the sed of $L_{0}/L$. Then, as $\dim_{\mathds{F}}(L/L') = r-1$ and $d(v_{0}, v') = n+1$, $(n_{1}' = n+1, \dots, n_{r-1}' = n_{r-1}+1, n_{r})$ is the sed of $L_{0}/L'$. This means 
		that $L_{0}$ has an ordered $O$-basis $\{x_{1}, \dots, x_{r}\}$ such that $\{ \pi^{n+1}x_{1}, \pi^{n_{2}+1}x_{2}, \dots, \pi^{n_{r-1}+1}x_{r-1}, \pi^{n_{r}}x_{r}\}$ is a basis of $L'$ and 
		$\{ \pi^{n} x_{1}, \pi^{n_{2}} x_{2}, \dots, \pi^{n_{r}} x_{r}\}$ is a basis of $L$. Assume that $k$ with $1 \leq k \leq r-1$ is minimal with $n_{r-1} = n_{k}$. Let $M$ be the sublattice of $L_{0}$ with basis 
		$\{ \pi^{n}x_{1}, \dots, \pi^{n}x_{r-1}, x_{r}\}$. Then $w = [M]$ is $n$-special and $s(v) = d(v,w) = n - n_{r-1}$, which by assumption is positive.
		Put $\tilde{L}$ for the lattice with basis 
		\[
			\{ \pi^{n}x_{1}, \pi^{n_{2}}x_{2}, \dots, \pi^{n_{k-1}}x_{k-1}, \pi^{n_{k}+1}x_{k}, \pi^{n_{k+1}+1}x_{k+1}, \dots, \pi^{n_{r-1}+1}x_{r-1}, \pi^{n_{r}}x_{r}\}.
		\] 
		The vertex $\tilde{v} \vcentcolon = [\tilde{L}]$ satisfies
		\begin{multline}
			d(v_{0}, \tilde{v}) = n, ~d(v,\tilde{v}) = 1 = d(v', \tilde{v}) \\ \text{and} \quad s(\tilde{v}) = d(w,\tilde{v}) = n - n_{r-1}-1 = s(v)-1.
		\end{multline}
		Hence $e = (v,v')$ belongs to $\mathrm{st}(\tilde{v})$, where $\tilde{v}$ is as wanted for assertion \eqref{Eq.Reduced-assertion-for-varphi-in-the-proof-of-Proposition-3-9}.
	\end{enumerate}
	This finishes the proof of Proposition \ref{Proposition.Crucial-technical-tool-for-main-theorem}.
\end{proof}

\begin{Corollary} \label{Corollary.Statement-for-varphi-who-is-0-on-i-special-arrows}
	Let $\varphi \in \mathbf{H}(\mathcal{BT}, \mathds{Z})$ be such that $\varphi(e) = 0$ for all $i$-special arrows $e$, where $1 \leq i \leq n$. Then $\varphi \equiv 0$ on $\mathcal{BT}(n)$.
\end{Corollary}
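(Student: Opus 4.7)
The plan is a straightforward induction on $n$, using Proposition \ref{Proposition.Crucial-technical-tool-for-main-theorem} as the inductive engine.

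For the base case $n=0$, the statement is vacuous: $\mathcal{BT}(0) = \{v_0\}$ contains no $1$-simplices, so there is nothing to check. (Alternatively one can start the induction at $n=1$: the hypothesis says $\varphi$ vanishes on all $1$-special arrows, i.e., on all of $\mathbf{A}_{v_0,1}$, and then conditions (C) and (A) from \ref{subsection.Conditions-for-varphi-in-A(BT)} force $\varphi$ to vanish on all of $\mathrm{st}(v_0) = \mathcal{BT}(1)$, exactly as in step (ii) of the proof of Proposition \ref{Proposition.Crucial-technical-tool-for-main-theorem}.)

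For the induction step, I assume the Corollary holds for some $n \geq 0$, and consider $\varphi \in \mathbf{H}(\mathcal{BT}, \mathds{Z})$ satisfying $\varphi(e) = 0$ for all $i$-special arrows $e$ with $1 \leq i \leq n+1$. Since in particular $\varphi$ vanishes on all $i$-special arrows for $1 \leq i \leq n$, the induction hypothesis yields $\varphi \equiv 0$ on $\mathcal{BT}(n)$. Combined with the remaining hypothesis that $\varphi$ vanishes on all $(n+1)$-special arrows, this puts $\varphi$ precisely in the situation of Proposition \ref{Proposition.Crucial-technical-tool-for-main-theorem} at level $n$. Applying that proposition, we conclude $\varphi \equiv 0$ on $\mathcal{BT}(n+1)$, closing the induction.

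There is no real obstacle here, as all the work has been done in Proposition \ref{Proposition.Crucial-technical-tool-for-main-theorem}; the Corollary is essentially an iterated reformulation of that proposition, peeling off one shell $\mathcal{BT}(k) \setminus \mathcal{BT}(k-1)$ at a time by feeding the vanishing on $k$-special arrows together with the previously-established vanishing on $\mathcal{BT}(k-1)$ into Proposition \ref{Proposition.Crucial-technical-tool-for-main-theorem}.
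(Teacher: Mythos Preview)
Your proof is correct and follows exactly the approach of the paper, which simply states that the corollary ``follows by induction from Proposition \ref{Proposition.Crucial-technical-tool-for-main-theorem}.'' You have merely spelled out the details of that induction.
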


\begin{proof}
	This follows by induction from \ref{Proposition.Crucial-technical-tool-for-main-theorem}.
\end{proof}

\subsection{} Next we give a different description of $\mathbf{H}(\mathcal{BT}, \mathds{Z})$, see Theorem \ref{Theorem.Abstraction-to-arbitrary-abelian-groups}. Let $v$ be an $n$-special vertex $(n \geq 1)$, $v^{*}$ its predecessor on the uniquely determined $n$-special path $(v_{0}, v_{1}, \dots, v_{n-1} = v^{*}, v)$ from $v_{0}$ to $v$, and $e^{*}$ the $n$-special arrow $(v^{*}, v)$.
Its inverse $\bar{e^{*}} = (v,v^{*})$ belongs to $\mathbf{A}_{v,r-1}$.

\begin{Lemma} \label{Lemma.If-and-only-if-condition-for-inboundness}
	In the given situation, $e \in \mathbf{A}_{v,1}$ is inbound if and only if $e \prec \bar{e^{*}}$.
\end{Lemma}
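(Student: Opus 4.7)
The plan is to reduce the statement to a lattice-coordinate computation and then apply the structure theorem for finitely generated $O$-modules. Since $v$ is $n$-special, the sed characterization used in the proof of Proposition \ref{Proposition.Crucial-technical-tool-for-main-theorem} shows its sed with respect to $v_{0} = [L_{0}]$ is $(n, 0, \ldots, 0)$, so one can find a basis $\{y, e_{2}, \ldots, e_{r}\}$ of $L_{0}$ in which $v$ is represented by
\[
  L = \pi^{n} O y \oplus O e_{2} \oplus \cdots \oplus O e_{r},
\]
and the (uniquely determined) $(n-1)$-special predecessor $v^{*}$ by $L^{*} = \pi^{n-1} O y \oplus O e_{2} \oplus \cdots \oplus O e_{r}$.

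Using the representative $\pi L^{*}$ of $v^{*}$ in the description of $\overline{e^{*}}$ (admissible because $L \supset \pi L^{*} \supset \pi L$ with $\dim_{\mathds{F}}(L / \pi L^{*}) = r-1$, matching $\mathrm{type}(\overline{e^{*}}) = r-1$), one identifies $\overline{M}_{\overline{e^{*}}} = \pi L^{*} / \pi L$ as the $1$-dimensional $\mathds{F}$-subspace of $L/\pi L$ spanned by $\pi^{n} y \bmod \pi L$. Consequently, for any $e = (v, [L'']) \in \mathbf{A}_{v,1}$ with $L \supset L'' \supset \pi L$, the relation $\overline{e^{*}} \prec e$ translates to the single condition $\pi^{n} y \in L''$.

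It remains to show that $\pi^{n} y \in L''$ is equivalent to $e$ being inbound. Since $|L_{0}/L''| = q^{n+1}$, the structure theorem leaves only two possibilities for the sed of the pair $([L_{0}], [L''])$: either $(n+1, 0, \ldots, 0)$, which makes $L_{0}/L''$ cyclic and yields $d(v_{0}, [L'']) = n+1$ (outbound), or $(n, 1, 0, \ldots, 0)$, which yields $d(v_{0}, [L'']) = n$ (inbound). Writing any $x \in L_{0}$ as $x = ay + \ell$ with $a \in O$ and $\ell \in L$, the relation $\pi \ell \in \pi L \subset L''$ implies that the order of $x$ in $L_{0}/L''$ coincides with the order of $ay$; only units $a$ can yield the maximal possible order $q^{n+1}$, and in that case the order equals that of $y$ itself. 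Hence $L_{0}/L''$ is cyclic iff $y$ has order $q^{n+1}$ iff $\pi^{n} y \notin L''$. Combining, $e$ is outbound iff $\overline{e^{*}} \not\prec e$, proving the lemma.

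The main obstacle is purely bookkeeping: fixing compatible representatives so that $\overline{M}_{\overline{e^{*}}}$ is transparently the first-coordinate line of $L/\pi L$. Once the coordinates are in place, both equivalences (inboundness $\Leftrightarrow$ non-cyclicity of $L_{0}/L''$, and non-cyclicity $\Leftrightarrow$ $\pi^{n} y \in L''$) fall out of the structure theorem with no further geometric input.
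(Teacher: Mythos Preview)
Your proof is correct and follows essentially the same approach as the paper: both choose an adapted basis of $L_{0}$ so that $v = [L]$ with $L$ having first basis vector $\pi^{n}y$, identify $\overline{M}_{\overline{e^{*}}}$ as the line spanned by $\pi^{n}y \bmod \pi L$, and reduce $\overline{e^{*}} \prec e$ to the condition $\pi^{n}y \in L''$. The only difference is the final equivalence with inboundness: where you invoke the structure theorem to set up the sed dichotomy $(n+1,0,\dots,0)$ versus $(n,1,0,\dots,0)$ and argue via cyclicity of $L_{0}/L''$, the paper observes in one line that $\pi^{n}y \in L'' \Leftrightarrow \pi^{n}L_{0} \subset L''$ (the remaining generators $\pi^{n}e_{i}$ already lie in $\pi L \subset L''$), which is exactly $d(v_{0},[L'']) \leq n$.
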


\begin{proof}
	As the stabilizer $\mathrm{GL}(r,O)$ of $L_{0} = O^{r}$ acts transitively on $n$-special vertices or arrows, we may suppose that $v = [L_{n}]$, where $L_{n}$ is the $O$-lattice with basis 
	$\{\pi^{n}x_{1}, \dots, \pi^{n}x_{r-1}, x_{r} \}$, and thus $v^{*} = [L_{n-1}]$. (Here $\{x_{1}, \dots, x_{r}\}$ is the standard basis of $L_{0}$.) Under \eqref{Eq.Canonical-bijection-of-Avt-and-Gr-FtLpiL}, 
	$\bar{e^{*}}$ corresponds to the $(r-1)$-dimensional subspace $\pi L_{n-1}/\pi L_{n}$ of the $r$-dimensional $\mathds{F}$-space $L_{n}/\pi L_{n}$, which has 
	the $(\overline{\pi^{n}x_{i}}) = \pi^{n}x_{i} \pmod{\pi L_{n}}$, $1 \leq i < r$, as a basis. Let $\bar{G}$ be a line in $L_{n}/\pi L_{n}$ with pre-image $G$ in 
	$L_{n}$, and let $e_{\bar{G}} = (v, v_{\bar{G}})$ be the arrow of type 1 determined by $\bar{G}$. Then $v_{\bar{G}} = [G]$ and
	\[
		e_{\bar{G}} \prec e^{*} \Longleftrightarrow \bar{G} \subsetneq \pi L_{n-1}/\pi L_{n} \Longleftrightarrow G \subset \pi L_{n-1}.
	\]
	If this is the case, $\pi^{n}L_{0} \subset L_{n} \subset \pi^{-1}G \subset L_{n-1} \subset L_{0}$, that is, $d(v_{0}, [G]) \leq n$, and $e_{\bar{G}}$ is inbound. On the other hand, if $G \not\subset \pi L_{n-1}$,
	then $\pi^{-1}G \not\subset L_{0}$. Since $\pi^{n}L_{0} \not\subset G$, we then have $d(v_{0}, [G])=n+1$, and $e_{\bar{G}}$ is outbound.
\end{proof}

\subsection{} \label{subsection.Reformulation-of-B-1-at-n-special-v} We may now reformulate condition $(\mathrm{B}_{1})$ for $\varphi \in \mathbf{H}(\mathcal{BT}, \mathds{Z})$ at the $n$-special vertex $v$ of level $n \geq 1$ 
as follows: Splitting
\begin{equation}
	\mathbf{A}_{v,1} = \mathbf{A}_{v,1,\mathrm{in}} \cupdot \mathbf{A}_{v,1,\mathrm{out}}
\end{equation}
into the subsets of inbound / outbound arrows (note that $e \in \mathbf{A}_{v,1}$ is outbound if and only if it is $(n+1)$-special), $(\mathrm{B}_{1})$ reads
\begin{equation*}
	0 = \sum_{e \in \mathbf{A}_{v,1}} \varphi(e) = \sum_{e \in \mathbf{A}_{v,1,\mathrm{in}}} \varphi(e) + \sum_{e \in \mathbf{A}_{v,1,\mathrm{out}}} \varphi(e) = \varphi(\bar{e^{*}}) + \sum_{e \in \mathbf{A}_{v,1,\mathrm{out}}} \varphi(e)
\end{equation*}
(where we used \ref{Lemma.If-and-only-if-condition-for-inboundness} and condition (C) for $\varphi(\bar{e^{*}})$), i.e., as the flow condition
\begin{equation} \label{Eq.Flow-condition}
	\varphi(e^{*}) = \sum_{e \in \mathbf{A}_{v,1,\mathrm{out}}} \varphi(e).
\end{equation}
The number of terms in the sum is
\begin{equation} \label{Eq.Cardinality-of-outbound-arrows-of-type-1}
	\# \mathbf{A}_{v,1,\mathrm{out}} = \# \mathbf{A}_{v,1} - \# \mathbf{A}_{v,1,\mathrm{in}} = \# \mathds{P}^{r-1}(\mathds{F}) - \# \mathds{P}^{r-2}(\mathds{F}) = q^{r-1}.
\end{equation}

\subsection{} Let $\mathcal{T}_{v_{0}}$ be the full subcomplex of $\mathcal{BT}$ composed of the $n$-special vertices ($n \in \mathds{N}_{0}$) along with the 1-simplices connecting them. In other words,
$\mathcal{T}_{v_{0}}$ is the union of the paths $\mathbf{A}_{v_{0},1,n}$, where $n \in \mathds{N}$, see \ref{subsubsection.Identification-of-arrows-with-special-Grassmannians}.
It is connected, one-dimensional and cycle-free, hence a tree. The valence (= number of neighbors) of $v_{0}$ is $\# \mathds{P}^{r-1}(\mathds{F}) = (q^{r}-1)/(q-1)$, the valence of each other vertex $v \neq v_{0}$ is $q^{r-1}+1$, as we
read off from \eqref{Eq.Cardinality-of-outbound-arrows-of-type-1}. Let further $\mathcal{T}_{v_{0}}(n) \vcentcolon = \mathcal{T}_{v_{0}} \cap \mathcal{BT}(n)$.

\subsubsection{} We define $\mathbf{H}(n)$ as the image of $\mathbf{H}(\mathcal{BT}, \mathds{Z})$ in $\{ \varphi \colon \mathbf{A}(\mathcal{BT}(n)) \to \mathds{Z}\}$ obtained by restriction. Hence \stepcounter{equation}
\begin{equation} \label{Eq.H(BT,Z)-as-projective-limit}
	\mathbf{H}(\mathcal{BT},\mathds{Z}) = \varprojlim_{n \in \mathds{N}} \mathbf{H}(n).
\end{equation}
Put further
\begin{equation}
	\mathbf{H}'(n) \vcentcolon = \left\{ \varphi \colon \mathbf{A}(\mathcal{T}_{v_{0}}(n)) \longrightarrow \mathds{Z} \, \middle\vert  \begin{array}{l} \varphi \text{ is subject to \eqref{Eq.Condition-1-H'n} and \ref{Eq.Condition-2-H'n}} \\ \text{for each $i$-special $v$, $0 \leq i < n$} \end{array} \!\!\right\}.
\end{equation}
Here $\mathbf{A}(\mathcal{S})$ is the set of arrows (oriented 1-simplices) of the simplicial complex $\mathcal{S}$, and the conditions are 
\begin{equation} \label{Eq.Condition-1-H'n}
	\varphi(e) + \varphi(\bar{e}) = 0 \quad \text{for each arrow $e$ with inverse $\bar{e}$};
\end{equation}	
\begin{equation} \label{Eq.Condition-2-H'n}
	\sum_{\substack{ e \in \mathbf{A}(\mathcal{T}_{v_{0}}) \\ o(e) = v }} \varphi(e) = 0. \tag*{(4.6.5)($v$)}
\end{equation}

\subsection{} Equality \eqref{Eq.Flow-condition} together with the condition $(\mathrm{B}_{1})$ at $v_{0}$ states that the restriction of $\varphi \in \mathbf{H}(\mathcal{BT}, \mathds{Z})$ to
$\mathcal{T}_{v_{0}}(n)$ is an element of $\mathbf{H}'(n)$. Therefore, restriction defines homomorphisms $r_{n} \colon \mathbf{H}(n) \to \mathbf{H}'(n)$, which make the diagram (with natural maps
$q_{n}$, $q_{n}'$)
\begin{equation}
	\begin{tikzcd}
		\mathbf{H}(n+1) \ar[r, "r_{n+1}"]	\ar[d, "q_{n}"]	& \mathbf{H}'(n+1) \ar[d, "q_{n}'"] \\
		\mathbf{H}(n) \ar[r, "r_{n}"]									& \mathbf{H}'(n)
	\end{tikzcd}
\end{equation}
commutative. Note that both $q_{n}$ and $q_{n}'$ are surjective, the first by definition, the second one since $\mathcal{T}_{v_{0}}$ is a tree. 
Corollary \ref{Corollary.Statement-for-varphi-who-is-0-on-i-special-arrows} may be rephrased as

\begin{Proposition} \label{Proposition.rn-is-injective}
	$r_{n}$ is injective for $n \in \mathds{N}$. \hfill \mbox{$\square$}
\end{Proposition}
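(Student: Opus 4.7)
The plan is to reduce the statement directly to Corollary~\ref{Corollary.Statement-for-varphi-who-is-0-on-i-special-arrows}. Given $\varphi \in \mathbf{H}(n)$ with $r_n(\varphi)=0$, I would first choose a lift $\tilde{\varphi} \in \mathbf{H}(\mathcal{BT},\mathds{Z})$, which exists by the very definition of $\mathbf{H}(n)$ as the image of the restriction map in \eqref{Eq.H(BT,Z)-as-projective-limit}. The hypothesis $r_n(\varphi)=0$ then translates verbatim into the assertion that $\tilde{\varphi}$ vanishes on every oriented 1-simplex of the truncated tree $\mathcal{T}_{v_0}(n)$.

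The next step is a short bookkeeping identification: the arrows of $\mathcal{T}_{v_0}(n)$ are exactly the $i$-special arrows for $1 \leq i \leq n$, together with their reverses. This is immediate from the construction in \ref{subsubsection.Identification-of-arrows-with-special-Grassmannians}, since $\mathcal{T}_{v_0}$ is by definition assembled from the paths $\mathbf{A}_{v_0,1,i}$; truncating at combinatorial distance $n$ from $v_0$ cuts this off at $i = n$, and alternation (condition (A) for $\tilde{\varphi}$) takes care of the opposite orientations.

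Combining these two observations, $\tilde{\varphi}$ vanishes on all $i$-special arrows for $1 \leq i \leq n$. Corollary~\ref{Corollary.Statement-for-varphi-who-is-0-on-i-special-arrows} then produces $\tilde{\varphi} \equiv 0$ on $\mathcal{BT}(n)$, so $\varphi = 0$ in $\mathbf{H}(n)$, proving injectivity of $r_n$. I do not expect any genuine obstacle: the geometric heavy lifting has already been carried out in Proposition~\ref{Proposition.Crucial-technical-tool-for-main-theorem} and its inductive upgrade to Corollary~\ref{Corollary.Statement-for-varphi-who-is-0-on-i-special-arrows}, and what remains is purely a matter of unpacking definitions.
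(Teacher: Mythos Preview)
Your proposal is correct and is precisely the unpacking of what the paper intends: it presents Proposition~\ref{Proposition.rn-is-injective} as a direct rephrasing of Corollary~\ref{Corollary.Statement-for-varphi-who-is-0-on-i-special-arrows}, and your argument (lift to $\tilde{\varphi}$, identify the arrows of $\mathcal{T}_{v_0}(n)$ with the $i$-special arrows for $1 \leq i \leq n$, apply the corollary) is exactly that rephrasing made explicit.
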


\begin{Lemma} \label{Lemma.rn-is-surjective}
	$r_{n}$ is also surjective.
\end{Lemma}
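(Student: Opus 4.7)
The plan is to argue by induction on $n$, combining the commutative square relating $r_n$, $r_{n+1}$, $q_n$, $q_n'$ with the explicit van der Put computations from subsection~\ref{subsection.Description-of-P(f_H_H'_n)-on-n+1-special-arrows}. The base case $n=0$ is trivial, since $\mathbf{H}'(0)=0$.

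For the inductive step, given $\psi\in\mathbf{H}'(n+1)$, first apply the induction hypothesis to $q_n'(\psi)\in\mathbf{H}'(n)$ to obtain $\varphi_n\in\mathbf{H}(n)$ with $r_n(\varphi_n)=q_n'(\psi)$, and lift $\varphi_n$ to some $\tilde\varphi\in\mathbf{H}(n+1)$ via the surjective $q_n$. Subtracting $r_{n+1}(\tilde\varphi)$ from $\psi$ reduces matters to the case where $\psi$ already vanishes on $\mathcal{T}_{v_0}(n)$, and is thus supported on the $(n+1)$-special arrows. In that case the sum relation \ref{Eq.Condition-2-H'n} at each $n$-special vertex $v$ collapses to
\[
\sum_{w\text{ child of }v\text{ in }\mathcal{T}_{v_0}}\psi(v,w)=0,
\]
the inbound arrow $(v,v^{*})$ contributing zero because it already lies in $\mathcal{T}_{v_0}(n)$.

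Because $\mathcal{BT}(n)$ is a finite complex --- the lattices $L$ with $\pi^{n}L_{0}\subset L\subset L_{0}$ correspond to the finitely many submodules of the finite $O_{n}$-module $L_{0}/\pi^{n}L_{0}$ --- there are only finitely many $n$-special vertices. For each such $v$, I fix a hyperplane $H_{v}$ with $v_{n}(H_{v})=v$ and write $e_{v}^{+}=(v,v_{n+1}(H_{v}))$ for the continuation arrow in $\mathcal{T}_{v_0}$. The proposition ending Section~2 then supplies, for each of the remaining $q^{r-1}-1$ children $e=(v,w)$, a hyperplane $H_{e}$ such that $P(f_{H_{e},H_{v},n})$ equals $-1$ on $e$, $+1$ on $e_{v}^{+}$, and $0$ on every other $(n+1)$-special arrow; by Lemma~\ref{Lemma.Properties-of-f-H-H'-n-on-Omega-n}(ii) this transform also vanishes on all of $\mathcal{BT}(n)$. (In the initial case $n=0$, replace $f_{H_{e},H_{v_{0}},0}$ by $\ell_{H_{e},H_{v_{0}}}$ and invoke Corollary~\ref{Corollary.van-der-Put-transform-of-special-rational-function-evaluated-on-arbitrary-arrow-of-type-1}.)

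Setting
\[
u\;:=\;\prod_{v\text{ $n$-special}}\;\prod_{\substack{e=(v,w)\\ e\neq e_{v}^{+}}}f_{H_{e},H_{v},n}^{-\psi(e)}\;\in\;\mathcal{O}(\Omega)^{*},
\]
which is a finite product of units, linearity of $P$ yields $P(u)(e)=\psi(e)$ on every non-continuation child arrow, while the relation $(\mathrm{B}_{1})$ at $v$ (satisfied by each factor separately on the outbound arrows from $v$, so that their $P$-values sum to $0$ on these arrows) forces $P(u)(e_{v}^{+})=-\sum_{e\neq e_{v}^{+}}\psi(e)=\psi(e_{v}^{+})$. Hence $r_{n+1}(P(u)|_{\mathcal{BT}(n+1)})=\psi$, closing the induction. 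The delicate point is the non-interference between building blocks attached to distinct $n$-special vertices, which is exactly the content of part (ii) of the proposition ending Section~2; once that is available, only finite bookkeeping remains.
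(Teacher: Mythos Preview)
Your proof is correct and follows essentially the same approach as the paper's: both argue by induction on $n$, reduce via the commutative square to showing that $r_{n+1}$ maps $Q_{n+1}=\ker q_n$ onto $Q_{n+1}'=\ker q_n'$, and then use the transforms $P(f_{H,H',n})$ (together with Lemma~\ref{Lemma.Properties-of-f-H-H'-n-on-Omega-n}(ii) and the proposition at the end of \ref{subsection.Description-of-P(f_H_H'_n)-on-n+1-special-arrows}) to hit any prescribed $\psi\in Q_{n+1}'$. Your version is in fact more explicit than the paper's, which at the corresponding point simply asserts that the $P(f_{H,H',n})$ provide ``sufficiently many elements of $Q_{n+1}$ \dots\ with sufficiently independent values''; you spell out the product $u$ and verify the value at $e_v^{+}$ via the zero-sum relation, and you also handle the $n=0$ step with $\ell_{H,H'}$ in place of $f_{H,H',0}$, which the paper folds into its base case $n=1$.
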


\begin{proof}
	For $n=1$, this is implicit in the proof of Theorem \ref{Theorem.van-der-Put-sequence-short-exact-sequence-of-GK-modules} (i.e., one may arbitrarily prescribe the value of $\varphi \in \mathbf{H}(\mathcal{BT}, \mathds{Z})$
	on $e \in \mathbf{A}_{v_{0},1}$, subject only to $(\mathrm{B}_{1})$ at $v_{0}$).
	
	For $n \geq 1$, let $Q_{n+1}$ (respectively $Q_{n+1}'$) be the kernel of $q_{n}$ (resp. $q_{n}'$). Then $r_{n+1}(Q_{n+1}) \subset Q_{n+1}'$, and we have the commutative diagram with exact rows
	\[
		\begin{tikzcd}
			0 \ar[r]	&Q_{n+1} \ar[r]	\ar[d]	&\mathbf{H}(n+1)	\ar[r] \ar[d, "r_{n+1}"]	& \mathbf{H}(n) \ar[r] \ar[d, "r_{n}"]	& 0 \\
			0 \ar[r]	& Q_{n+1}' \ar[r]			&\mathbf{H}'(n+1) \ar[r]			& \mathbf{H}'(n) \ar[r]			& 0.
		\end{tikzcd}
	\]
	By induction hypothesis, $r_{n}$ is surjective, so the surjectivity of $r_{n+1}$ is implied by
	\begin{equation} \label{Eq.Reduced-assertion-by-induction}
		r_{n+1}(Q_{n+1}) = Q_{n+1}'. \tag{$\ast$}
	\end{equation}
	But
	\begin{align*}
		Q_{n+1}		&= \{ \varphi \in \mathbf{H}(n+1) 	\mid \varphi \equiv 0 \text{ on } \mathcal{BT}(n) \} \\
		Q_{n+1}'	&= \{ \varphi \in \mathbf{H}'(n+1) 	\mid \varphi \equiv 0 \text{ on } \mathcal{T}_{v_{0}}(n) \},
	\end{align*}
	so \eqref{Eq.Reduced-assertion-by-induction} follows from the existence of sufficiently many elements of $Q_{n+1}$ (e.g., the classes in $\mathbf{H}(n+1)$ of the $P(f_{H,H',n})$) which have sufficiently independent values on the arrows
	in $\mathcal{T}_{v_{0}}(n+1)$ not in $\mathcal{T}_{v_{0}}(n)$. See also the proof of Theorem \ref{Theorem.van-der-Put-sequence-short-exact-sequence-of-GK-modules}, steps (iv) and (v).
\end{proof}

\subsection{} Let $\mathbf{H}(\mathcal{T}_{v_{0}}, \mathds{Z}) = \varprojlim_{n \in \mathds{N}} \mathbf{H}'(n)$ be the group of functions $\varphi \colon \mathbf{A}(\mathcal{T}_{v_{0}}) \to \mathds{Z}$ which satisfy \eqref{Eq.Condition-1-H'n} 
and \ref{Eq.Condition-2-H'n} for all vertices $v$ of $\mathcal{T}_{v_{0}}$. Similarly, we define $\mathbf{H}(\mathcal{T}_{v_{0}}, A)$ for an arbitrary abelian group $A$ instead of $\mathds{Z}$. That is, elements of 
$\mathbf{H}(\mathcal{T}_{v_{0}}, A)$ are characterized by conditions analogous with (A) and (B) of \ref{subsection.Conditions-for-varphi-in-A(BT)}, while (C) is not applicable. Putting together the
considerations of \ref{subsection.Reformulation-of-B-1-at-n-special-v} with \ref{Proposition.rn-is-injective} and \ref{Lemma.rn-is-surjective}, we find
\begin{equation} \label{Eq.Canonical-isomorphism-between-H(BT,Z)-and-H(TV0,Z)}
	\begin{tikzcd}
		\mathbf{H}(\mathcal{BT}, \mathds{Z}) \ar[r, "\cong"] &\mathbf{H}(\mathcal{T}_{v_{0}}, \mathds{Z}), \tag{4.11}
	\end{tikzcd}
\end{equation}
where the canonical isomorphism is given by restricting $\varphi \in \mathbf{H}(\mathcal{BT}, \mathds{Z})$, $\varphi \colon \mathbf{A}(\mathcal{BT}) \to \mathds{Z}$ to the subset $\mathbf{A}(\mathcal{T}_{v_{0}})$ of $\mathbf{A}(\mathcal{BT})$.

In what follows, $A$ is an arbitrary abelian group. The next result is a consequence of the above.
\stepcounter{subsection}

\begin{Proposition} \label{Proposition.Canonical-maps}
	Restriction to the arrows of $\mathcal{T}_{v_{0}}$ yields an isomorphism
	\begin{equation} \label{Eq.Canonical-isomorphism-between-H(BT,A)-and-H(TV0,A)}
		\begin{tikzcd}
			\mathbf{H}(\mathcal{BT}, A) \ar[r, "\cong"]	& \mathbf{H}(\mathcal{T}_{v_{0}}, A).
		\end{tikzcd}
	\end{equation}
\end{Proposition}

\begin{proof}
	It suffices to observe that the preceding results Proposition 3.10, Corollary 4.2, Proposition 4.8, and Lemma 4.9 remain valid --- with identical proofs --- for $A$-valued functions instead
	of $\mathds{Z}$-valued functions.
\end{proof}

\subsection{} Recall that an $A$-valued distribution on a compact totally disconnected topological space $X$ is a a map $\delta \colon U \mapsto \delta(U) \in A$ from the set of compact-open subspaces 
$U$ of $X$ to $A$ which is additive in finite disjoint unions. We call $\delta(U)$ the \textbf{volume} of $U$ \textbf{with respect to} $\delta$. The \textbf{total mass} (or volume) of $\delta$ is $\delta(X)$.

We apply this to the situation (see \ref{subsubsection.Identification-of-arrows-with-special-Grassmannians} - \eqref{Eq.Special-grassmannian-is-compact-and-open-in-Gr-KtV}) where
\begin{equation}
	X = \mathrm{Gr}_{K,1}(V) = \left\{ \begin{array}{l} \text{lines $G$ of} \\ \text{the $K$-space $V$} \end{array} \right\} = \mathds{P}(V).
\end{equation}
As we have identified $V$ with its dual $V^{\wedge}$ through the bilinear form $\langle \cdot, \cdot \rangle$, we also have an identification 
\[
	\begin{tikzcd}
		\mathrm{Gr}_{K,1}(V) = \mathds{P}(V) \ar[r, "{\cong}"]	& \mathds{P}(V^{\wedge}) = \mathrm{Gr}_{K,r-1}(V)
	\end{tikzcd}
\]
given by $G \mapsto G^{\perp}$. Hence we could state the following assertions concerning distributions on $\mathds{P}(V)$ for distributions on $\mathds{P}(V^{\wedge})$.
\stepcounter{subsubsection}
\subsubsection{} Let $\mathbf{D}(\mathds{P}(V), A)$ be the group of $A$-valued distributions on $\mathds{P}(V)$ with subgroup $\mathbf{D}^{0}(\mathds{P}(V), A)$
of distributions with total mass 0. By \eqref{Eq.Special-grassmannian-is-compact-and-open-in-Gr-KtV}, the sets $\mathds{P}(V)(e) = \mathrm{Gr}_{K,1}(e)$, where $e$ runs through the outbound arrows of
$\mathbf{A}_{v_{0},1,n}$ ($n \in \mathds{N}$), i.e., through the set \stepcounter{equation}
\begin{equation}
	\mathbf{A}^{+}(\mathcal{T}_{v_{0}}) = \{e \in \mathbf{A}(\mathcal{T}_{v_{0}}) \mid e \text{ oriented away from $v_{0}$}\},
\end{equation}
form a basis for the topology on $\mathds{P}(V)$. Therefore, an element $\delta$ of $\mathbf{D}(\mathds{P}(V), A)$ is an assignment
\[
	\delta \colon \mathbf{A}^{+}(\mathcal{T}_{v_{0}}) \longrightarrow A
\]
(where we interpret $\delta(e)$ as the volume of $\mathds{P}(V)(e)$ with respect to $\delta$) subject to the requirement
\begin{equation}
	\delta(e^{*}) = \sum_{\substack{e \in \mathbf{A}^{+}(\mathcal{T}_{v_{0}}) \\ o(e) = t(e^{*})}} \delta(e)
\end{equation}
for each $e^{*} \in \mathbf{A}^{+}(\mathcal{T}_{v_{0}})$. The total mass of $\delta$ is
\begin{equation}
	\delta(\mathds{P}(V)) = \sum_{\substack{e \in \mathbf{A}^{+}(\mathcal{T}_{v_{0}}) \\ o(e) = v_{0}}} \delta(e) = \sum_{e \in \mathbf{A}_{v_{0},1}} \delta(e).
\end{equation}
In view of \eqref{Eq.Flow-condition} and (4.6.5)($v_{0}$), we find that
\begin{equation} \label{Eq.Isomorphism-of-D0(PVDach,A)-and-H(Tv0,A)}
	\begin{tikzcd}
		\mathbf{D}^{0}(\mathds{P}(V), A) \ar[r, "\cong"] &\mathbf{H}(\mathcal{T}_{v_{0}}, A), \tag{4.14}
	\end{tikzcd}
\end{equation}
where some $\delta \colon \mathbf{A}^{+}(\mathcal{T}_{v_{0}}) \to A$ in the left hand side is completed to a map on $\mathbf{A}(\mathcal{T}_{v_{0}})$ by \eqref{Eq.Condition-1-H'n}, i.e., by $\varphi(\bar{e}) = -\varphi(e)$. \stepcounter{subsection}

While both isomorphisms in \eqref{Eq.Canonical-isomorphism-between-H(BT,Z)-and-H(TV0,Z)} (or \eqref{Eq.Canonical-isomorphism-between-H(BT,A)-and-H(TV0,A)}) and \eqref{Eq.Isomorphism-of-D0(PVDach,A)-and-H(Tv0,A)} fail to be 
$G(K)$-e\-qui\-vari\-ant (as $G(K)$ fixes neither $v_{0}$ nor $\mathcal{T}_{v_{0}}$), the resulting isomorphism
\begin{align}
	\mathbf{H}(\mathcal{BT}, \mathbf{A})	&\overset{\cong}{\longrightarrow} \mathbf{D}^{0}(\mathds{P}(V), A) \tag{4.15} \\
											\varphi			&\longmapsto \tilde{\varphi} \nonumber
\end{align}
is. Here the distribution $\tilde{\varphi}$ evaluates on $\mathds{P}(V)(e)$ as $\varphi(e)$ whenever $e$ is an arrow of $\mathcal{BT}$ of type 1 and $\mathds{P}(V)(e)$ is the compact-open subset of lines $G$ 
of $V$ such that $e$ points to $G$. \stepcounter{subsection}

We summarize what has been shown.

\begin{Theorem} \label{Theorem.Abstraction-to-arbitrary-abelian-groups}
	Let $A$ be an arbitrary abelian group. Restricting the evaluation of $\varphi \in \mathbf{H}(\mathcal{BT}, A)$ to arrows of $\mathcal{T}_{v_{0}}$ (resp. arrows of type 1 of $\mathcal{BT}$) yields canonical isomorphisms
	\begin{align*}
		\mathbf{H}(\mathcal{BT}, A) 	&\overset{\cong}{\longrightarrow} \mathbf{H}(\mathcal{T}_{v_{0}}, A) 
	\intertext{resp.}
		\mathbf{H}(\mathcal{BT}, A)	&\overset{\cong}{\longrightarrow} \mathbf{D}^{0}(\mathds{P}(V), A).
	\end{align*}
	The second of these is equivariant for the natural actions of $G(K) = \mathrm{GL}(r,K)$ on both sides, while the first isomorphism is equivariant for the actions of the stabilizer $G(O)Z(K)$ of $v_{0} \in G(K)$.
\end{Theorem}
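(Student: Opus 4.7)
The plan is to assemble the theorem from the pieces already established in Sections 4.4--4.14, since the statement is essentially a synthesis of previous results. First, the isomorphism $\mathbf{H}(\mathcal{BT}, \mathds{Z}) \overset{\cong}{\to} \mathbf{H}(\mathcal{T}_{v_0}, \mathds{Z})$ is exactly \eqref{Eq.Canonical-isomorphism-between-H(BT,Z)-and-H(TV0,Z)}, which was obtained by combining Proposition \ref{Proposition.rn-is-injective} (injectivity of the truncated restriction $r_n$, itself a reformulation of Corollary \ref{Corollary.Statement-for-varphi-who-is-0-on-i-special-arrows}) and Lemma \ref{Lemma.rn-is-surjective} (surjectivity of $r_n$) and passing to the projective limit via \eqref{Eq.H(BT,Z)-as-projective-limit}. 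The extension to an arbitrary coefficient group $A$ then follows immediately from Proposition 4.12: the modules $\mathbf{H}(n)$ and $\mathbf{H}'(n)$ are finitely generated free $\mathds{Z}$-modules cut out by linear relations, so tensoring commutes with the formation of the projective limit, and the isomorphisms persist after $-\otimes A$.

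Second, the isomorphism $\mathbf{H}(\mathcal{T}_{v_0}, A) \overset{\cong}{\to} \mathbf{D}^0(\mathds{P}(V^\wedge), A)$ is \eqref{Eq.Isomorphism-of-D0(PVDach,A)-and-H(Tv0,A)}, which rests on the identification from \ref{subsubsection.Identification-of-arrows-with-special-Grassmannians}: the sets $\mathds{P}(V^\wedge)(e) = \mathrm{Gr}_{K,1}(e)$ for $e \in \mathbf{A}^+(\mathcal{T}_{v_0})$ form a basis of compact-open subsets for $\mathds{P}(V^\wedge)$, and the additivity of a distribution on finite disjoint unions is exactly the flow condition \eqref{Eq.Flow-condition}, while the total-mass-zero requirement is condition (4.6.5)($v_0$). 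Composing with the first isomorphism gives the second isomorphism in the theorem.

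The equivariance assertions are checked directly on the formulas. For the first isomorphism, restriction to $\mathbf{A}(\mathcal{T}_{v_0})$ only makes sense relative to the choice of basepoint $v_0$, so only elements $\gamma$ with $\gamma v_0 = v_0$, i.e., $\gamma \in \mathrm{G}(O)Z(K)$, can preserve $\mathcal{T}_{v_0}$; for such $\gamma$ the map is tautologically equivariant. For the second isomorphism, one uses that the type-$1$ arrows of $\mathcal{BT}$ correspond canonically and $\mathrm{G}(K)$-equivariantly to pairs consisting of a vertex and a hyperplane in the associated residual projective space, and that the open sets $\mathds{P}(V^\wedge)(e)$ transform under $\gamma$ to $\mathds{P}(V^\wedge)(\gamma e)$ by definition of the dual action on $\mathds{P}(V^\wedge)$; the distribution $\widetilde{\varphi}$ attached to $\varphi$ therefore satisfies $\widetilde{\gamma \varphi} = \gamma \widetilde{\varphi}$.

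The tensor-product statement at the end is precisely Proposition 4.12 applied to each of the three modules. The main work has already been absorbed into earlier results; the only genuinely non-formal step is keeping the $\mathrm{G}(K)$-action straight in the distribution picture, and here the key point to verify carefully is that the bijection between type-$1$ arrows pointing to $H$ and the compact-open set $\mathds{P}(V^\wedge)(e)$ intertwines the right action of $\mathrm{G}(K)$ on $\mathbf{A}(\mathcal{BT})$ with the natural action on hyperplanes—this is immediate from the definition $\tau_U([L]) = [(L \cap U) + \pi L]$ and the compatibility $\tau_{U\gamma^{-1}}([L\gamma^{-1}]) = \tau_U([L])\gamma^{-1}$.
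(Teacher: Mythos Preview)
Your proposal is correct and follows essentially the same approach as the paper: the theorem is explicitly introduced there with ``We summarize what has been shown,'' and the proof amounts to citing \eqref{Eq.Canonical-isomorphism-between-H(BT,Z)-and-H(TV0,Z)}, Proposition 4.12, \eqref{Eq.Isomorphism-of-D0(PVDach,A)-and-H(Tv0,A)}, and the equivariance discussion around (4.15). Your write-up is slightly more explicit than the paper on the equivariance verification (spelling out the compatibility $\tau_{U\gamma^{-1}}([L\gamma^{-1}]) = \tau_U([L])\gamma^{-1}$), but this is a harmless elaboration rather than a different route.
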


As a direct consequence of the first isomorphism, i.e., of \eqref{Eq.Canonical-isomorphism-between-H(BT,A)-and-H(TV0,A)}, we find the following corollary, which is in keeping with the fact that bounded holomorphic functions on $\Omega$ are constant (see Proposition \ref{Proposition.Holomorphic-function-on-Omega-constant}).

\begin{Corollary}
	If $\varphi \in \mathbf{H}(\mathcal{BT}, A)$ has finite support, it vanishes identically.
\end{Corollary}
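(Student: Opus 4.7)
The plan is to transfer the problem to the tree $\mathcal{T}_{v_{0}}$ and then apply a standard leaf argument. By Theorem \ref{Theorem.Abstraction-to-arbitrary-abelian-groups}, the restriction map $\mathbf{H}(\mathcal{BT}, A) \xrightarrow{\cong} \mathbf{H}(\mathcal{T}_{v_{0}}, A)$ is a bijection, so if $\varphi \in \mathbf{H}(\mathcal{BT}, A)$ has finite support, its image $\varphi'$ in $\mathbf{H}(\mathcal{T}_{v_{0}}, A)$ also has finite support (being the restriction of a finitely supported function), and it suffices to prove that $\varphi' = 0$; injectivity of the isomorphism will then yield $\varphi = 0$.

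Now $\mathcal{T}_{v_{0}}$ is a locally finite tree and $\varphi'$ satisfies the alternating condition \eqref{Eq.Condition-1-H'n} and the vertex-balance condition \ref{Eq.Condition-2-H'n}. Let $S$ be the (finite) set of unoriented $1$-simplices of $\mathcal{T}_{v_{0}}$ on which $\varphi'$ does not vanish; $S$ is well defined because \eqref{Eq.Condition-1-H'n} makes the support symmetric under orientation reversal. Suppose for contradiction that $S \neq \emptyset$. Since $S$ is a finite non-empty subgraph of the tree $\mathcal{T}_{v_{0}}$, it is a non-empty finite forest, and hence has at least one leaf: a vertex $v$ that is incident to exactly one edge $e_{0}$ of $S$. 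All other arrows of $\mathcal{T}_{v_{0}}$ emanating from $v$ carry value $0$ under $\varphi'$ by definition of $S$.

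Applying the harmonicity relation \ref{Eq.Condition-2-H'n} at $v$ we obtain
\[
    0 \;=\; \sum_{\substack{e \in \mathbf{A}(\mathcal{T}_{v_{0}}) \\ o(e)=v}} \varphi'(e) \;=\; \varphi'(e_{0}),
\]
(orienting $e_{0}$ away from $v$), which contradicts $e_{0} \in S$. Therefore $S = \emptyset$, i.e.\ $\varphi' \equiv 0$, and hence $\varphi \equiv 0$.

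There is no real obstacle here: both the reduction to the tree and the leaf argument are essentially immediate once Theorem \ref{Theorem.Abstraction-to-arbitrary-abelian-groups} is in hand. The only thing worth double-checking is that the vertex-balance sum at $v$ is genuinely a finite sum (so that the argument makes sense with values in an arbitrary abelian group $A$), which is guaranteed by local finiteness of $\mathcal{T}_{v_{0}}$.
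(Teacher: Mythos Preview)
Your proof is correct and follows essentially the same route as the paper: both transfer to $\mathcal{T}_{v_{0}}$ via Theorem \ref{Theorem.Abstraction-to-arbitrary-abelian-groups} and then use that a finitely supported harmonic cochain on a tree vanishes. The paper phrases the second step by restricting to the finite subtree $\mathcal{T}_{v_{0}}(n+1)$ and invoking the standard fact for finite trees, whereas you spell out the leaf argument explicitly on the support; these are equivalent.
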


\begin{proof}
	Suppose that $\varphi$ has support in $\mathcal{BT}(n)$ with $n \in \mathds{N}$. Then its restriction to $\mathcal{T}_{v_{0}}(n+1)$ satisfies \eqref{Eq.Condition-1-H'n} and (4.6.5) at all vertices $v$ of 
	$\mathcal{T}_{v_{0}}(n+1)$. As $\mathcal{T}_{v_{0}}(n+1)$ is a finite tree, this forces $\varphi$ to vanish identically on $\mathcal{T}_{v_{0}}(n+1)$, thus on $\mathcal{BT}$.
\end{proof}

\section{Concluding remarks}

\subsection{} Ehud de Shalit in \cite{DeShalit01} Section 3.1 postulated four conditions $\mathfrak{A}, \mathfrak{B}, \mathfrak{C}, \mathfrak{D}$ for what he calls harmonic $k$-cochains on $\mathcal{BT}$. 
These conditions spezialized to $k=1$ are essentially our conditions (A), (B), (C) from \ref{subsection.Conditions-for-varphi-in-A(BT)}. Grosso modo, de Shalit's $\mathfrak{B}$ corresponds to (B), $\mathfrak{C}$ to 
(C) and $\mathfrak{D}$ to (A), while $\mathfrak{A}$ is a special case of (A).

\subsection{} In fact, the relationship with de Shalit's work is as follows. Suppose that $\mathrm{char}(K) = 0$, and consider the diagram
\begin{equation} \label{Eq.Relation-with-de-Shalits-work-and-this-article-for-char-K-0}
	\begin{tikzcd}
		u \ar[d, mapsto] 							& \mathcal{O}(\Omega)^{*} \ar[r, "P"]	\ar[d]																										& \mathbf{H}(\mathcal{BT}, \mathds{Z}) \ar[d, hook] \\
		d\log u = u^{-1}du				& \left\{ \begin{array}{l} \text{closed $1$-forms} \\ \text{on $\Omega$}\end{array} \right\} \ar[r, "\mathrm{res}"]	& \mathbf{H}(\mathcal{BT}, K) ~(= C_{\mathrm{har}}^{1} \text{ of \cite{DeShalit01}}),
	\end{tikzcd}
\end{equation}
where \enquote{$\mathrm{res}$} is de Shalit's residue mapping. Its commutativity follows for $u = \ell_{H,H'}$ from Corollary 7.6 and Theorem 8.2 of \cite{DeShalit01} (along with the explanations given there, and 
our description of $P(u)$), and may be verified for general $u$ by approximating. Hence the van der Put transform $P$ yields a concrete description of the residue mapping on logarithmic 1-forms. On the other hand, 
in characteristic $p$ the van der Put transform is finer than \enquote{$\mathop{d\log}$}, as the latter kills all $p$-powers.

\subsection{} Now suppose that $\mathrm{char}(K) = p > 0$, and that moreover $r=2$. Then $\mathcal{BT}$ is the Bruhat-Tits tree $\mathcal{T}$, and the residue mapping 
\[
	\mathrm{res} \colon \{ \text{1-forms on $\Omega = \Omega^{2}$}\} \longrightarrow \mathbf{H}(\mathcal{T}, C)
\]
(see \cite{Gekeler96} 1.8) is such that the diagram analogous with \eqref{Eq.Relation-with-de-Shalits-work-and-this-article-for-char-K-0}
\begin{equation}
	\begin{tikzcd}
		u \ar[d, mapsto]	& \mathcal{O}(\Omega)^{*} \ar[r, "P"]	\ar[d]							& \mathbf{H}(\mathcal{T}, \mathds{Z}) \ar[d]	\\
		d\log u	& \{\text{1-forms on $\Omega$}\} \ar[r, "\mathrm{res}"] 	&	\mathbf{H}(\mathcal{T}, C)
	\end{tikzcd}
\end{equation}
commutes, with remarkable arithmetic consequences (loc. cit., Sections 6 and 7). A similar residue map for $r>2$ unfortunately lacks so far. In any case, we should regard $P$ as a substitute for the logarithmic 
derivation operator
\[
	u \longmapsto d\log u = u^{-1} du
\] 
in characteristic 0.

\subsection{} In \cite{SchneiderStuhler91}, Peter Schneider and Ulrich Stuhler described the cohomology $H^{*}(\Omega, A)$ of $\Omega = \Omega^{r}$ with respect to an abstract cohomology theory, where 
$A = H^{0}(\mathrm{Sp}(K))$. That theory is required to satisfy four natural axioms, loc. cit., Section 2. As they explain, these axioms are fulfilled at least
\begin{itemize}
	\item for the étale $\ell$-adic cohomology of rigid-analytic spaces over $K$, where $\ell$ is a prime different from $p = \mathrm{char}(\mathds{F})$, and $A = \mathds{Z}_{\ell}$, and
	\item for the de Rham cohomology (where one must moreover assume that $\mathrm{char}(K) = 0$); here $A = K$.
\end{itemize}
Their result is stated loc. cit., Section 3, Theorem 1, which in dimension 1 is (in our notation)
\begin{equation}
	\begin{tikzcd}
		H^{1}(\Omega^{r}, A) \ar[r, "\cong"]	&\mathbf{D}^{0}(\mathds{P}(V^{\wedge}), A).
	\end{tikzcd}
\end{equation}
Theorem 8.2 in \cite{DeShalit01} gives that (in the case where $\mathrm{char}(K) = 0$ and $H^{*} = H_{\mathrm{dR}}^{*}$ is the de Rham cohomology)
\begin{equation}
	\begin{tikzcd}
		H_{\mathrm{dR}}^{k}(\Omega^{r}) \ar[r, "\cong"] & C_{\mathrm{har}}^{k},
	\end{tikzcd}
\end{equation}
where $C_{\mathrm{har}}^{1}$ is our $\mathbf{H}(\mathcal{BT}, K)$. Hence our Theorems \ref{Theorem.van-der-Put-sequence-short-exact-sequence-of-GK-modules} and 
\ref{Theorem.Abstraction-to-arbitrary-abelian-groups} refine the above in the case $k=1$. In \cite{AlonDeShalit02}, 
the authors relate the approaches of \cite{SchneiderStuhler91}, \cite{DeShalit01}, \cite{IovitaSpiess01} to the de Rham cohomology of $\Omega$. Specialized to $k=1$, this gives some more insight 
into our situation. In particular, it is possible to derive the surjectivity of the map $P$ in Theorem 3.11 also with the methods of \cite{AlonDeShalit02}, at least if $\mathrm{char}(K) = 0$.

\subsection{} Let now $\Gamma$ be a discrete subgroup of $G(K)$. The most interesting cases are those where the image of $\Gamma$ in $G(K)/Z(K) = \mathrm{PGL}(r,K)$ has finite covolume with respect
to Haar measure, or is even cocompact. Examples are given as Schottky groups in $\mathrm{PGL}(2,K)$ \cite{GerritzenVanDerPut80} or as arithmetic subgroups of $G(K)$ of different types, when $K$ is the 
completion $k_{\infty}$ of a global field $k$ at a non-archimedean place $\infty$ \cite{Drinfeld74}, \cite{Reiner75}. Then often the quotient analytic space $\Gamma \setminus \Omega$ is the set of $C$-points of an 
algebraic variety \cite{GoldmanIwahori63}, \cite{Drinfeld74}, \cite{Mustafin78}, which may be studied via a spectral sequence relating the cohomologies of $\Omega$ and $\Gamma$ with that of 
$\Gamma \setminus \Omega$ (\cite{SchneiderStuhler91} Section 5). For $r=2$, this essentially boils down to a study of the $\Gamma$-cohomology sequence of 
\eqref{Eq.van-der-Put-Short-exact-sequence-for-general-r} (\cite{Gekeler96} Section 5). But also for $r>2$, \eqref{Eq.van-der-Put-Short-exact-sequence-for-general-r} with its $\Gamma$-action will be useful, 
which is the topic of ongoing work.

\begin{bibdiv}
	\begin{biblist}
		\bib{AlonDeShalit02}{article}{title={On the cohomology of Drinfeld's $p$-adic symmetric domain}, author={Gil Alon and Ehud de Shalit}, journal={Israel Journal of Mathematics}, volume={129}, date={2002}, pages={1--20}}
		\bib{BruhatTits72}{article}{title={Groupes r\'eductifs sur un corps local.}, author={Fran\c cois Bruhat and Jacques Tits}, date={1972}, publisher={Institut des Hautes \'Etudes Scientifiques}, 
		journal={Publications Math\'ematiques de l'IH\'ES}, volume={42}, pages={5--251}}
		\bib{DeShalit01}{article}{title={Residues on buildings and de Rham cohomology of p-adic symmetric domains}, author={Ehud De Shalit}, date={2001}, volume={106}, publisher={Duke University Press}, journal={Duke Mathematical Journal}, pages={123--191}}
		\bib{DeligneHusemoller87}{article}{title={Survey of Drinfel'd modules}, author={Pierre Deligne and Dale Husemoller}, date={1987}, publisher={AMS}, journal={Contemporary Mathematics}, volume={67}, pages={25--91}}
		\bib{Drinfeld74}{article}{title={Elliptic modules (Russian), }, author={Vladimir Gershonovich Drinfel\cprime d}, date={1974}, journal={Mat. Sb. (N.S.)}, volume={94(136)}, pages={594--627.}}
		\bib{FresnelVanDerPut81}{book}{title={G{\'e}om{\'e}trie analytique rigide et applications}, author={Jean Fresnel and Marius van der Put}, date={1981}, publisher={Birkh{\"a}user, Basel}}
		\bib{FresnelVanDerPut04}{book}{author={Jean Fresnel and Marius van der Put}, title={Rigid analytic geometry and its applications}, publisher={Birkhäuser, Boston-Basel-Berlin}, date={2004}}		
		\bib{Gekeler96}{article}{title={Jacobians of Drinfeld modular curves}, author={Ernst{-}Ulrich Gekeler and Marc Reversat}, date={1996}, journal={Journal für die reine und angewandte Mathematik}, volume={476}, pages={27--93}}
		\bib{Gekeler17}{article}{title={On Drinfeld modular forms of higher rank}, author={Ernst{-}Ulrich Gekeler}, date={2017}, journal={Journal de Th\'eorie des Nombres de Bordeaux}, volume={29}, pages={875--902}}
		\bib{GekelerTA}{article}{title={On Drinfeld modular forms of higher rank II}, author={Ernst{-}Ulrich Gekeler}, journal={Journal of Number Theory, to appear}}
		\bib{GerritzenVanDerPut80}{book}{title={Schottky groups and Mumford curves}, author={Lothar Gerritzen and Marius van der Put}, date={1980}, publisher={Springer, Berlin}, series={Lecture Notes in Mathematics}}
		\bib{GoldmanIwahori63}{article}{title={The space of p-adic norms}, author={O. Goldman and N. Iwahori}, date={1963}, publisher={Institut Mittag-Leffler}, journal={Acta Mathematica}, volume={109}, pages={137--177}}
		\bib{IovitaSpiess01}{article}{title={Logarithmic differential forms on p-adic symmetric spaces}, author={Adrian Iovita and Michael Spiess}, date={2001}, publisher={Duke University Press}, journal={Duke Mathematical Journal}, volume={110}, pages={253--278}}
		\bib{Kiehl67}{article}{title={Theorem A und Theorem B in der nichtarchimedischen Funktionentheorie}, author={Reinhardt Kiehl}, date={1967}, journal={Inventiones Mathematicae}, volume={2}, pages={256--273}}
		\bib{Laumon96}{book}{title={Cohomology of Drinfeld Modular Varieties, Part 1, Geometry, Counting of Points and Local Harmonic Analysis}, author={Gérard Laumon}, date={1996}, publisher={Cambridge University Press},series={Cambridge Studies in Advanced Mathematics},}
		\bib{ManinDrinfeld73}{article}{title={Periods of p-adic Schottky groups}, author={Yuri Manin and Vladimir Gershonovich Drinfeld}, date={1973}, journal={Journal für die reine und angewandte Mathematik}, volume={0262\_0263}, pages={239--247}}
		\bib{Mustafin78}{article}{title={Non-Archimedean uniformization (Russian)}, author={G. A. Mustafin}, date={1978}, publisher={}, journal={Mat. Sb. (N.S.)}, volume={105 (147)}, pages={207--237}}
		\bib{Reiner75}{book}{title={Maximal orders}, author={Irving Reiner}, date={1975}, publisher={Academic Press, London-New York}, series={London Mathematical Society monographs, No. 5}}
		\bib{SchneiderStuhler91}{article}{title={The cohomology of p-adic symmetric spaces}, author={Peter Schneider and Ulrich Stuhler}, date={1991}, publisher={}, journal={Inventiones Mathematicae}, volume={105}, pages={47--122}}
		\bib{VanDerPut8182}{article}{title={Les fonctions thêta d'une courbe de Mumford}, author={Marius van der Put}, date={1981/82}, publisher={Secr\'etariat math\'ematique}, journal={Groupe de travail d'analyse ultram\'etrique}, volume={9}, pages={1-12, Institut Henri Poincaré, Paris, 1983}}
		
	\end{biblist}
\end{bibdiv}

\end{document}